\journal{xxx}
\newtheorem{theorem}{Theorem}[section]
\newtheorem{lemma}[theorem]{Lemma}
\newtheorem{corollary}[theorem]{Corollary}
\newtheorem{definition}[theorem]{Definition}
\newtheorem{remark}[theorem]{Remark}
\newcommand{\Z}{\ensuremath{\mathbb{Z}}}
\begin{document}

\begin{frontmatter}

\title{The relative James construction and its application to homotopy groups}

\author{Zhongjian Zhu}
\ead{20160118@wzu.edu.cn}
\author{Tian Jin}
\ead{lesuishenzhizhao@aliyun.com}
\address{College of Mathematics and Physics, Wenzhou University, Wenzhou 325035, China}

\begin{abstract}
In this paper, we develop the new method to compute the homotopy groups of the mapping cone $C_f=Y\cup_{f}CX$ beyond the metastable range by analysing the homotopy of  the $n$-th filtration of the relative James construction $J(X,A)$ for CW-pair $A\stackrel{i}\hookrightarrow X$, defined by B. Gray,  which is homotopy equivalent to the homotopy fiber of the pinch map $X\cup_{i}CA\rightarrow \Sigma A$. 
As an application,  we compute the 5 and 6-dim  unstable  homotopy groups of  3-dimensional  mod $2^r$ Moore spaces for all positive integers $r$.
\end{abstract}

\begin{keyword}
Relative James construction\sep higher order Whitehead product\sep homotopy group; Moore space

\MSC 55P10\sep55Q05\sep55Q15\sep 55Q52
\end{keyword}
\end{frontmatter}

\section{Introduction}
\label{intro}

Calculating the unstable homotopy groups of finite CW-complexes is a fundamental and difficult problem in algebraic topology. A lot of related work has been done on  mod $2^r$ Moore spaces when $r=1,2,3$. Let $P^{n}(p^r)$ denote  $n$-dimensional  elementary Moore spaces, whose only nontrivial reduced homology is  $\overline{H}_{n-1}(P^{n}(p^r))=\Z_{p^r}$ ($\Z_{k}:=\Z/k\Z$). J. Mukai computed  $\pi_{k}(P^{n}(2))$ for some $k\leq 3n-5$ and $3\leq n\leq 7$ in \cite{Mukai2,Mukai3}. In 1999, J.Mukai and T.Shinpo computed the  $\pi_{k} (P^{n}(4))$ in the range $k=2n-3,2n-4$ and $n\leq 24$ \cite{Mukai};  In 2007, X.G.Liu computed  $\pi_{k}(P^{n}(8)) $  in the range $k=2n-3$ for $2\leq n\leq 20$ and $k=2n-2$ for $3\leq n\leq 7$ \cite{X.G.Liu}. All the  homotopy groups are obtained above by applying  the  Theorem 2.1 of \cite{James I M} to the  homotopy exact sequences of a pair $(P^{n}(2^r), S^{n-1})$. These homotopy groups are metastable  (the homotopy group $\pi_{k}(X)$ of an $(n-1)$-connected CW-complex $X$ is called stable if $k\leq 2n-2$  and metastable if $k\leq 3n-2$. In the metastable range,  the exact EHP sequence holds). In 2001, J. Mukai computed the $\pi_{5}(P^{3}(2))$  and $\pi_{k}(P^{5}(2))$ with $10\leq k\leq 13$, which are beyond the metastable range. The main method is using the fiber sequence $\dots \Omega\Sigma X\rightarrow F\rightarrow Y\bigcup_{f}CX\xrightarrow{pinch} \Sigma X$, where $ Y\bigcup_{f}CX$ is the mapping cone of the map $f$, which is also denoted by $C_f$.
In 1972, B.Gray constructed the relative James construction $J(X,A)$ for a CW-pair $(X,A)$ and inclusion $A\stackrel{i}\hookrightarrow X$, which is filtrated by $J_{r}(X,A) (r=1,2,\dots)$  and proved that $J(X,A)$ is homotopy equivalent to the homotopy fiber of the pinch map $X\cup_{i}CA\rightarrow \Sigma A$ \cite{Gray}.
So the homotopy type of the above homotpy fiber  $F$ is homotopy equivalent to Gray's relative James construction $J(M_f,X)$ which is filtrated by a sequence of subspace $J_r(M_f,X), r=1,2,\dots$, where $M_f$ is the mapping  cylinder  of $f$.  When $X=\Sigma X'$, $Y=\Sigma Y'$, the  second filtration $J_2(M_f,X)$ of $J(M_f,X)$ has the homotopy type $Y\cup_{\gamma_2}C(Y \wedge X')$,   where $\gamma=[id_Y, f]$ is the generalized Whitehead product. This enables use to compute the homotopy groups of the mapping cones in metastable range \cite{JXYang, ZP A_3^2}. In general case, we don't know the homotopy type of the third filtration  $J_3(M_f,X)$ of $J(M_f,X)$. However, for the special cases $P^{k}(2)$, $k=3,5$, considered by Mukai,  the third filtration  $J_3(M_f,X)$  have the homotopy type $S^{k-1}\cup_{\gamma_2}e^{2k-2}\cup_{\gamma_{3}}e^{3k-3}$ and the informations of the attaching map $\gamma_3$ are given by I.M.James' note on cup products of this CW-complex (Theorem 3.3 of \cite{James cup-prod}), which provide the possibility to get the homotopy groups of these two Moore spaces beyond the metastable range. In 2003, Jie Wu also got a lot of unstable homotopy groups $\pi_{k}(P^{n}(2))(n\geq 3)$ by the functorial decomposition of the loop suspension space and the algebraic representation theory \cite{WJ Proj plane}.

In this paper,  we will get some information of homotopy type of the filtration $J_r(X,A)$ of the Gray's relative James construction $J(X,A)$, when $A=\Sigma A',X=\Sigma X'$ are suspensions.  Then $J_r(X,A)$ is the cofiber of the attaching map  $\gamma_r: \Sigma^{r-1}X'\wedge A'^{\wedge (r-1)}\rightarrow J_{r-1}(X,A)$.  We show that the  attaching map of $\gamma_r$ is an element of the set of $r$-th order Whitehead products defined by G.J.Porter \cite{Porter}. So we get the following Main Theorem for the homotopy of the homotopy fiber of the pinch map $ Y\bigcup_{f}CX\xrightarrow{pinch} \Sigma X$, where $X,Y$ are suspensions.

\begin{theorem}[\textbf{Main Theorem}]\label{thm 1.1}
	Let $X\xrightarrow{f}Y$ be a map of simply connected CW complexes $X=\Sigma X'$, $Y=\Sigma Y'$, then the homotopy fiber of the pinch map  $ C_{f}\xrightarrow{pinch} \Sigma X$
	has the homotopy type  
	\begin{align*}
		Y\cup_{\gamma_2}C(\Sigma Y'\wedge X')\cup_{\gamma_3}\cdots \cup_{\gamma_n}C(\Sigma^{n-1} Y'\wedge X'^{\wedge (n-1)})\cup_{\gamma_{n+1}}\cdots
	\end{align*}
	where $\gamma_r$ is an element in the set of $r$-th order Whitehead products  $[j^{r-1}_{Y}, j^{r-1}_{Y}f, \cdots, j^{r-1}_{Y}f]$ and $j^{r-1}_{Y}: Y\hookrightarrow Y\cup_{\gamma_2}C(\Sigma Y'\wedge X')\cup_{\gamma_3}\cdots \cup_{\gamma_{r-1}}C(\Sigma^{r-2} Y'\wedge X'^{\wedge (r-2)})$ is the canonical inclusion for $r=2,3,\dots$.
\end{theorem}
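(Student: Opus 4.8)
The plan is to run everything through B.~Gray's identification of the homotopy fibre with the relative James construction, and then to read the homotopy type of each filtration stage off its word model, exploiting that all spaces in sight are suspensions. Concretely, replacing $f\colon X\to Y$ by the inclusion $i\colon X\hookrightarrow M_f$ into the mapping cylinder, we have $C_f\simeq M_f\cup_i CX$ and the pinch map $C_f\to\Sigma X$ collapses $M_f\simeq Y$; by the theorem of Gray quoted in the introduction its homotopy fibre is homotopy equivalent to $J(M_f,X)=\operatorname{colim}_n J_n(M_f,X)$. It therefore suffices to determine, by induction on $n$, the homotopy type of $J_n(M_f,X)$ together with the inclusion $J_{n-1}(M_f,X)\hookrightarrow J_n(M_f,X)$, and then pass to the colimit; the base case is $J_1(M_f,X)=M_f\simeq Y=\Sigma Y'$ with $j^1_Y=\mathrm{id}_Y$.

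The inductive step is to present a single filtration step as a single cone attachment. From Gray's word description of $J_n(X,A)$, the passage from $J_{n-1}(M_f,X)$ to $J_n(M_f,X)$ is, up to homotopy, the pushout
\[
J_{n-1}(M_f,X)\;\xleftarrow{\ \phi_n\ }\;T_n(M_f,X,\dots,X)\;\hookrightarrow\;M_f\times X^{\times(n-1)},
\]
where $T_n$ is Porter's fat wedge and $\phi_n$ deletes the letter lying at the basepoint. Choosing a cofibration-compatible equivalence $M_f\simeq\Sigma Y'$ (note $X=\Sigma X'$ already), homotopy invariance of products and of the fat wedge replaces this by the pushout along $T_n(\Sigma Y',\Sigma X',\dots,\Sigma X')\hookrightarrow\Sigma Y'\times(\Sigma X')^{\times(n-1)}$. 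That pair has cofibre $\Sigma Y'\wedge(\Sigma X')^{\wedge(n-1)}=\Sigma^{n}\!\bigl(Y'\wedge X'^{\wedge(n-1)}\bigr)$ and characteristic map the cone on the universal higher Whitehead element $w_n\colon\Sigma^{n-1}Y'\wedge X'^{\wedge(n-1)}\to T_n(\Sigma Y',\Sigma X',\dots,\Sigma X')$, so the pushout is $J_{n-1}(M_f,X)\cup_{\gamma_n}C\bigl(\Sigma^{n-1}Y'\wedge X'^{\wedge(n-1)}\bigr)$ with $\gamma_n=\phi_n\circ w_n$.

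It then remains to identify $\gamma_n$ with a Porter higher Whitehead product and finish by a colimit. Restricting $\phi_n$ to the first leg $\Sigma Y'\times *\times\cdots\times *$ of the fat wedge gives $\Sigma Y'\simeq M_f\hookrightarrow J_{n-1}(M_f,X)$, which by the inductive identification is the canonical inclusion $j^{n-1}_Y$; restricting to the $k$-th leg $*\times\cdots\times\Sigma X'\times\cdots\times *$ gives $\Sigma X'=X\hookrightarrow M_f\hookrightarrow J_{n-1}(M_f,X)$, which is $j^{n-1}_Y f$ because $X\hookrightarrow M_f$ realizes $f$. Thus $\phi_n$ is an extension over $T_n(\Sigma Y',\Sigma X',\dots,\Sigma X')$ of $j^{n-1}_Y\vee j^{n-1}_Y f\vee\cdots\vee j^{n-1}_Y f$ ($n$ summands), whence by Porter's definition $\gamma_n=\phi_n\circ w_n$ lies in the $n$-th order Whitehead product set $[\,j^{n-1}_Y,j^{n-1}_Y f,\dots,j^{n-1}_Y f\,]$. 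For $n=2$ this recovers $\gamma_2=[\mathrm{id}_Y,f]$, consistent with the known second filtration, and the colimit over $n$ yields the asserted homotopy type of the homotopy fibre.

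The step I expect to be the main obstacle is the pushout presentation: one must check that Gray's word model really produces exactly the displayed square, i.e. that the genuinely length-$n$ stratum is the honest product $M_f\times X^{\times(n-1)}$ glued to $J_{n-1}$ along the fat wedge by letter deletion, with no extra shuffle or diagonal identifications that would alter $\gamma_n$ by terms of lower filtration. If such correction terms do occur, one has to verify that they lie in the indeterminacy of the $n$-th order Whitehead product, so that the membership $\gamma_n\in[\,j^{n-1}_Y,j^{n-1}_Y f,\dots,j^{n-1}_Y f\,]$ is unaffected. Subsidiary care is needed to make the equivalence $M_f\simeq\Sigma Y'$ compatible with the inclusion $X\subset M_f$ and with the fat-wedge functor, and to keep straight the single suspension relating the top cell of $\prod\Sigma(-)$ to $w_n$.
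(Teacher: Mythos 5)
Your proposal is correct and follows essentially the same route as the paper: Gray's identification of the fibre with $J(M_f,X)$, the fat-wedge pushout presentation of each filtration step, Porter's cofibration $\Sigma^{n-1}Y'\wedge X'^{\wedge(n-1)}\xrightarrow{W_n}T_1\to\prod$ to read off the attaching map, and naturality of higher Whitehead products to place $\gamma_n$ in $[\,j^{n-1}_Y,j^{n-1}_Yf,\dots,j^{n-1}_Yf\,]$. The only organizational difference is that the paper makes your ``cofibration-compatible equivalence $M_f\simeq\Sigma Y'$'' precise by building an explicit model $J^f_n(Y,X)$ as a quotient of $Y\times X^{n-1}$ and proving $J_n(M_f,X)\simeq J^f_n(Y,X)$ inductively via a comparison of cofibration ladders, which disposes of exactly the compatibility issues you flag at the end.
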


As an application,  we will use this to compute the homotopy groups $\pi_{k}(P^{3}(2^r))$ ($k=5,6$) for all positive integers $r$.

\begin{theorem}\label{thm1.2}
	\begin{align*}
		&\pi_{5}(P^{3}(2^r))\cong
		\left\{
		\begin{array}{ll}
			\Z_2\oplus\Z_2\oplus\Z_2 , & \hbox{$r=1$;} \\
			\Z_2\oplus\Z_2\oplus\Z_2\oplus\Z_{2^r}, & \hbox{$r\geq 2$.}
		\end{array}
		\right.  \\
		&\pi_6(P^{3}(2^r))\cong
		\left\{
		\begin{array}{ll}
			\Z_2\oplus \Z_2\oplus \Z_2\oplus \Z_2\oplus \Z_2, & \hbox{$r=1$;} \\
			\Z_2\oplus\Z_2\oplus \Z_4\oplus \Z_4\oplus \Z_{2^{r}} , & \hbox{$r\geq 2$.}
		\end{array}
		\right.
	\end{align*}
	
\end{theorem}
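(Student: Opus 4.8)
\noindent\emph{Proof of Theorem \ref{thm1.2} (plan).}
Write $f=2^{r}\iota_{2}\colon S^{2}\to S^{2}$, so that $P^{3}(2^{r})=C_{f}$ with $X=Y=S^{2}=\Sigma S^{1}$. Since $2^{r}$ is a unit away from $2$, the map $f$ is a $p$-local equivalence for every odd prime $p$; hence $C_{f}$ is $p$-locally contractible, every group below is a finite $2$-group, and we may work $2$-locally. By Theorem \ref{thm 1.1} the homotopy fibre of the pinch map $P^{3}(2^{r})\to S^{3}$ is
\[
F\ \simeq\ S^{2}\cup_{\gamma_{2}}e^{4}\cup_{\gamma_{3}}e^{6}\cup_{\gamma_{4}}e^{8}\cup\cdots,
\]
with $\gamma_{2}=[\iota_{2},f]=2^{r}[\iota_{2},\iota_{2}]\in\pi_{3}(S^{2})$ and $\gamma_{3}$ an element of the third-order Whitehead product set $[j_{Y}^{2},j_{Y}^{2}f,j_{Y}^{2}f]\subseteq\pi_{5}(J_{2})$, where $J_{2}:=S^{2}\cup_{\gamma_{2}}e^{4}$. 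As the remaining cells are attached in dimensions $\ge 8$, one has $\pi_{k}(F)=\pi_{k}(J_{3})$ for $k\le 6$, where $J_{3}:=S^{2}\cup_{\gamma_{2}}e^{4}\cup_{\gamma_{3}}e^{6}$.

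\emph{Computing $\pi_{k}(F)$ for $k=4,5,6$.} From $[\iota_{2},\iota_{2}]=\pm2\eta$ one gets $\gamma_{2}=\pm2^{r+1}\eta$, hence $x_{2}^{2}=\pm2^{r+1}x_{4}$ in $H^{4}(J_{2};\Z)$ and $Sq^{2}=0$ mod $2$. The homotopy exact sequence of the pair $(J_{2},S^{2})$ — using the low-dimensional $\pi_{*}(S^{2})$ and $\pi_{*}(S^{4})$, the Blakers--Massey identification $\pi_{*}(J_{2},S^{2})\cong\pi_{*}(S^{4})$ (whose only correction in this range, in degree $5$, is carried by the relative Whitehead product $[\iota_{2},\kappa]$, $\kappa$ the characteristic class of $e^{4}$), and the boundary acting by left composition with $\gamma_{2}$ — yields $\pi_{4}(J_{2})$, $\pi_{5}(J_{2})$ and $\pi_{6}(J_{2})$; in particular $\pi_{5}(J_{2})$ has a free summand $\Z$ and $\pi_{3}(J_{2})\cong\Z_{2^{r+1}}$. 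Next one must pin down $\gamma_{3}$: exactly as in Mukai's treatment of $P^{3}(2)$, Porter's description \cite{Porter} of the third-order Whitehead product together with I.~M.~James' cup-product theorem (Theorem 3.3 of \cite{James cup-prod}) applied to $J_{3}$ ties $\gamma_{3}$ to the products $x_{2}x_{4}$ and $x_{2}^{3}$ in $H^{6}(J_{3})$; through $x_{2}^{2}=\pm2^{r+1}x_{4}$ this shows that $\gamma_{3}$ is $2$-divisible in the free part of $\pi_{5}(J_{2})$ in an $r$-dependent way, which is precisely the source of the $\Z_{2^{r}}$-summands that appear for $r\ge2$. Finally $\pi_{5}(F)=\pi_{5}(J_{2})/\langle\gamma_{3}\rangle$ and $\pi_{6}(F)=\pi_{6}(J_{3})$ follow from the exact sequence of $(J_{3},J_{2})$ (again with a single Blakers--Massey correction, now in degree $7$).

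\emph{The long exact sequence.} Insert the above into
\[
\cdots\to\pi_{n+1}(S^{3})\xrightarrow{\partial}\pi_{n}(F)\xrightarrow{i_{*}}\pi_{n}\bigl(P^{3}(2^{r})\bigr)\xrightarrow{p_{*}}\pi_{n}(S^{3})\xrightarrow{\partial}\pi_{n-1}(F)\to\cdots.
\]
The connecting map is induced by the fibration boundary $\delta\colon\Omega S^{3}=\Omega\Sigma S^{2}\to F$; on the bottom cell $S^{2}\hookrightarrow\Omega S^{3}$ this is $\pm2^{r}$ times the inclusion of the bottom $S^{2}$ of $F$, because $i\circ\delta\simeq\ast$ while $\pi_{2}(F)=\Z\twoheadrightarrow\pi_{2}(P^{3}(2^{r}))=\Z_{2^{r}}$ forces $\partial\colon\pi_{3}(S^{3})\to\pi_{2}(F)$ to have image $2^{r}\Z$. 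Consequently $\partial$ factors through $(2^{r}\iota_{2})_{*}$ on all suspension classes and therefore kills everything in $\pi_{*}(S^{3})$ coming from $\pi_{*}(S^{2})$ by suspension; using the James filtration of $\Omega S^{3}$ (cells in dimensions $0,2,4,6,\dots$) together with an obstruction-theory analysis of $\delta$ on the $4$- and $6$-cells, one evaluates $\partial$ on $\pi_{5}(S^{3})=\Z_{2}$, $\pi_{6}(S^{3})_{(2)}=\Z_{4}$ and $\pi_{7}(S^{3})=\Z_{2}$. This gives, for $n=5,6$, short exact sequences
\[
0\to\operatorname{coker}\bigl(\partial\colon\pi_{n+1}(S^{3})\to\pi_{n}(F)\bigr)\to\pi_{n}\bigl(P^{3}(2^{r})\bigr)\to\ker\bigl(\partial\colon\pi_{n}(S^{3})\to\pi_{n-1}(F)\bigr)\to 0 ,
\]
and it remains to settle the two extension problems — in particular whether $\pi_{6}$ acquires $\Z_{4}$- rather than $\Z_{2}\oplus\Z_{2}$-summands for $r\ge2$ — which is done by an $\eta$-multiplication argument, or equivalently by a direct cell-level computation on $P^{3}(2^{r})$. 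Separating the cases $r=1$ and $r\ge2$ yields the stated groups, and the values for $r=1$ recover the known ones.

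\emph{Main obstacle.} The crux is the \emph{joint} determination of $\gamma_{3}$ and of the connecting map $\partial$ on $\pi_{6}(S^{3})$: both require genuinely nonstable information — the third-order Whitehead product, and the behaviour of $\delta$ on the $4$-cell of $\Omega S^{3}$ — lying precisely outside the metastable range, and it is here that the dichotomy $r=1$ versus $r\ge2$ is decided; this is already visible in $\pi_{3}(J_{2})\cong\Z_{2^{r+1}}$, where the image of the Whitehead square $[\iota_{2},\iota_{2}]$ has order $2^{r}$, hence order $2$ exactly when $r=1$, and this propagates through the entire calculation. A secondary, purely bookkeeping difficulty is tracking the Blakers--Massey corrections and the iterated Whitehead products in the exact sequences of $(J_{2},S^{2})$ and $(J_{3},J_{2})$.
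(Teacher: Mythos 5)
Your overall strategy coincides with the paper's: identify the fibre $F$ of the pinch map with the relative James construction, read off $\gamma_2=\pm2^{r+1}\eta$, and feed $\pi_5(F),\pi_6(F)$ into the long exact sequence of the fibration. But as written this is a plan, and the steps you defer are exactly the ones where the theorem is decided; two of them have genuine gaps. First, the determination of $\gamma_3$. You propose to read it off from James' cup-product theorem via $x_2x_4$ and $x_2^3$. Cohomology of $J_3$ can at best constrain the coefficient of the ``degree-one'' generator $\beta_{r+1}\in\pi_5(J_2)\cong\Z_{(2)}\{\beta_{r+1}\}\oplus\Z_2\oplus\Z_2$; it is blind to the two torsion summands, and showing those coefficients vanish is essential (the paper kills them by the fact that higher Whitehead products suspend to zero, together with an explicit computation of $\Sigma$ on $\pi_5(L^4_{r+1})$). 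Even for the free coefficient, the paper does not use cup products at all: it maps $J_2=L^4_{r+1}\to\mathbb{C}P^2$, uses naturality of the triple Whitehead product, Porter's evaluation $[j^L_0,j^L_0,j^L_0]=\pm6\beta_0$ in $\pi_5(\mathbb{C}P^2)$, and the coset description of $[f_1,f_2,f_3]$ (with $\pi_4(\mathbb{C}P^2)=0$ killing the indeterminacy) to get $\gamma_3=\pm3\cdot2^r\beta_{r+1}$ exactly. Without that precise value you cannot compute $\pi_5(F)=\pi_5(J_2)/\langle\gamma_3\rangle$, nor the degree-$7$ correction term for $\pi_6(J_3)$, which the paper identifies as composition with $[j^L_{r+1},\gamma_3]=\pm3\cdot2^rJ^{LF}_{r+1}[j_1^2,j_2^5]$ via a further relative James construction on $S^5\xrightarrow{\gamma_3}J_2\to J_3$.

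Second, the connecting map and the extensions. Your argument that $\partial$ ``kills everything coming from $\pi_*(S^2)$ by suspension'' is correct but does not touch $\nu'\in\pi_6(S^3)=\Z_4\{\nu'\}$, which is not a suspension; and in fact $\partial_{5*}(\nu')=j_{p_1}\eta_2^3\neq0$ for $r=1$ while it vanishes for $r\ge2$ (the paper gets this from Mukai's $r=1$ computation pushed forward along the comparison map $F_{p_1}\to F_{p_r}$, which multiplies the bottom cell by $2^{r-1}$). This, not $\gamma_3$, is where the $r=1$ versus $r\ge2$ dichotomy in $\pi_5$ is decided, so locating the dichotomy in ``the image of $[\iota_2,\iota_2]$ in $\pi_3(J_2)$'' misses the actual mechanism. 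Finally, the extension problem in $\pi_6$ for $r\ge2$ requires producing an order-$4$ lift of $\nu'$ to $\pi_6(P^3(2^r))$; this is delicate because Morisugi--Mukai proved no lift exists at all for $r=1$. The paper constructs it from the Toda bracket $\{\tilde\eta_3,2\iota_4,\eta_4\}_1$ in $P^3(4)$ and transports it to $P^3(2^r)$; an unspecified ``$\eta$-multiplication argument'' does not substitute for this. Until $\gamma_3$, $\partial_{5*}(\nu')$, and the order-$4$ lift are actually pinned down, the proof is not complete.
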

Recently, the research on the suspension homotopy of manifolds becomes a popular topic
\cite{So6dim,R.Huang 2,R.Huang Li,LiZhu,Theriault}, however the homology groups of  manifolds considered  by them   have no 2-torsion. And the known  unstable homotopy groups of mod $p^r$ ($p$ is odd prime) Moore spaces are very important in their methods. So the above results of unstable homotopy groups of mod $2^r$ Moore spaces have a potential application, that is to classify the homotopy types of the suspension of non-simpliy connected manifolds whose homology groups are allowed to have 2-torsion.

The paper is arranged as follows. Section \ref{sec: 2} introduces some concepts and properties of higher order Whitehead products;   In Section \ref{sec: 3} we introduce some concepts and lemmas about Relative James construction and  prove our Main Theorem \ref{thm 1.1} to give the methods to compute the unstable homotopy groups of a mapping cone.  In the last Section,  we apply the methods given in Section \ref{sec: 3} to compute the unstable homotopy groups of Moore spaces given in Theorem \ref{thm1.2}.

\section{Higher order Whitehead products }
\label{sec: 2}
In this paper,  all spaces and maps are in the category of pointed topological spaces and maps (i.e. continuous functions) preserving base point. And we always use $*$ and $0$ to denote the basepoint and the constant map mapping to the basepoint respectively. Without special mention, all spaces are CW-complexes and all the space pairs are CW-pairs. We denote $A\hookrightarrow X$  as an inclusion map.

By abuse of notion, we will not distinguish the notions between a map and its homotopy class in many cases.

Let $T_{r}(X_1,X_2,\dots,X_n)$ be the subset of the Cartesian product $X_1\times X_2\times\dots\times X_n$, consisting of those $n$-tuples with at least $r$ co-ordinates at a base point. Thus  $T_{n-1}(X_1,X_2,\dots,X_n)=X_1\vee X_2\vee\dots\vee X_n$, $T_{1}(X_1,X_2,\dots,X_n)$ is the  ``fat wedge", and  $X_1\times X_2\times\dots\times X_n/T_{1}(X_1,X_2,\dots,X_n)=X_1\wedge X_2\wedge\dots\wedge X_n$. From Theorem 1.2 and Theorem 2.1 of \cite{Porter},
there is a principle cofibration
\begin{align}
	\Sigma^{n-1} X_1\wedge \dots\wedge  X_n \xrightarrow{W_n} T_{1}(\Sigma X_1,\dots,\Sigma  X_n)\rightarrow \Sigma X_1\times \dots\times\Sigma X_n
	\label{Cof for Cartesian Pordu.}
\end{align}
where the map $W_n$ is natural.

Given a map $f: T_{1}(\Sigma X_1,\dots,\Sigma  X_n)\rightarrow X$, $n\geq 2$, define
\begin{align}
	W(f):=f_{\ast}(W_n)=fW_n\in [\Sigma^{n-1} X_1\wedge \dots\wedge  X_n, X]
	\nonumber
\end{align}
the $n$-th order Whitehead product, which depends only upon the homotopy class of $f$ \cite{Porter}.

Let $k_j:\Sigma X_j \hookrightarrow T_{1}(\Sigma X_1,\dots,\Sigma  X_n), j=1,2,\dots, n$, be the canonical injections.
\begin{definition}
	The set of $n$-th order Whitehead products of $f_j:\Sigma X_j\rightarrow X$, $j=1,\dots,n$, is
	\begin{align}
		[f_1,\dots, f_n]:=\{W(f)|f: T_{1}(\Sigma X_1,\dots,\Sigma  X_n)\rightarrow X, fk_j\simeq f_j, j=1,\dots,n\}.
		\nonumber
	\end{align}
\end{definition}
\begin{remark}\label{Remark of Def HOWP}
	1) $[f_1,\dots, f_n]$ is a subset of $[\Sigma^{n-1} X_1\wedge \dots\wedge  X_n, X]$, and it is perhaps empty. We also have
	$ [f_1,\dots, f_n]:=\{W(f):=fW_n~|~f: T_{1}(\Sigma X_1,\dots,\Sigma  X_n)\rightarrow X\}$ for all $f$ extending the wedge sum map $(f_1,\dots,f_n): \Sigma X_1\vee \dots\vee \Sigma X_n\rightarrow X$ up to homotopy.
	
	2)$[f_1,\dots, f_n]$ depends only upon the homotopy classes of $f_i$ $(i=1,\dots,n)$;
	
	3) Hardie\cite{Hardie} has also given the definition of $[f_1,\dots, f_n]$ when all  $X_i$ are spheres (called the $n$-th order spherical Whitehead product). $[f_1,f_2,f_3]$ is, in this case, the Zeeman product studied by Hardie\cite{Hardie Zeeman}. When $X_1, X_2$ are arbitrary, $[f_1,f_2]$ is the ``generalized Whitehead product" studied by Arkowitz\cite{Arkowitz}.
	
	4) From the Theorem 2.5 \cite{Porter},   if $X$ is an H-space, then $[f_1,\dots, f_n]=\{0 \}$; if $\Sigma$ is the suspension homomorphism, then $\Sigma [f_1,\dots, f_n]=\{0\}$.
\end{remark}

The following naturality of higher order Whitehead products comes from Theorem 2.1 of \cite{Porter}.
\begin{theorem}\label{Naturality HOWP}(Naturality)
	Let $f_i:A_i\rightarrow B_i$, $h_i:\Sigma B_i\rightarrow X$, $i=1,\dots,n$, $g:X\rightarrow Y$ and $\varphi: T_1(\Sigma B_1,\dots, \Sigma B_n)\rightarrow X$, then
	\begin{itemize}
		\item [(a)]$(\Sigma^{n-1}(f_1\wedge \dots\wedge f_n ))^{\ast}W(\varphi)=W(\varphi T_1(\Sigma f_1,\dots, \Sigma f_n))$
		\item [(b)] $g_{\ast}W(\varphi)=W(g\varphi)$
		\item [(c)] $(\Sigma^{n-1}(f_1\wedge \dots\wedge f_n ))^{\ast}[h_1,\dots,h_n]\subset [h_1\Sigma f_1,\dots,h_n \Sigma f_n]$
		\item [(d)]$g_\ast[h_1,\dots,h_n]\subset [gh_1,\dots,gh_n]$.
	\end{itemize}
\end{theorem}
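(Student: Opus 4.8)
The plan is to deduce all four parts from one structural fact: the map $W_n$ in the principal cofibration \eqref{Cof for Cartesian Pordu.} is natural, i.e. for any maps $f_i:X_i\to Y_i$ one has
\[
T_1(\Sigma f_1,\dots,\Sigma f_n)\circ W_n\ \simeq\ W_n\circ \Sigma^{n-1}(f_1\wedge\cdots\wedge f_n),
\]
where $W_n$ on the left is the map for the spaces $X_i$ and on the right the one for the $Y_i$. This is exactly the naturality of $W_n$ recorded right after \eqref{Cof for Cartesian Pordu.}, and it is the only real input; it is built into Porter's construction of $W_n$ in \cite{Porter}, since $T_1(-,\dots,-)$ and the iterated smash are functorial and $W_n$ is produced as a natural transformation. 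The one further elementary ingredient I would record is the identity $T_1(\Sigma f_1,\dots,\Sigma f_n)\circ k'_j=k_j\circ\Sigma f_j$ relating the canonical inclusions of the $j$-th wedge summands into the two fat wedges, which is immediate because $T_1$ is a subspace of the Cartesian product and a product of maps restricted to the $j$-th axis is the $j$-th factor.

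Granting these, parts (a) and (b) are pure unwinding of the definition $W(\varphi)=\varphi W_n$. For (a), $W\big(\varphi\, T_1(\Sigma f_1,\dots,\Sigma f_n)\big)=\varphi\, T_1(\Sigma f_1,\dots,\Sigma f_n)\, W_n\simeq \varphi\, W_n\,\Sigma^{n-1}(f_1\wedge\cdots\wedge f_n)=W(\varphi)\circ\Sigma^{n-1}(f_1\wedge\cdots\wedge f_n)$, and the last expression is by definition $(\Sigma^{n-1}(f_1\wedge\cdots\wedge f_n))^{\ast}W(\varphi)$. For (b), $W(g\varphi)=(g\varphi)W_n=g(\varphi W_n)=g_{\ast}W(\varphi)$, with no homotopy even needed.

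Parts (c) and (d) then follow by applying (a) and (b) to a representing map. For (d): if $\alpha\in[h_1,\dots,h_n]$, choose $\varphi$ with $\varphi k_j\simeq h_j$ and $W(\varphi)=\alpha$; then $(g\varphi)k_j=g(\varphi k_j)\simeq gh_j$, so $W(g\varphi)\in[gh_1,\dots,gh_n]$, and by (b) this class is $g_{\ast}\alpha$. For (c): with the same $\varphi$ set $\psi=\varphi\, T_1(\Sigma f_1,\dots,\Sigma f_n):T_1(\Sigma A_1,\dots,\Sigma A_n)\to X$; by the elementary identity above $\psi k'_j=\varphi k_j\,\Sigma f_j\simeq h_j\Sigma f_j$, hence $W(\psi)\in[h_1\Sigma f_1,\dots,h_n\Sigma f_n]$, while by (a) $W(\psi)=(\Sigma^{n-1}(f_1\wedge\cdots\wedge f_n))^{\ast}\alpha$; this gives the stated containment. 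The only step that is not formal is the homotopy-commutativity displayed above, namely the naturality of $W_n$, so in the write-up I would just invoke Theorem~2.1 of \cite{Porter}; a self-contained proof would require revisiting Porter's explicit construction of $W_n$ and checking functoriality at each stage.
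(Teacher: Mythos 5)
Your proposal is correct and takes essentially the same route as the paper, which gives no independent argument but simply imports the statement from Theorem 2.1 of Porter's paper --- i.e.\ from the naturality of $W_n$, which is exactly the single nontrivial input you isolate. Your reduction of (a)--(d) to that naturality plus the identity $T_1(\Sigma f_1,\dots,\Sigma f_n)k'_j=k_j\Sigma f_j$ is the standard formal unwinding and is sound.
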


\begin{corollary}\label{Naturality HOWP equivalence}
	$f_i$, $h_i$, $i=1,\dots,n$, $g$ are from the Theorem \ref{Naturality HOWP}
	\begin{itemize}
		\item [(a)]If $B_j=\Sigma B'_j$ for some $j$, then $k[h_1,\dots,h_j,\dots,h_n]\subset [h_1,\dots,kh_j,\dots h_n]$ for any integer $k$;
	\end{itemize}
	If $f_i:A_i\rightarrow B_i$ and  $g:X\rightarrow Y$ are homotopy equivalences, then
	\begin{itemize}
		\item [(b)] $(\Sigma^{n-1}(f_1\wedge \dots\wedge f_n ))^{\ast}[h_1,\dots,h_n]=[h_1\Sigma f_1,\dots,h_n \Sigma f_n]$;
		\item [(c)]$g_\ast[h_1,\dots,h_n]=[gh_1,\dots,gh_n]$.
	\end{itemize}
\end{corollary}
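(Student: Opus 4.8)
The plan is to derive all three statements from the naturality Theorem~\ref{Naturality HOWP}, together with the functoriality of smash product, suspension and of induced maps on homotopy sets, and with the homotopy invariance of the sets $[f_1,\dots,f_n]$ recorded in Remark~\ref{Remark of Def HOWP}(2). None of the three parts needs new geometric input beyond what the excerpt already provides; the work is entirely in bookkeeping.

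For part (a), write $B_j=\Sigma B'_j$ and let $d_k\colon B_j\to B_j$ be the degree-$k$ self-map of the co-$H$ space $B_j$ (the $k$-fold iterated pinch followed by the fold; for $k\le 0$ one uses the constant map, resp. a homotopy inverse). Two standard facts about co-$H$ spaces are then invoked: first, $\Sigma d_k\simeq k\cdot\mathrm{id}_{\Sigma B_j}$, so that $h_j\circ\Sigma d_k=k h_j$ in the group $[\Sigma B_j,X]$; second, since $B_j$ is a suspension, $\mathrm{id}_{B_1}\wedge\cdots\wedge d_k\wedge\cdots\wedge\mathrm{id}_{B_n}$ represents the degree-$k$ self-map of the smash product $B_1\wedge\cdots\wedge B_n$, so that after applying $\Sigma^{n-1}$ the induced endomorphism $(\Sigma^{n-1}(\mathrm{id}\wedge\cdots\wedge d_k\wedge\cdots\wedge\mathrm{id}))^{*}$ of $[\Sigma^{n-1}B_1\wedge\cdots\wedge B_n,X]$ is multiplication by $k$. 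Now apply Theorem~\ref{Naturality HOWP}(c) with $f_i=\mathrm{id}_{B_i}$ for $i\neq j$ and $f_j=d_k$: the left-hand side is $k\cdot[h_1,\dots,h_n]$, while the right-hand side is $[h_1\Sigma\,\mathrm{id},\dots,h_j\Sigma d_k,\dots,h_n\Sigma\,\mathrm{id}]=[h_1,\dots,kh_j,\dots,h_n]$, which is the asserted inclusion.

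For part (b) the inclusion ``$\subseteq$'' is exactly Theorem~\ref{Naturality HOWP}(c). For ``$\supseteq$'', pick homotopy inverses $g_i\colon B_i\to A_i$ of the $f_i$ and apply Theorem~\ref{Naturality HOWP}(c) to the maps $g_i$ and $h_i\Sigma f_i\colon\Sigma A_i\to X$:
\begin{align*}
(\Sigma^{n-1}(g_1\wedge\cdots\wedge g_n))^{*}[h_1\Sigma f_1,\dots,h_n\Sigma f_n]&\subseteq[(h_1\Sigma f_1)\Sigma g_1,\dots,(h_n\Sigma f_n)\Sigma g_n]\\
&=[h_1\Sigma(f_1g_1),\dots,h_n\Sigma(f_ng_n)]=[h_1,\dots,h_n],
\end{align*}
where the last equality uses $f_ig_i\simeq\mathrm{id}_{B_i}$ together with Remark~\ref{Remark of Def HOWP}(2). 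Applying $(\Sigma^{n-1}(f_1\wedge\cdots\wedge f_n))^{*}$ to both sides and using $(\Sigma^{n-1}(f_1\wedge\cdots))^{*}(\Sigma^{n-1}(g_1\wedge\cdots))^{*}=(\Sigma^{n-1}((g_1f_1)\wedge\cdots\wedge(g_nf_n)))^{*}=\mathrm{id}$ (since $g_if_i\simeq\mathrm{id}_{A_i}$), the left-hand side collapses to $[h_1\Sigma f_1,\dots,h_n\Sigma f_n]$, giving the reverse inclusion. Part (c) is proved the same way, using Theorem~\ref{Naturality HOWP}(d) with a homotopy inverse $g'$ of $g$ and the identity $g_{*}g'_{*}=(gg')_{*}=\mathrm{id}$ on homotopy sets.

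I expect the only delicate point to be the co-$H$ bookkeeping in part (a) — identifying $(\Sigma^{n-1}(\mathrm{id}\wedge\cdots\wedge d_k\wedge\cdots\wedge\mathrm{id}))^{*}$ with multiplication by $k$ and $\Sigma d_k$ with $k\cdot\mathrm{id}_{\Sigma B_j}$ — but this rests only on standard properties of degree maps of (iterated) suspensions, which is why the hypothesis $B_j=\Sigma B'_j$ is needed. Parts (b) and (c) are then purely formal consequences of the naturality already established, so no further obstacle is anticipated.
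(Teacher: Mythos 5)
Your proposal is correct and follows essentially the same route as the paper: part (a) is Theorem~\ref{Naturality HOWP}(c) applied with $f_j$ a degree-$k$ self-map of the suspension $B_j=\Sigma B'_j$ (the paper writes this map as $k\,id_{B_j}$, which is your $d_k$), and parts (b), (c) are obtained exactly as you do, by combining the one-sided inclusion of Theorem~\ref{Naturality HOWP}(c)/(d) with the same inclusion applied to homotopy inverses $g_i$ of $f_i$ and using $(\Sigma^{n-1}(f_1\wedge\cdots\wedge f_n))^{\ast}(\Sigma^{n-1}(g_1\wedge\cdots\wedge g_n))^{\ast}=\mathrm{id}$. No gaps.
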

\begin{proof} For $(a)$,
	\begin{align}
		&[h_1,\dots,kh_j,\dots h_n]=[h_1\Sigma id_{B_1}, \dots,h_j\Sigma( k id_{B_j}) ,\dots, h_n \Sigma id_{B_n}] \nonumber \\
		\supset &(\Sigma^{n-1}id_{B_1}\wedge \dots \wedge  (k id_{B_j}) \wedge \dots \wedge id_{B_n} )^{\ast}[h_1,\dots,h_j,\dots h_n]\nonumber \\
		=&k [h_1,\dots,h_j,\dots h_n]. \nonumber
	\end{align}
	This conclusion can also be proved  by using Theorem 2.13 of \cite{Porter}.

	For $(b)$ and $(c)$, we only prove $(b)$, since the proof of $(c)$ is easier by the same method.
	Let $g_i$ be the homotopy inverse of $f_i$ $(i=1,2,\dots,n)$.
	\begin{align}
		&[h_1\Sigma f_1,\dots,h_n \Sigma f_n]=id^{\ast}_{\Sigma^{n-1}(A_1\wedge \dots\wedge A_n )}[h_1\Sigma f_1,\dots,h_n \Sigma f_n]   \nonumber \\
		=&(\Sigma^{n-1}(g_1\wedge \dots\wedge g_n )\Sigma^{n-1}(f_1\wedge \dots\wedge f_n ))^{\ast}[h_1\Sigma f_1,\dots,h_n \Sigma f_n] \nonumber \\
		=&(\Sigma^{n-1}(f_1\wedge \dots\wedge f_n ))^{\ast}(\Sigma^{n-1}(g_1\wedge \dots\wedge g_n ))^{\ast}[h_1\Sigma f_1,\dots,h_n \Sigma f_n] \nonumber \\
		\subset &(\Sigma^{n-1}(f_1\wedge \dots\wedge f_n ))^{\ast}[h_1\Sigma f_1\Sigma g_1,\dots,h_n \Sigma f_n\Sigma g_n] \nonumber \\
		=&(\Sigma^{n-1}(f_1\wedge \dots\wedge f_n ))^{\ast}[h_1,\dots,h_n ]. \nonumber
	\end{align}
	Then from  $(c)$ of Theorem 2.3, we get $(b)$ of this Corollary.
\end{proof}

The following lemma  about the property of third order Whitehead product is from the Theorem 4.2 of \cite{Golasinski deMelo}, and the special case for spheres $X_i (i=1,2,3)$ are given by Hardie \cite{Hardie Zeeman}.
\begin{lemma}\label{lemma for [a,b,c]}
	$[f_1,f_2,f_3]$ with $f_i:\Sigma X_i\rightarrow X$ $(i=1,2,3)$  is a coset of the following subgroup of  group $[\Sigma^2X_1\wedge X_2\wedge X_3, X]$
	\begin{align}
		[[\Sigma^2X_2\wedge X_3, X],  f_1]+[[\Sigma^2X_1\wedge X_3, X],  f_2]+[[\Sigma^2X_1\wedge X_2, X],  f_3]. \nonumber
	\end{align}
\end{lemma}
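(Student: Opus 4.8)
The plan is to read $[f_1,f_2,f_3]$ off the set of extensions of the wedge map $(f_1,f_2,f_3)$ over the fat wedge $T:=T_1(\Sigma X_1,\Sigma X_2,\Sigma X_3)$, and to control that set by a Barratt--Puppe argument. By definition, $[f_1,f_2,f_3]=\{\,FW_3 \mid F\colon T\to X,\ Fk_i\simeq f_i\ (i=1,2,3)\,\}$, with $W_3$ the attaching map of \eqref{Cof for Cartesian Pordu.}. Two cofibrations are in play. The first is \eqref{Cof for Cartesian Pordu.} itself, $\Sigma^2 X_1\wedge X_2\wedge X_3\xrightarrow{W_3}T\to\Sigma X_1\times\Sigma X_2\times\Sigma X_3$. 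The second records the cell structure of $T$: writing $W:=\Sigma X_1\vee\Sigma X_2\vee\Sigma X_3$, the space $T$ is obtained from $W$ by attaching, for each pair $i<j$, a cone on $\Sigma X_i\wedge X_j$ along the generalized Whitehead product $w_{ij}\colon\Sigma X_i\wedge X_j\to W$ of the inclusions of $\Sigma X_i$ and $\Sigma X_j$ (this is \eqref{Cof for Cartesian Pordu.} applied to the two factors $\Sigma X_i,\Sigma X_j$, since $\Sigma X_i\times\Sigma X_j$ is the $(i,j)$-face of $T$); equivalently there is a cofibration $A\xrightarrow{q}W\xrightarrow{\iota}T$ with $A:=\bigvee_{i<j}\Sigma X_i\wedge X_j$ and $q:=\bigvee_{i<j}w_{ij}$. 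Thus the problem splits into describing the fibre of $\iota^\ast\colon[T,X]\to[W,X]$ over $(f_1,f_2,f_3)$ and then computing $F\mapsto FW_3$ on that fibre.

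For the first step I would apply $[-,X]$ to the Barratt--Puppe sequence of $A\xrightarrow{q}W\xrightarrow{\iota}T\xrightarrow{\delta}\Sigma A\xrightarrow{-\Sigma q}\Sigma W$, obtaining the exact sequence of pointed sets $[\Sigma W,X]\xrightarrow{(\Sigma q)^\ast}[\Sigma A,X]\xrightarrow{\delta^\ast}[T,X]\xrightarrow{\iota^\ast}[W,X]$. Since $q$ is a wedge of generalized Whitehead products, $\Sigma q\simeq 0$, so $(\Sigma q)^\ast$ is trivial. Hence, assuming $[f_1,f_2,f_3]\neq\emptyset$ (which the statement tacitly presupposes) and fixing a reference extension $F_0$, every extension of $(f_1,f_2,f_3)$ over $T$ is of the form $F_0\ast a$ for a unique $a=(a_{12},a_{13},a_{23})\in[\Sigma A,X]=\bigoplus_{i<j}[\Sigma^2 X_i\wedge X_j,X]$, where $\ast$ denotes the coaction $T\to T\vee\Sigma A$; in other words, the set of extensions is a principal homogeneous space over the abelian group $\bigoplus_{i<j}[\Sigma^2 X_i\wedge X_j,X]$.

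It remains to transport this indeterminacy along $W_3$, which is the heart of the proof. Define $L\colon\bigoplus_{i<j}[\Sigma^2 X_i\wedge X_j,X]\to[\Sigma^2 X_1\wedge X_2\wedge X_3,X]$ by $L(a):=(F_0\ast a)W_3-F_0W_3$. The assertion to prove is that $L$ is a well-defined homomorphism (independent of $F_0$) with $L(a_{12},a_{13},a_{23})=\pm[a_{23},f_1]\pm[a_{13},f_2]\pm[a_{12},f_3]$, the generalized Whitehead product of each face-modification $a_{ij}$ with the complementary coordinate map $f_k$. Granting this, $\operatorname{im}L=[[\Sigma^2 X_2\wedge X_3,X],f_1]+[[\Sigma^2 X_1\wedge X_3,X],f_2]+[[\Sigma^2 X_1\wedge X_2,X],f_3]$, which is a subgroup of $[\Sigma^2 X_1\wedge X_2\wedge X_3,X]$ since each summand is the image of the additive map $g\mapsto[g,f_k]$ (additivity of the generalized Whitehead product in the suspension variable $g$; cf. Corollary \ref{Naturality HOWP equivalence}(a)), and therefore $[f_1,f_2,f_3]=F_0W_3+\operatorname{im}L$ is the claimed coset. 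Identifying $L$ is the main obstacle: it requires a careful analysis of how the top-cell attaching map $W_3$ of $\Sigma X_1\times\Sigma X_2\times\Sigma X_3$ meets the codimension-one product faces $\Sigma X_i\times\Sigma X_j$ of $T$. Concretely, one composes $W_3$ with the coaction $T\to T\vee\Sigma A$, extracts its components in the summands $\Sigma(\Sigma X_i\wedge X_j)$ of $\Sigma A$ (along with the lower commutator components), and, after applying $F_0\vee a$ and using the naturality of $W_n$ (Theorem \ref{Naturality HOWP}) together with the Jacobi-type relations for Whitehead products, recognizes the net contribution of $a_{ij}$ as $\pm[a_{ij},f_k]$ and checks that all contributions of higher order in $a$ either vanish or already lie in $\operatorname{im}L$. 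This computation is precisely Theorem 4.2 of \cite{Golasinski deMelo}; when the $X_i$ are spheres it is Hardie's analysis of the Zeeman product \cite{Hardie Zeeman}. I would organize it inductively via the pushout presentation $T_1(\Sigma X_1,\Sigma X_2,\Sigma X_3)=\bigl((\Sigma X_1\vee\Sigma X_2)\times\Sigma X_3\bigr)\cup_{\Sigma X_1\vee\Sigma X_2}\bigl(\Sigma X_1\times\Sigma X_2\bigr)$, peeling off one face at a time and reducing to the two-factor (generalized Whitehead product) situation.
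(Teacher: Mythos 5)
The paper itself does not prove this lemma; it is imported verbatim as Theorem 4.2 of \cite{Golasinski deMelo} (with the spherical case credited to Hardie \cite{Hardie Zeeman}), so there is no internal proof to measure you against. Your scaffolding is the correct one and is essentially how the cited proof is organized: present the fat wedge $T$ as the cofibre of $q=\bigvee_{i<j}[k_i,k_j]\colon\bigvee_{i<j}\Sigma X_i\wedge X_j\to \Sigma X_1\vee\Sigma X_2\vee\Sigma X_3$, use the Barratt--Puppe sequence together with $\Sigma q\simeq 0$ to see that the extensions of $(f_1,f_2,f_3)$ over $T$ form a single orbit of the coaction of $\bigoplus_{i<j}[\Sigma^2X_i\wedge X_j,X]$, and then push the resulting indeterminacy forward along $W_3$. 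Two small points: you do not need (and in general do not have) uniqueness of the class $a$ with $F=F_0\ast a$ --- transitivity on the fibre of $\iota^\ast$ is all that is used; and the identification of the image of your difference map $L$ with the stated subgroup does require the left-distributivity $[g+g',f_k]=[g,f_k]+[g',f_k]$, which holds here because each $\Sigma^2X_i\wedge X_j$ is a suspension of a suspension.

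The genuine gap is that the only substantive step --- the identity $L(a_{12},a_{13},a_{23})=\pm[a_{23},f_1]\pm[a_{13},f_2]\pm[a_{12},f_3]$ --- is not carried out but is instead referred back to Theorem 4.2 of \cite{Golasinski deMelo}, i.e.\ to the very statement being proved. Everything before that point is formal; all of the content of the lemma is concentrated in showing that altering the extension on the $(i,j)$-face cell by $a_{ij}$ changes $FW_3$ by exactly the generalized Whitehead product $[a_{ij},f_k]$ with the complementary coordinate, and in checking that no cross-terms survive. Your sketch of how to do this (compose $W_3$ with the coaction $T\to T\vee\Sigma A$, peel off one product face at a time via the pushout presentation of $T$, reduce to the two-variable case) is the right strategy, but as written the proposal is a correct reduction of the lemma to its own source rather than a self-contained proof.
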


\section{Relative James construction}
\label{sec: 3}
Let  $(X,A)$ be a pair of  spaces with base point $*\in A$, and suppose that $A$ is closed in $X$. In \cite{Gray}, B.Gray construct a space $(X,A)_{\infty}$ analogous to the James construction, which is denoted by us as $J(X,A)$ to parallel with the the absolute James construction whose symbol is $J(X)$. In fact, $J(X,A)$ is the subset of $J(X)$ of words for which letters after the first are in $A$.  Especially,  $J(X,X)=J(X)$. As parallel with the familiar symbol $J_{r}(X)$ which is the $r$-th filtration of $J(X)$, we denote the  $r$-th filtration of $J(X,A)$ by  $J_{r}(X,A):=J(X,A)\cap J_{r}(X)$, which is denoted by Gray as  $(X, A)_r$ in  \cite{Gray}.
For example,
\begin{align}
	&J_{1}(X,A)=X, J_{2}(X,A)=(X\times A)/((a,\ast)\thicksim (\ast,a)), \nonumber\\
	&J_{3}(X,A)=(X\times A\times A )/((x,\ast, a)\thicksim  (x, a, \ast);  (a, a',\ast)\thicksim (\ast,a,a')\thicksim (a,\ast,a')).\nonumber
\end{align}
We denote an element in $J_{n}(X,A)$ by $\overline{(x,a_1,a_2,\dots, a_{n-1})}$. $J_{r-1}(X,A)$ is regarded as a subspace of $J_{r}(X,A)$ by the natural inclusion
\begin{align*}
	J_{r-1}(X,A)\xrightarrow{I_r} J_{r}(X,A), \quad \overline{(x,a_1,\dots, a_{r-1})}\mapsto \overline{(x,a_1,\dots, a_{r-1},\ast)}
\end{align*}
It is easy to see that $J_{n}(X,A)/J_{n-1}(X,A)$ is naturally homeomorphic to \newline $(X\times A^{n-1})/T_1(X,A,\dots, A)=X\wedge A^{\wedge (n-1)}$ and  there is a  pushout diagram for $r\geq 2$:
$$ \footnotesize{\xymatrix{
		X\times A^{r-1} \ar[r]^{\Pi_{r}} &  J_{r}(X, A) \\
		T_1(X,A,\dots,A) \ar@{^{(}->}[u]\ar[r]^{\nu'_{r}} & J_{r-1}(X, A)\ar@{^{(}->}[u]^{I_{r}} } }$$
\begin{align*}
	\text{ where}~~\Pi_{r}:	 & (x,a_1,\dots, a_{r-1})\mapsto \overline{(x,a_1,\dots, a_{r-1})}; \\
	\nu'_{r}: & (\ast,a_1,\dots, a_{r-1})\mapsto\overline{(a_1,\dots, a_{r-1})}   \\
	&	(x,a_1,\dots, a_{i-1},\ast, a_{i+1}\dots, a_{r-1})\mapsto\overline{(x,a_1,\dots, a_{i-1},a_{i+1},\dots, a_{r-1})}
\end{align*}
All the maps in the above pushout diagram are natural.

For  CW complex pair $(X,A)$ and inclusion $A\stackrel{i}\hookrightarrow X$,
Gray \cite{Gray} showed that the homotopy fiber $F_{p}$ of the pinch map   $ X\cup_{i}CA\xrightarrow{p}\Sigma A$ has the same homotopy type as $J(X,A)$ and $\Sigma J(X,A)\simeq \bigvee_{i\geq 0}(\Sigma X\wedge A^{\wedge i})$;  $\Sigma J_{k}(X,A)\simeq \bigvee_{i= 0}^{k-1}(\Sigma X\wedge A^{\wedge i})$; Moreover

\begin{theorem}\label{Gray's Thm J2(X,A)}[Gray,$1972$ ]
	For a CW complex pair $(X,A)$,   if $X=\Sigma X'$, $A=\Sigma A'$, then
	\begin{align}
		J_2(X,A)\simeq X\cup_{\gamma_2}C(X\wedge A'), \gamma_2=[id_X, i].  \nonumber
	\end{align}
\end{theorem}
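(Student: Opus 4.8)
The idea is to realise $J_2(X,A)$ as a mapping cone by pasting two homotopy pushout squares: Gray's pushout for $J_2(X,A)$ displayed above (the case $r=2$) and Porter's principal cofibration \eqref{Cof for Cartesian Pordu.} applied to the factors of the two suspensions. Since $X=\Sigma X'$ and $A=\Sigma A'$, take $n=2$, $X_1=X'$, $X_2=A'$ in \eqref{Cof for Cartesian Pordu.}. As the fat wedge on two factors is the ordinary wedge, $T_1(X,A)=X\vee A$, and \eqref{Cof for Cartesian Pordu.} becomes a cofibration $X\wedge A'\xrightarrow{W_2}X\vee A\to X\times A$, whose last map is the standard inclusion $X\vee A\hookrightarrow X\times A$ --- exactly the inclusion appearing on the left edge of Gray's pushout. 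Equivalently $X\times A$ is the mapping cone $(X\vee A)\cup_{W_2}C(X\wedge A')$, i.e.\ a homotopy pushout of $C(X\wedge A')\leftarrow X\wedge A'\xrightarrow{W_2}X\vee A$.

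First I would set the two squares side by side along their common edge $X\vee A\hookrightarrow X\times A$. Gray's diagram exhibits $J_2(X,A)$ as the pushout of $X\xleftarrow{\nu'_2}X\vee A\hookrightarrow X\times A$; as $X\vee A\hookrightarrow X\times A$ is a subcomplex inclusion, hence a cofibration, this is also a homotopy pushout, and by the formula for $\nu'_2$ the map $\nu'_2\colon X\vee A\to J_1(X,A)=X$ restricts to $id_X$ on the $X$-summand and to $i$ on the $A$-summand. Pasting the Porter square and the Gray square along $X\vee A\hookrightarrow X\times A$, the pasting law for homotopy pushouts gives that the outer rectangle is a homotopy pushout: its corners are $X\wedge A'$, $X$, $C(X\wedge A')$, $J_2(X,A)$, its top edge is the composite $\nu'_2\circ W_2\colon X\wedge A'\to X$, and its left edge is the cone inclusion. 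Hence $J_2(X,A)\simeq X\cup_{\nu'_2W_2}C(X\wedge A')$.

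It remains to identify $\nu'_2W_2$ with the generalised Whitehead product $[id_X,i]$. By construction $W_2=W(id_{X\vee A})$, so, by the naturality of $W$, $\nu'_2W_2=(\nu'_2)_\ast W_2=W(\nu'_2)$ is the second order Whitehead product of $\nu'_2\colon T_1(X,A)\to X$. Since $\nu'_2$ restricts to $id_X$ and to $i$ on the two canonical summands $X,A\hookrightarrow X\vee A$, the definition of the set of higher order Whitehead products in Section~\ref{sec: 2} yields $\nu'_2W_2\in[id_X,i]$; moreover, for $n=2$ the domain $T_1(X,A)=X\vee A$ is an ordinary wedge, so a map out of it is determined up to homotopy by its restrictions and the set $[id_X,i]$ is a single homotopy class, namely the classical generalised Whitehead product of Arkowitz \cite{Arkowitz}. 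Denoting it $\gamma_2$, we conclude $J_2(X,A)\simeq X\cup_{\gamma_2}C(X\wedge A')$ with $\gamma_2=[id_X,i]$, as asserted.

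The main (and essentially only) delicate point is the homotopy-pushout bookkeeping in the pasting step: one must check that \eqref{Cof for Cartesian Pordu.} genuinely presents $X\times A$ as a homotopy pushout compatible with Gray's pushout square, and that the inclusion $X\vee A\hookrightarrow X\times A$ used in both squares is literally the same map --- if one insists on strict pushouts this is handled by replacing $W_2$ by a cofibration or by passing to mapping cylinders. Once this is arranged the argument is entirely formal and the identification of the attaching map follows immediately from the definitions of Section~\ref{sec: 2}; no explicit homotopy computation is needed.
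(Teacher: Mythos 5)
Your argument is correct, and it is essentially the paper's own approach: the paper states this result as Gray's theorem without proof, but its proof of the generalization (Theorem \ref{Thm Jn(X,A)}) is exactly your argument for general $n$ --- paste Porter's cofibration \eqref{Cof for Cartesian Pordu.} onto Gray's pushout square along $T_1(X,A)\hookrightarrow X\times A$, conclude $J_2(X,A)\simeq C_{\nu'_2W_2}$, and identify $\nu'_2W_2=W(\nu'_2)\in[\nu'_2k_1,\nu'_2k_2]=[id_X,i]$ by naturality. The cofibrancy bookkeeping you flag at the end is handled in the paper via Porter's space $Q(X,A)$ with $\bar\rho\bar h=W_2$ together with Lemma \ref{Lem pushout}, which is the strict-pushout version of your pasting step.
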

Next we generalize above Gray's Theorem to the $J_n(X,A)$ for any $n\geq 2$ by the concepts of higher order Whitehead products.

Let $j^r_{X}:X=J_{1}(X,A)\hookrightarrow J_{r}(X,A)$ be the canonical inclusion.
\begin{theorem}\label{Thm Jn(X,A)}
	Let $(X,A)$ be a CW complex pair, $X=\Sigma X'$, $A=\Sigma A'$, and $A\stackrel{i_A}\hookrightarrow X$ be the inclusion. Then there is a principle cofibration sequence $\Sigma^{n-1}X'\wedge A'^{\wedge(n-1)}\xrightarrow{\gamma_n} J_{n-1}(X,A) \stackrel{I_n}\hookrightarrow J_n(X,A)$, where
	$\gamma_n$ is an element in the set of $n$-th order Whitehead products $ [j^{n-1}_{X}, j^{n-1}_Xi_{A},\dots,  j^{n-1}_Xi_{A}]$.

	Moreover, this principle cofibration sequence is natural for a map of the pairs  $(X_1', A_1')\rightarrow (X'_2, A_2')$.
\end{theorem}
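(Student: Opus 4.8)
The plan is to realize $J_n(X,A)$ as the mapping cone of a suitable map out of $\Sigma^{n-1}X'\wedge A'^{\wedge(n-1)}$ by pasting together two pushout squares that are already available: Gray's pushout square displayed above, which builds $J_n(X,A)$ out of $J_{n-1}(X,A)$ and $X\times A^{n-1}$, and Porter's principal cofibration \eqref{Cof for Cartesian Pordu.}, which realizes the product $X\times A^{n-1}$ as the mapping cone of $W_n$. Concretely, since $X=\Sigma X'$ and $A=\Sigma A'$, I would instantiate \eqref{Cof for Cartesian Pordu.} at $(X_1,X_2,\dots,X_n)=(X',A',\dots,A')$ ($n-1$ copies of $A'$), obtaining the natural principal cofibration
\[
\Sigma^{n-1}X'\wedge A'^{\wedge(n-1)}\xrightarrow{W_n}T_1(X,A,\dots,A)\xrightarrow{q}X\times A^{n-1},
\]
in which $q$ is precisely the fat-wedge inclusion that appears as the left-hand vertical map of Gray's pushout square.

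Next I would paste these two squares along the common map $q$. Porter's statement says exactly that the square with top edge $W_n$, with $C(\Sigma^{n-1}X'\wedge A'^{\wedge(n-1)})$ along one side, and with $X\times A^{n-1}$ as its pushout corner is a homotopy pushout; placing it to the left of Gray's pushout square and invoking the pasting law for pushouts, the resulting outer rectangle exhibits $J_n(X,A)$ as the homotopy pushout
\[
C(\Sigma^{n-1}X'\wedge A'^{\wedge(n-1)})\longleftarrow \Sigma^{n-1}X'\wedge A'^{\wedge(n-1)}\xrightarrow{\gamma_n}J_{n-1}(X,A),\qquad \gamma_n:=\nu'_n\circ W_n ,
\]
and the induced map $J_{n-1}(X,A)\to J_n(X,A)$ in this pushout is exactly $I_n$. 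This gives the asserted principal cofibration sequence $\Sigma^{n-1}X'\wedge A'^{\wedge(n-1)}\xrightarrow{\gamma_n}J_{n-1}(X,A)\xrightarrow{I_n}J_n(X,A)$.

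It then remains to recognize $\gamma_n=W(\nu'_n)$ as a member of the $n$-th order Whitehead product set $[\,j^{n-1}_X,j^{n-1}_Xi_A,\dots,j^{n-1}_Xi_A\,]$. By the definition of that set this only requires computing the restriction of $\nu'_n$ to each wedge summand of $T_1(X,A,\dots,A)$, i.e.\ $\nu'_n\circ k_j$ for the canonical injections $k_1\colon X\hookrightarrow T_1(X,A,\dots,A)$ and $k_j\colon A\hookrightarrow T_1(X,A,\dots,A)$ for $2\le j\le n$. From the explicit formula for $\nu'_n$ one reads off $\nu'_n k_1=j^{n-1}_X$ (drop a basepoint coordinate), while $\nu'_n k_j(a)=\overline{(\ast,\dots,a,\dots,\ast)}=\overline{(a,\ast,\dots,\ast)}$ after sliding the unique non-basepoint letter $a\in A\subseteq X$ to the front using the defining relations of $J_{n-1}(X,A)$, so $\nu'_n k_j=j^{n-1}_Xi_A$. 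Hence $\nu'_n$ extends the wedge-sum map with components $j^{n-1}_X,j^{n-1}_Xi_A,\dots,j^{n-1}_Xi_A$, whence $\gamma_n=\nu'_nW_n\in[\,j^{n-1}_X,j^{n-1}_Xi_A,\dots,j^{n-1}_Xi_A\,]$; for $n=2$ this recovers Theorem~\ref{Gray's Thm J2(X,A)}. Naturality with respect to a map of pairs $(X_1',A_1')\to(X_2',A_2')$ is then immediate: $W_n$ is natural (Porter), and $\nu'_n$, $I_n$ and the James filtrations are natural in Gray's pushout square, so $\gamma_n=\nu'_nW_n$ and $I_n$ are natural and the pasting of squares is functorial.

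The step I expect to be the main obstacle is the pasting argument in the second paragraph: one has to verify that the map $T_1(X,A,\dots,A)\to X\times A^{n-1}$ occurring in Gray's pushout square really is, up to the homotopy inherent in a principal cofibration, the canonical map $q$ of Porter's cofibration \eqref{Cof for Cartesian Pordu.}, and that the inclusions involved are cofibrations so that the strict pushout defining $J_n(X,A)$ agrees with the homotopy pushout used in the pasting. Once this compatibility is established, identifying $\gamma_n$ in the third paragraph is routine bookkeeping with the relations defining $J_{n-1}(X,A)$.
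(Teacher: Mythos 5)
Your proposal is correct and follows essentially the same route as the paper: paste Porter's principal cofibration \eqref{Cof for Cartesian Pordu.} onto Gray's pushout square, identify $\gamma_n=\nu'_nW_n$ as a Whitehead product element by computing $\nu'_nk_1=j^{n-1}_X$ and $\nu'_nk_j=j^{n-1}_Xi_A$, and deduce naturality from that of $W_n$ and $\nu'_n$. The compatibility issue you flag as the main obstacle is exactly what the paper resolves by invoking Porter's space $Q(X,A,\dots,A)$ with $\bar\rho\bar h=W_n$ and his Theorem~2.3 (which identifies $X\times A^{n-1}$ as the strict mapping cone $C_{\bar\rho}$), together with the point-set pushout Lemma~\ref{Lem pushout}.
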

\begin{proof}
	We have the following topological commutative diagram
	\begin{align}
		\small{\xymatrix{
				\Sigma^{n-1}X'\wedge A'^{\wedge(n-1)}  \ar@/^1pc/[rr]^-{W_n}\ar[r]_-{\bar{h}\simeq }&Q(X,A,\dots,A)\ar[r]_{\bar{\rho}}&T_1(X,A,\dots,A)\ar@{^{(}->}[r]\ar[d]^{\nu'_n}&X\times A^{n-1}\ar[d]^{\Pi_{n}}\\
				&Q(X,A,\dots,A)\ar@{=}[u]\ar[r]^{\nu'_n\bar{\rho}}& J_{n-1}(X,A)\ar@{^{(}->}[r]^{I_n} & J_{n}(X,A)
		} }. \label{diagram: Jn(X,A)}
	\end{align}
	where the space  $Q(X,A,\dots,A)$,  the natural map $\bar{\rho}$ and the natural homotopy equivalence  $\bar{h}$ are defined in Appendix of \cite{Porter}, which satisfy $\bar{\rho}\bar{h}=W_n$. Moreover, the right commutative square is a pushout diagram and
	$X\times A^{n-1}$ is naturally homeomorphic to the mapping cone $C_{\bar{\rho}}$ (Theorem 2.3 of \cite{Porter}).
	
	It is easy to see that $W_n\in [k_1, k_2,\dots, k_n]$. There are
	following  commutative  diagrams  for $2\leq r\leq n$
	\begin{align}
		\small{\xymatrix{
				A \ar@{^{(}->}[r]^-{k_r}\ar@{_{(}->}[d]_{i_A}&T_1(X,A,\dots,A)\ar[d]^{\nu'_n}&X\ar@{_{(}->}[l]_-{k_1} \ar[d]^{id_X}\\
				J_{1}(X,A)=X \ar@{^{(}->}[r]^-{j^{n-1}_{X}}&J_{n-1}(X,A) & J_{1}(X,A)=X\ar@{_{(}->}[l]_-{j^{n-1}_{X}}
		} }. \nonumber
	\end{align}
	i.e., $\nu'_{n}k_1=j^{n-1}_{X}$, $\nu'_{r}k_2=j^{n-1}_{X}i_A (r=2,\dots, n)$.
	
	So from $(d)$ of Theorem \ref{Naturality HOWP}, we have
	\begin{align}
		\gamma_n&:=\nu_n'W_n\in \nu'_{n\ast}[k_1, k_2,\dots, k_n]\subset [\nu'_{n}k_1, \nu'_{n}k_2,\dots, \nu'_{n}k_n]\nonumber \\
		&=[j^{n-1}_{X}, j^{n-1}_Xi_{A},\dots,  j^{n-1}_Xi_{A}]. \nonumber
	\end{align}
	By the Theorem 2.3 of \cite{Porter}, the top row is the principle cofibration sequence and the map $W_n$ is natural for a map of the pairs  $(X_1', A_1')\rightarrow (X'_2, A_2')$.  Then the naturality of $\gamma_n$ is coming from the naturality of the map $W_n$  and $\nu'_n$.
	
	Now  the conclusion that the bottom sequence in (\ref{diagram: Jn(X,A)}) is also a principle cofibration sequence will be obtained by the following Lemma.
\end{proof}

\begin{lemma}\label{Lem pushout}
	Suppose that the right square of the following commutative diagram is a pushout diagram
	\begin{align}
		\small{\xymatrix{
				A \ar[r]^-{f}&B\ar@{^{(}->}[r]^-{i}\ar[d]^{g}&C=B\cup_f CA\ar[d]^{j_C}\\
				A  \ar@{=}[u]\ar[r]^-{h=gf}& D\ar[r]^{j_D} & P
		} }. \nonumber
	\end{align}
	Then $P$ is homeomorphic to the mapping cone $C_h$.
	\begin{proof}
		\begin{align}
			&CA=([0,1]\times A)/(0,a)\thicksim \ast \thicksim (t,\ast) ;\nonumber \\
			&C=B\cup_f CA=(B\amalg CA)/(1,a)\thicksim f(a);\nonumber \\
			&P=(C\amalg D)/ i(b)\thicksim g(b)=(B\amalg CA\amalg D)/(1,a)\thicksim f(a), b\thicksim g(b); \nonumber\\
			&C_h=(D\amalg CA)/(1,a)\thicksim h(a). \nonumber
		\end{align}
		Denote  the elements in $P$  (resp. $C_h$)  by $\overline{x}$(resp. $\widehat{x}$)  for $x\in B\amalg CA\amalg D$  (resp. $x\in D\amalg CA$).  Define
		\begin{align}
			& \phi: P\rightarrow C_h, \overline{b}\mapsto \widehat{g(b)}; \overline{(t,a)}\mapsto  \widehat{(t,a)}; \overline{d}\mapsto \widehat{d} \nonumber \\
			& \varphi: C_h \rightarrow P, \widehat{d}\mapsto  \overline{d};  \widehat{(t,a)} \mapsto \overline{(t,a)}.\nonumber
		\end{align}
		It is easy to check $\phi, \varphi$ are well-defined maps and $\phi\varphi=id_{C_h}$, $\varphi\phi=id_{P}$.
	\end{proof}
\end{lemma}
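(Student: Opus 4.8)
The plan is to recognize the claim as an instance of the pasting law for pushout squares in the category of (pointed) topological spaces. The point is that, although the left-hand square in the displayed diagram is \emph{not} itself a pushout, it becomes the bottom half of a pasted pair of pushouts once the cone on $A$ is made explicit.

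First I would record the two pushouts actually at issue. Writing $\iota_A\colon A\hookrightarrow CA$ for the inclusion of the base into the reduced cone $CA=([0,1]\times A)/\bigl((0,a)\sim\ast\sim(t,\ast)\bigr)$, the mapping cone $C=B\cup_f CA$, together with $i\colon B\to C$ and the canonical map $\bar\iota\colon CA\to C$, is by definition the pushout of the span $B\xleftarrow{f}A\xrightarrow{\iota_A}CA$; likewise $C_h=D\cup_h CA$ is the pushout of $D\xleftarrow{h}A\xrightarrow{\iota_A}CA$. Then I would stack the square $\bigl[A\xrightarrow{\iota_A}CA,\ A\xrightarrow{f}B,\ i,\ \bar\iota\bigr]$ on top of the hypothesised pushout square $\bigl[B\xrightarrow{i}C,\ B\xrightarrow{g}D,\ j_C,\ j_D\bigr]$. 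The top square is a pushout by definition of $C$ and the bottom one by hypothesis, so the pasting law forces the outer rectangle, with corners $A,\ CA,\ D,\ P$, top edge $\iota_A$, left edge $g\circ f=h$ and bottom edge $j_D$, to be a pushout as well. But a pushout of $D\xleftarrow{h}A\xrightarrow{\iota_A}CA$ is, by definition, the mapping cone $C_h$; since pushouts of topological spaces are computed as the naive quotient spaces and are unique up to a canonical homeomorphism compatible with the structure maps, this yields the desired homeomorphism $P\cong C_h$.

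There is no serious obstacle here, only bookkeeping that must be done carefully. One has to check that the cone conventions match, i.e.\ that $B\cup_f CA$ is glued to $B$ along $A\times\{1\}$ via $f$ while $A\times\{0\}$ (and, in the pointed setting, $\{\ast\}\times[0,1]$) is collapsed, so that it is genuinely the pushout of $B\xleftarrow{f}A\hookrightarrow CA$ and not of some variant. If one prefers to bypass the pasting law altogether, the same result can be obtained hands-on: write out the quotient presentations of $P$ and of $C_h$, define $\phi\colon P\to C_h$ to be $g$ on the $B$-part and the identity on the cone- and $D$-parts, and $\varphi\colon C_h\to P$ to be the identity on the cone- and $D$-parts, then verify that both descend to well-defined continuous maps and are mutually inverse. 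In that version the only real work — and hence the ``hard'' part — is checking well-definedness against the gluing relations and continuity against the quotient topologies, which is routine.
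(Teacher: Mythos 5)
Your argument is correct, but it takes a different (and more structural) route than the paper. You invoke the pasting law for pushouts: since $C=B\cup_f CA$ is by construction the pushout of $B\xleftarrow{f}A\xrightarrow{\iota_A}CA$ and the right-hand square is a pushout by hypothesis, the composite rectangle exhibits $P$ as the pushout of $D\xleftarrow{h}A\xrightarrow{\iota_A}CA$, i.e.\ as $C_h$, and uniqueness of pushouts gives a canonical homeomorphism. This is sound; the one convention check you flag (that $C$ is glued along $A\times\{1\}$ via $f$, matching the paper's $CA=([0,1]\times A)/((0,a)\sim\ast\sim(t,\ast))$ and $C=(B\amalg CA)/((1,a)\sim f(a))$) does go through, and the analogous presentation of $C_h$ identifies it with the pushout of the composite span. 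The paper instead proves the lemma entirely by hand: it writes out the quotient presentations of $P$ and $C_h$ and defines explicit mutually inverse maps $\phi(\overline{b})=\widehat{g(b)}$, $\phi(\overline{(t,a)})=\widehat{(t,a)}$, $\phi(\overline{d})=\widehat{d}$ and $\varphi$ the evident inverse --- exactly the ``hands-on'' alternative you sketch in your last paragraph. Your categorical argument buys generality (it works verbatim in any cocomplete category and makes the homeomorphism canonical without point-set verification), while the paper's explicit construction is self-contained and displays the comparison maps that are reused implicitly when identifying $J_n(X,A)$ with the mapping cone of $\gamma_n$.
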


Let $X\xrightarrow{f}Y$ be a map of CW complex pair.  We always use $F_f$ and $M_f$ to denote the homotopy fiber  and  mapping cylinder of $f$ respectively. There is  the following homotopy commutative  diagram  where $i_Y$ is an homotopy equivalence and  $r_Y$ is the homotopy inverse of $i_Y$ such that  $i_Yf\simeq i_X$ and $r_Yi_X=f$.
\begin{align}
	\footnotesize{\xymatrix{
			X\ar@{=}[d] \ar[r]^{f} & Y \ar@{^{(}->}[d]^{i_Y \simeq}\\
			X \ar@{^{(}->}[r]^{i_X} & M_f\ar@/^0.3pc/[u]^{r_Y} } } \label{homotopy Y to Mf}
\end{align}
Then we get $F_p\simeq J(M_f,X)$ where $p:C_f\rightarrow \Sigma X$ is the canonical pinch map. We  always denote the natural inclusions $Y\hookrightarrow M_f=J_1(M_f,X)\hookrightarrow J(M_f,X)\simeq F_p$  and $Y\hookrightarrow C_f$ by $j_p$ and $j_f$ respectively.

Now we can prove our Main Theorem (Theorem \ref{thm 1.1}) by  rewriting it as the following Theorem.
\begin{theorem}\label{Thm Jn(Mf,X)}
	Let $X\xrightarrow{f}Y$ be a map of simply connected CW complexes, $X=\Sigma X'$, $Y=\Sigma Y'$. Then 
	$J_n(M_f,X)$ has the homotopy type  $Y\cup_{\gamma_2}C(\Sigma Y'\wedge X')\cup_{\gamma_3}\cdots \cup_{\gamma_n}C(\Sigma^{n-1} Y'\wedge X'^{\wedge{n-1}})$, $\gamma_r$ is an element of $r$-th order Whitehead products in $[j^{r-1}_{Y}, j^{r-1}_{Y}f, \cdots, j^{r-1}_{Y}f]$ where $j^{r-1}_{Y}: Y\hookrightarrow Y\cup_{\gamma_2}C(\Sigma Y'\wedge X')\cup_{\gamma_3}\cdots \cup_{\gamma_{r-1}}C(\Sigma^{r-2} Y'\wedge X'^{\wedge{(r-2)}})$ is the canonical inclusion for $r=2,\dots n$.
\end{theorem}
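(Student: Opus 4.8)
Proof proposal.

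The plan is to re-run, by induction on $n$, the proof of Theorem \ref{Thm Jn(X,A)} for the CW pair $(M_f,X)$, the one new ingredient being a passage from the non-suspension $M_f$ to the honest suspension $Y=\Sigma Y'$ along the homotopy equivalence $i_Y\colon Y\to M_f$ of diagram (\ref{homotopy Y to Mf}). The base case $n=1$ is $J_1(M_f,X)=M_f\simeq Y$. For the inductive step, assume $J_{n-1}(M_f,X)$ is homotopy equivalent, by an equivalence $\Phi_{n-1}$ carrying the inclusion $M_f=J_1(M_f,X)\hookrightarrow J_{n-1}(M_f,X)$ precomposed with $i_Y$ to the canonical inclusion $Y\hookrightarrow K_{n-1}$, to $K_{n-1}:=Y\cup_{\gamma_2}C(\Sigma Y'\wedge X')\cup_{\gamma_3}\cdots\cup_{\gamma_{n-1}}C(\Sigma^{n-2}Y'\wedge {X'}^{\wedge(n-2)})$, with $\gamma_r\in[j^{r-1}_Y,j^{r-1}_Yf,\dots,j^{r-1}_Yf]$ for $2\le r\le n-1$.

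For the step itself, start from the pushout square defining $J_n(X,A)$ recalled in Section \ref{sec: 3}, applied to $(M_f,X)$: $J_n(M_f,X)$ is the pushout of $J_{n-1}(M_f,X)\xleftarrow{\nu'_n}T_1(M_f,X,\dots,X)\hookrightarrow M_f\times X^{n-1}$, with the right-hand inclusion a cofibration. Because $M_f$ is only homotopy equivalent to a suspension I cannot feed this inclusion into Porter's cofibration (\ref{Cof for Cartesian Pordu.}) directly; instead I transport the whole square along $i_Y$. Fat wedges and products of CW complexes are homotopy functors, and a pushout along a cofibration sends a levelwise homotopy equivalence of spans to a homotopy equivalence (the gluing lemma), so with $\mu_n:=\Phi_{n-1}\circ\nu'_n\circ T_1(i_Y,id_X,\dots,id_X)$ the inductive hypothesis yields $J_n(M_f,X)\simeq K_n$, where $K_n$ is the pushout of $K_{n-1}\xleftarrow{\mu_n}T_1(Y,X,\dots,X)\hookrightarrow Y\times X^{n-1}$. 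Tracing the formula for $\nu'_n$ as in the proof of Theorem \ref{Thm Jn(X,A)}, and using $i_Yf\simeq i_X$ from diagram (\ref{homotopy Y to Mf}) together with the inductive identification of $j^{n-1}_{M_f}\circ i_Y$ with $Y\hookrightarrow K_{n-1}$, one checks that $\mu_n$ carries the first ($Y$-)coordinate injection $k_1\colon Y\hookrightarrow T_1(Y,X,\dots,X)$ to the canonical inclusion $j^{n-1}_Y\colon Y\hookrightarrow K_{n-1}$ and each $r$-th ($X$-)coordinate injection $k_r$ ($2\le r\le n$) to $j^{n-1}_Yf$. Now every space in sight is a suspension, so Porter's cofibration (\ref{Cof for Cartesian Pordu.}), Theorem 2.3 of \cite{Porter} and the diagram (\ref{diagram: Jn(X,A)}) apply verbatim with $(Y,X,\dots,X)$ in the role of $(\Sigma X',\Sigma A',\dots,\Sigma A')$: the inclusion $T_1(Y,X,\dots,X)\hookrightarrow Y\times X^{n-1}$ is, up to homotopy, the attachment of a cone on $\Sigma^{n-1}Y'\wedge {X'}^{\wedge(n-1)}$ along a representative $W_n\in[k_1,\dots,k_n]$.

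Pushing this cone attachment forward along $\mu_n$ and invoking Lemma \ref{Lem pushout} exactly as in the proof of Theorem \ref{Thm Jn(X,A)} identifies $K_n$ with the mapping cone of $\gamma_n:=\mu_nW_n$, so $K_n\simeq K_{n-1}\cup_{\gamma_n}C(\Sigma^{n-1}Y'\wedge {X'}^{\wedge(n-1)})$, which is the space claimed in the theorem. By part (d) of Theorem \ref{Naturality HOWP}, $\gamma_n\in\mu_{n\ast}[k_1,\dots,k_n]\subseteq[\mu_nk_1,\dots,\mu_nk_n]=[j^{n-1}_Y,j^{n-1}_Yf,\dots,j^{n-1}_Yf]$, which is the assertion on $\gamma_n$; and since the equivalence $J_n(M_f,X)\simeq K_n$ again carries $j^{n}_{M_f}\circ i_Y$ to the canonical inclusion $Y\hookrightarrow K_n$, the induction closes. (For $n=2$ this recovers Gray's Theorem \ref{Gray's Thm J2(X,A)}, with $\gamma_2\in[j^1_Y,j^1_Yf]=[id_Y,f]$, as a check; and Corollary \ref{Naturality HOWP equivalence}(c) can be used if one prefers to phrase the transport of Whitehead-product sets along $i_Y$ within the category of suspensions.)

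The only part that is not a mechanical re-run of the proof of Theorem \ref{Thm Jn(X,A)} — and where I expect the real work — is the transport along $i_Y$: since $f$ need not be a cofibration there is no CW pair ``$(Y,X)$'' one can quote, so one must check, by induction along the defining pushouts, that $i_Y$ genuinely propagates through every fat wedge, product and pushout-along-a-cofibration of Gray's construction, compatibly with the filtration inclusions $I_n$ and with the hypotheses of Lemma \ref{Lem pushout}, and that $\mu_n$ acts on the coordinate injections as claimed. Simple connectivity of $X$ — hence of $Y$, $M_f$ and every $J_k(M_f,X)$ — keeps all these homotopy equivalences and Gray's identification $F_p\simeq J(M_f,X)$ unproblematic. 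Once this homotopy-invariance statement is secured, the rest is bookkeeping with Theorem \ref{Naturality HOWP}, Corollary \ref{Naturality HOWP equivalence} and Lemma \ref{Lem pushout}; the same argument also yields the corresponding naturality in $f$ used in the applications.
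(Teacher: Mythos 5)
Your proposal is correct in outline and shares the paper's overall skeleton (reduce to Porter's cofibration (\ref{Cof for Cartesian Pordu.}) on the fat wedge $T_1(Y,X,\dots,X)$, identify the new cell attachment via Lemma \ref{Lem pushout}, and place $\gamma_n$ in $[j^{n-1}_Y,j^{n-1}_Yf,\dots,j^{n-1}_Yf]$ by Theorem \ref{Naturality HOWP}(d)), but it handles the one genuinely nontrivial step --- replacing the non-suspension $M_f$ by $Y$ --- by a different device. You transport the defining pushout span of $J_n(M_f,X)$ along $i_Y$ (resp.\ $r_Y$) using the gluing lemma for pushouts along cofibrations, inductively; note that after composing with $\Phi_{n-1}$ the left square of your map of spans commutes only up to homotopy, so you need the homotopy-pushout version of the gluing lemma with a chosen homotopy. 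The paper instead constructs an explicit point-set model $J^f_n(Y,X)=(Y\times X^{n-1})/\!\sim$ with $f$-twisted identifications, together with a genuine (strictly well-defined, since $r_Yi_X=f$ on the nose) comparison map $\theta_n:J_n(M_f,X)\to J^f_n(Y,X)$, and proves $\theta_n$ is an equivalence by a five-lemma argument on the homology of the cofibration ladders plus simple connectivity; the suspension-input pushout square is then available verbatim for $J^f_n(Y,X)$. What each buys: the paper's explicit model discharges, in one concrete construction, precisely the transport-and-compatibility verification that you defer as ``the real work,'' and it makes the naturality statements in Lemma \ref{J(X,A)toJ(X',A')} immediate; your route avoids inventing a new model and, since the gluing lemma needs only honest homotopy equivalences of CW complexes rather than a homology-plus-Whitehead argument, uses simple connectivity less essentially. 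To be a complete proof your version still has to carry out the deferred step (coherent choice of homotopies in the span comparison and the identification of $\mu_nk_1$, $\mu_nk_r$ up to homotopy), but nothing there would fail. Two small slips worth fixing: simple connectivity of $Y$ is a hypothesis, not a consequence of that of $X$; and for $n=2$ the check against Theorem \ref{Gray's Thm J2(X,A)} gives $\gamma_2\in[id_Y,f]$ as a set of generalized Whitehead products, which for non-spheres may contain more than one class.
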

\begin{proof}
	Define $J^{f}_{n}(Y,X)=(Y\times X^{n-1})/\thicksim$, where the relations are given by
	\begin{align}
		& (y,x_1,\dots,x_{i-1},\ast,x_{i+1},\dots, x_{n-1})\thicksim (y,x_1,\dots,x_{i-1},x_{i+1},\ast,x_{i+2},\dots, x_{n-1}) \nonumber \\
		&(\ast,x_1,x_2,\dots, x_{n-1})\thicksim (f(x_1),x_2,\dots,x_{i-1},\ast,x_{i+1},\dots, x_{n-1}) \nonumber
	\end{align}
	for any $i=1,\dots, n$ (when $i=1$, the symbol $x_{i-1}=x_0$ in the following means the first coordinate in $Y$).
	$J^{f}_{1}(Y,X)=Y$, $J^{f}_{2}(Y,X)=(Y\times X)/ (f(x),\ast)\thicksim (\ast, x)$.
	
	We denote the element in $J^{f}_{n}(Y,X)$ by $\overline{(y,x_1,\dots,x_{n-1})}^f$.
	There is a pushout diagram
	\begin{align}
		\small{\xymatrix{
				T_1(Y,X,\dots,X)\ar@{^{(}->}[r]\ar[d]^{\nu^{f}_n}&Y\times X^{n-1}\ar[d]^{\Pi^{f}_{n}}\\
				J^f_{n-1}(Y,X)\ar@{^{(}->}[r]^{I^{f}_n} & J^f_{n}(Y,X)
		} }. \nonumber
	\end{align}
	where the following maps are natural
	\begin{align}
		\Pi^{f}_{n}:& (y,x_1,\dots, x_{n-1})\mapsto\overline{(y,x_1,\dots, x_{n-1})}^f \nonumber \\
		\nu^{f}_{n}:& (\ast,x_1,\dots, x_{n-1})\mapsto\overline{(f(x_1),\dots, x_{n-1})}^f \nonumber\\
		&(y,x_1,\dots, x_{n-1},\ast, x_{i+1}\dots, x_{i-1})\mapsto\overline{(y,x_1,\dots, x_{i-1},x_{i+1},\dots, x_{n-1})}^f \nonumber\\
		I^{f}_{n}: &(y,x_1,\dots, x_{n-2})\mapsto\overline{(y,x_1,\dots, x_{n-2},\ast)}^f \nonumber
	\end{align}
	We will complete the proof by showing  (a): $J_{n}(M_f, X)\simeq J^{f}_{n}(Y,X)$; (b): there is a principle cofibration sequence $\Sigma^{n-1}Y'\wedge X'^{\wedge(n-1)}\xrightarrow{\gamma_n}J^{f}_{n-1}(Y,X)\hookrightarrow J^{f}_{n}(Y,X)$ with $\gamma_n\in [j^{r-1}_{Y}, j^{r-1}_{Y}f, \cdots, j^{r-1}_{Y}f]$.
	
	Proof of  $(a)$.
	
	Let $\theta_{n}:J_{n}(M_f, X)\rightarrow J^{f}_{n}(Y,X)$,
	$\overline{(m,x_1,\dots,x_{n-1})}:\mapsto\overline{(r_{Y}(m),x_1,\dots,x_{n-1})}^f$. where  $r_Y$ is the homotopy equivalence which comes from (\ref{homotopy Y to Mf}).
	It is well-defined  and we will show that $\theta_n$ are homotopy equivalences for every $n\geq 1$ by induction on $n$.
	
	There is a  topological commutative ladder of the cofibration sequences
	\begin{align}
		\small{\xymatrix{
				J_{n-1}(M_f,X) \ar[r]^-{I_n}\ar[d]^{\theta_{n-1}}&J_{n}(M_f,X)\ar[r]^{proj.}\ar[d]^{\theta_n}&M_f\wedge X^{\wedge n}\ar[d]^{r_Y\wedge id_{X}^{\wedge(n-1)}}\\
				J^f_{n-1}(Y,X) \ar[r]^-{I^f_n}& J^f_{n}(Y,X)\ar[r]^{proj.} & Y\wedge X^{n}
		} }. \nonumber
	\end{align}
	Since $\theta_1=r_Y: M_f\rightarrow Y$  and $r_Y\wedge id_{X}^{\wedge(n-1)}$ (for any $n\geq 1$) are homotopy equivalent,  $\theta_n$ induces isomorphisms on homologies.  $\theta_n$ is also homotopy equivalent since all the spaces considered are  simply connected.
	
	Proof of  $(b)$.
	
	We have the following topological commutative diagram with natural maps
	\begin{align}
		\small{\xymatrix{
				\Sigma^{n-1}Y'\wedge X'^{\wedge(n-1)}  \ar[r]^-{W_n}&T_1(Y,X,\dots,X)\ar@{^{(}->}[r]\ar[d]^{\nu^f_n}&Y\times X^{n-1}\ar[d]^{\Pi^f_{n}}\\
				\Sigma^{n-1}Y'\wedge X'^{\wedge(n-1)}  \ar@{=}[u]\ar[r]^-{\gamma_n}& J^f_{n-1}(Y,X)\ar@{^{(}->}[r]^{I^f_n} & J^f_{n}(Y,X)
		} }. \nonumber
	\end{align}
	where   
	\begin{align*}
		\gamma_n=\nu^f_nW_n\in \nu^f_n[k_1,k_2,\dots,k_n]\!\subset\! [\nu^f_nk_1,\nu^f_nk_2,\dots,\nu^f_nk_n]\!=\![j^{r-\!1}_{Y}, j^{r-\!1}_{Y}\!f, \cdots, j^{r-\!1}_{Y}\!f]
	\end{align*}

	Since the top sequence is the principle cofibration sequence, so is the bottom sequence by the the same proof of Theorem \ref{Thm Jn(X,A)}.
	
\end{proof}

\begin{remark}
	(1)For $n=2$, $J_{2}(M_f,X)\simeq Y\cup_{[id_Y, f]}C(Y\wedge X')$ by the above theorem. This also looks very obvious from Theorem \ref{Gray's Thm J2(X,A)}.  B. Gray, J. Mukai  and so on, they always use this result without proof.
\end{remark}
(2) The homotopy equivalence $\theta_n$ is natural. That is  the
commutative diagram left induces the commutative diagram right in the following

$ \footnotesize{\xymatrix{
		X\ar[d]^{\mu_X}\ar[r]^-{f} & Y\ar[d]^{\mu_Y}\\
		X_1\ar[r]^-{f_1} & Y_1}}$;~~~~~~$  \small{\xymatrix{
		J_{n}(M_f,X) \ar[r]^-{J_{n}(\widehat{\mu}, \mu_X)}\ar[d]^{\theta_{n}}&J_{n}(M_{f_1},X_1)\ar[d]^{\theta_n}\\
		J^f_{n}(Y,X) \ar[r]^-{J^f_{n}(\mu_Y, \mu_X)}& J^{f_1}_{n}(Y_1,X_1)
} }$
\newline
where the definition of the  map $J^f_{n}(\mu_Y, \mu_X)$ is canonical and $\widehat{\mu}$ is induced by $\mu_X$ and $\mu_Y$. $\gamma_n$ is also natural in the sense that the following diagram commutes for $\mu_X=\Sigma\mu'_X: X=\Sigma X'\rightarrow X_1=\Sigma X'_1$, $\mu_Y=\Sigma\mu'_Y: Y=\Sigma Y'\rightarrow Y_1=\Sigma Y'_1$.
\begin{align}
	\small{\xymatrix{
			\Sigma^{n-1}Y'\wedge X'^{\wedge(n-1)} \ar[r]^-{\gamma_n}\ar[d]^{\Sigma^{n-1}\mu'_Y\wedge \mu_X'^{\wedge(n-1)}}&J^f_{n-1}(Y,X)\ar[r]^{I^f_n}\ar[d]^{J^f_{n-1}(\mu_Y,\mu_X)}&J^f_{n}(Y,X)\ar[d]^{J^f_{n}(\mu_Y,\mu_X)}\\
			\Sigma^{n-1}Y_1'\wedge X_1'^{\wedge(n-1)}  \ar[r]^-{\gamma_n}& J^{f_1}_{n-1}(Y_1,X_1)\ar[r]^{I^f_n} & J^{f_1}_{n}(Y_1,X_1)
	} }. \nonumber
\end{align}

\begin{lemma}\label{J(X,A)toJ(X',A')}
	Suppose the left  diagram  is commutative\\
	$\footnotesize{\xymatrix{
			X\ar[d]^{\mu_X}\ar[r]^-{f} & Y\ar[d]^{\mu_Y}\\
			X_1\ar[r]^-{f_1} & Y_1}}$;~~~~~~$ \footnotesize{\xymatrix{
			F_p\simeq  J(M_f,X)\ar[d]^{J(\widehat{\mu},\mu_X)}\ar[r]&M_f/X\simeq C_f  \ar[d]^{\bar{\mu}}\ar[r]^-{p}& \Sigma X\ar[d]^{\Sigma\mu_X}\\
			F_{p_1}\simeq J(M_{f_1},X_1)\ar[r]& M_{f_1}/X_1 \simeq C_{f_1} \ar[r]^-{p_1}&\Sigma X'}}$
	
	then it induces the right commutative diagrams on fibrations, where $\widehat{\mu}$ satisfies
	
	$ \footnotesize{\xymatrix{
			&Y\ar@/^0.5pc/[rrr]^{\mu_Y}\ar@{^{(}->}[r]_{\simeq}& M_f \ar[r]_{\widehat{\mu}}& M_{f_1}\ar[r]_{\simeq}& Y_1} }$. 
	
	Let~~~$J_{r}(M_f,X)\xrightarrow{J(\widehat{\mu},\mu_X)|_{J_{r}(M_f,X)}=J_{r}(\widehat{\mu},\mu_X)} J_{r}(M_{f_1},X_1)$ $(r\geq 1)$,
	
	then we have the following commutative diagram
	$$ \footnotesize{\xymatrix{
			Y\wedge X^{\wedge ^{n-1}} \ar[d]_{\simeq} \ar[r]^{\mu_Y\wedge\mu_X^{\wedge ^{n-1}}} & Y_1\wedge X_1^{\wedge ^{n-1}} \ar[d]_{\simeq}\\
			J_{n}(M_f,X)/J_{n-1}(M_f,X)\ar[r]^-{\overline{J_{n}(\widehat{\mu},\mu_X)}} & J_{n}(M_{f_1},X_1)/J_{n-1}(M_{f_1},X_1)} }$$
\end{lemma}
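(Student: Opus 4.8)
The plan is to produce the comparison map $\widehat\mu$ by hand on mapping cylinders, observe that $(\widehat\mu,\mu_X)$ is a map of CW pairs, and then read off all three assertions (the existence and compatibility of $\widehat\mu$, the ladder of fibration sequences, and the diagram on subquotients) from functoriality of the relative James construction together with the naturality statements already recorded above. Concretely, writing $M_f=(X\times I\sqcup Y)/\big((x,1)\sim f(x)\big)$ with $i_X\colon x\mapsto(x,0)$, the end inclusion $i_Y\colon Y\hookrightarrow M_f$, and the retraction $r_Y\colon(x,t)\mapsto f(x),\ y\mapsto y$, I would set
\[
\widehat\mu\colon M_f\to M_{f_1},\qquad \widehat\mu(x,t)=(\mu_X(x),t),\qquad \widehat\mu(y)=\mu_Y(y).
\]
Since $(x,1)\sim f(x)$ is carried to $(\mu_X(x),1)\sim f_1(\mu_X(x))$ and $f_1\mu_X=\mu_Y f$, this is a well-defined continuous base-point preserving map satisfying $\widehat\mu\,i_X=i_{X_1}\mu_X$, $\widehat\mu\,i_Y=i_{Y_1}\mu_Y$ (hence $r_{Y_1}\widehat\mu\,i_Y=\mu_Y$, which is the compatibility of $\widehat\mu$ demanded in the statement) and $\widehat\mu\big(i_X(X)\big)\subset i_{X_1}(X_1)$; thus $(\widehat\mu,\mu_X)\colon(M_f,X)\to(M_{f_1},X_1)$ is a map of pairs.

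A map of pairs induces the map of relative James constructions $\overline{(m,a_1,\dots,a_{r-1})}\mapsto\overline{(\widehat\mu(m),\mu_X(a_1),\dots,\mu_X(a_{r-1}))}$, which is well defined precisely because $\widehat\mu$ restricts to $\mu_X$ on the subspace and therefore sends each defining relation of $J_r(M_f,X)$ to one of $J_r(M_{f_1},X_1)$; this produces $J(\widehat\mu,\mu_X)$, its word-length preserving restrictions $J_r(\widehat\mu,\mu_X)$, and commuting squares with the inclusions $I_r$. On cofibers $\widehat\mu$ induces $\bar\mu\colon C_f\simeq M_f/X\to M_{f_1}/X_1\simeq C_{f_1}$ (it carries $i_X(X)$ into $i_{X_1}(X_1)$), and the square with the pinch maps commutes because $\widehat\mu$ restricts to $\mu_X\times\mathrm{id}_I$ on the cylinder $X\times I\subset M_f$, which induces $\Sigma\mu_X$ after collapsing; passing to homotopy fibers and invoking the naturality for maps of pairs of Gray's equivalence $F_p\simeq J(M_f,X)$ then yields the stated ladder of fibration sequences, with left-hand vertical $J(\widehat\mu,\mu_X)$.

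For the last diagram I would appeal to the naturality (noted above) of the homeomorphism $J_n(X,A)/J_{n-1}(X,A)\cong X\wedge A^{\wedge(n-1)}$: applied to $(\widehat\mu,\mu_X)$ it identifies $\overline{J_n(\widehat\mu,\mu_X)}$ with $\widehat\mu\wedge\mu_X^{\wedge(n-1)}\colon M_f\wedge X^{\wedge(n-1)}\to M_{f_1}\wedge X_1^{\wedge(n-1)}$. The unlabelled vertical equivalences in the statement are the composites $Y\wedge X^{\wedge(n-1)}\xrightarrow{i_Y\wedge\mathrm{id}}M_f\wedge X^{\wedge(n-1)}\xrightarrow{\cong}J_n(M_f,X)/J_{n-1}(M_f,X)$ (and the same with subscript $1$), so commutativity reduces to
\[
(\widehat\mu\wedge\mu_X^{\wedge(n-1)})\circ(i_Y\wedge\mathrm{id})=(\widehat\mu\,i_Y)\wedge\mu_X^{\wedge(n-1)}=(i_{Y_1}\mu_Y)\wedge\mu_X^{\wedge(n-1)}=(i_{Y_1}\wedge\mathrm{id})\circ(\mu_Y\wedge\mu_X^{\wedge(n-1)}),
\]
which holds by functoriality of the smash product together with $\widehat\mu\,i_Y=i_{Y_1}\mu_Y$.

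\textbf{The main obstacle.} I do not expect an essential difficulty in this argument — it is a matter of chasing the explicit point-set models. The one place that genuinely needs care, rather than ingenuity, is checking that Gray's homotopy equivalence $F_p\simeq J(M_f,X)$ and the homeomorphism on subquotients are natural for maps of pairs on the nose (and not merely up to an unspecified homotopy), since it is precisely this naturality that licenses the identification of the induced maps on fibers and on subquotients used above. If one prefers to avoid leaning on the naturality of Gray's equivalence, the subquotient square can instead be verified directly from the pushout presentation of $J_n(X,A)$ and the inclusion $i_Y$, which is the route I would actually carry out in detail.
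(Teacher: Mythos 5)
Your proposal is correct and follows essentially the same route as the paper, whose entire proof is the one-line remark that the lemma "is easily obtained from naturality of the relative construction $J(X,A)$ and $J_n(X,A)$"; you have simply written out explicitly what that naturality amounts to (the point-set definition of $\widehat\mu$, the induced map of pairs, and the identification of $\overline{J_n(\widehat\mu,\mu_X)}$ on subquotients). The details you supply, in particular the reduction of the last square to $\widehat\mu\, i_Y = i_{Y_1}\mu_Y$, are exactly the verifications the paper leaves implicit.
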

\begin{proof}
	The above lemma is easily obtained from naturality of the relative construction $J(X,A)$ and $J_n(X,A)$.
\end{proof}

The following  lemma  comes from \cite[Lemma 2.3]{ZP A_3^2}, which generalizes the Lemma 4.4.1 of \cite{JXYang}.

\begin{lemma}\label{partial calculate1}
	Let $f: X\rightarrow Y$ be a map , then for the following fibration sequence,
	$$\Omega\Sigma X\xrightarrow{\partial} J(M_{f}, X)\simeq F_p \rightarrow C_f\xrightarrow{p} \Sigma X$$
	there exists homotopy-commutative diagram,
	$$ \footnotesize{\xymatrix{
			X\ar[d]_{\Omega\Sigma } \ar[r]^{f} & Y \ar@{_{(}->}[d]_{j_p}\\
			\Omega\Sigma X \ar[r]^-{\partial} & J(M_f, X)\simeq F_p } }$$
\end{lemma}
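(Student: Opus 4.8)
Because the assertion is only about homotopy-commutativity, I would work with the strict pullback model of the homotopy fibre of the pinch map rather than with $J(M_f,X)$ directly. Write $P\Sigma X=\{\gamma\colon[0,1]\to\Sigma X\mid\gamma(0)=*\}$ with endpoint evaluation $\mathrm{ev}_1$, and set $F_p=C_f\times_{\Sigma X}P\Sigma X=\{(c,\gamma)\mid\gamma(1)=p(c)\}$. In this model: the connecting map is the fibre inclusion $\partial\colon\Omega\Sigma X\to F_p$, $\omega\mapsto(*,\omega)$; the map $e\colon X\to\Omega\Sigma X$ labelled $\Omega\Sigma$ in the statement (the adjoint of $\mathrm{id}_{\Sigma X}$) is $x\mapsto\omega_x$ with $\omega_x(t)=[x,t]$; and, since $p$ collapses $Y$ so that $p\circ j_f$ is the constant map, $j_f\colon Y\hookrightarrow C_f$ admits the canonical lift $\widetilde{\jmath}\colon Y\to F_p$, $y\mapsto(j_f(y),\mathrm{const}_*)$. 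The first point to settle is that $\widetilde{\jmath}\simeq j_p$; this comes from the explicit form of Gray's equivalence $J(M_f,X)\simeq F_p$ on the bottom filtration $J_1(M_f,X)=M_f$, under which it restricts to the canonical lift of the quotient $M_f\to M_f\cup_X CX=C_f$, so that postcomposing with the equivalence $Y\xrightarrow{\simeq}M_f$ of \eqref{homotopy Y to Mf} recovers exactly $\widetilde{\jmath}$.

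Granting this, the homotopy is produced explicitly using the cone coordinate of $C_f=Y\cup_fCX$. Model the cone as $CX=(X\times[0,1])/(X\times\{1\}\cup\{*\}\times[0,1])$, glued to $Y$ along $X\times\{0\}$ by $f$, so that in $C_f$ one has $(x,0)=j_f f(x)$, $(x,1)=*$, and $p(x,s)=[x,s]$. Define $H\colon X\times[0,1]\to F_p$ by
\[
H(x,s)=\bigl((x,s)\in C_f,\ t\mapsto[x,ts]\in\Sigma X\bigr).
\]
One verifies that $H$ is well defined and continuous (the path starts at $*$ and ends at $[x,s]=p(x,s)$), preserves base points (for $x=*$ both coordinates are constant), and satisfies $H(\cdot,0)=\widetilde{\jmath}\circ f\simeq j_p\circ f$ (at $s=0$ the path is constant and the point is $j_f f(x)$) and $H(\cdot,1)=\partial\circ e$ (at $s=1$ the point is $*$ and the path is $\omega_x$). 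This $H$ is the desired filler of the square.

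These checks are routine; the genuine care goes into two places. First, one must match orientation conventions so that $H(\cdot,1)$ is literally $\partial\circ e$ and not $\partial$ composed with loop reversal — the sign convention for $\partial$ should be fixed to agree with the adjunction used for $e$, and in any event the discrepancy is absorbed by the phrase ``homotopy-commutative''. Second, and more substantively, one needs the identification $j_p\simeq\widetilde{\jmath}$: a priori $j_p$ is only known to be \emph{some} lift of $j_f$, and distinct lifts differ by a map $Y\to\Omega\Sigma X$ under the fibrewise $\Omega\Sigma X$-action, so the conclusion genuinely uses that Gray's equivalence picks out the canonical lift on $J_1$. I expect this identification to be the only step that must be argued rather than merely computed; alternatively the whole lemma may simply be quoted, as it is, from \cite[Lemma 2.3]{ZP A_3^2} (refining \cite[Lemma 4.4.1]{JXYang}).
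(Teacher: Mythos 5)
The paper does not actually prove this lemma: it is imported verbatim from \cite[Lemma 2.3]{ZP A_3^2} (generalizing \cite[Lemma 4.4.1]{JXYang}), so there is no in-text argument to compare against. Your proposal supplies the standard proof that the cited source is based on, and it is essentially correct: in the strict pullback model $F_p=C_f\times_{\Sigma X}P\Sigma X$ the homotopy $H(x,s)=\bigl((x,s),\,t\mapsto[x,ts]\bigr)$ is well defined, pointed, and interpolates between $\widetilde{\jmath}\circ f$ and $\partial\circ e$ exactly as you say. You have also correctly isolated the one step that carries real content, namely $j_p\simeq\widetilde{\jmath}$. Since the paper \emph{defines} $j_p$ as the composite $Y\hookrightarrow M_f=J_1(M_f,X)\hookrightarrow J(M_f,X)\simeq F_p$, this reduces to checking that Gray's equivalence restricted to the first filtration $J_1(M_f,X)=M_f$ is homotopic to the canonical (constant-path) lift of $M_f\to M_f/X\simeq C_f$; that is indeed how Gray's map is built, but it is the one assertion in your write-up that is quoted rather than verified, and it is precisely what the cited lemma packages. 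Two distinct lifts of $j_f$ can genuinely differ by a map $Y\to\Omega\Sigma X$, so this cannot be waved away. One small correction: a mismatch of loop orientation is \emph{not} ``absorbed by homotopy-commutativity'' --- $\partial\circ e$ and $\partial$ precomposed with loop reversal differ by a sign on homotopy groups. It is harmless here only because the paper uses the lemma to compute kernels and cokernels of $\partial_\ast$, for which the sign is irrelevant; you should state that rather than suggest the ambiguity disappears.
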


\begin{lemma}\label{lemm Suspen Fp}
	Let $X\xrightarrow{f}Y\stackrel{j_f}\hookrightarrow C_f\xrightarrow{p}\Sigma X$ be a cofibration sequence of CW complexes. Consider the fibration sequence  $\Omega\Sigma^{k+1}X\xrightarrow{\partial_k}F_{\Sigma^kp} \xrightarrow{i_k}\Sigma^k C_f\xrightarrow{\Sigma^kp}\Sigma^{k+1}X$.

	If $i_{k\ast}:[\Sigma^k Y,F_{\Sigma^k p}]\rightarrow [\Sigma^kY, \Sigma^kC_f]$ is bijective, then there is a map $\phi_k$ such that
	
	1) the following diagram is homotopy commutative:

	$$ \footnotesize{\xymatrix{
			Y \ar@{^{(}->}[r]^-{j_p}\ar@{^{(}->}[d]_{\Omega^k\Sigma^k } & F_p \ar[d]^{\phi_k}\\
			\Omega^k\Sigma^k Y \ar[r]^-{\Omega^kj_{\Sigma^kp}} & \Omega^kF_{\Sigma^kp} } }$$
\end{lemma}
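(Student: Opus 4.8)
The plan is to construct $\phi_k$ by exploiting the hypothesis that $i_{k\ast}\colon [\Sigma^k Y, F_{\Sigma^k p}]\to[\Sigma^k Y,\Sigma^k C_f]$ is bijective, and then to verify homotopy commutativity of the square by pushing everything down into $[\Sigma^k Y,\Sigma^k C_f]$, where the iso lets us check equality of the two composites after postcomposing with $i_k$. First I would recall the basic adjunction: a map $F_p\to\Omega^k F_{\Sigma^k p}$ is the same as a map $\Sigma^k F_p\to F_{\Sigma^k p}$, and the natural candidate for the latter is the map on homotopy fibers induced by the commutative square relating $\Sigma^k C_f\xrightarrow{\Sigma^k p}\Sigma^{k+1}X$ to itself via the suspension/evaluation maps; concretely, the suspension $\Sigma^k\colon F_p\to \Omega^k\Sigma^k F_p$ composed with $\Omega^k F_{\Sigma^k(\cdot)}$ applied to the fibration $F_p\to C_f\to\Sigma X$ gives a canonical map $F_p\to\Omega^k F_{\Sigma^k p}$. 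I would take $\phi_k$ to be (a representative of) this canonical comparison map, so that $\phi_k$ is already defined on all of $F_p$ and not just on $Y$.

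The second step is to pin down the top and bottom edges of the square. The bottom edge $\Omega^k j_{\Sigma^k p}\colon \Omega^k\Sigma^k Y\to\Omega^k F_{\Sigma^k p}$ is $\Omega^k$ applied to the inclusion of $\Sigma^k Y$ into the homotopy fiber of $\Sigma^k p$; this makes sense because $\Sigma^k j_f\colon \Sigma^k Y\to\Sigma^k C_f$ followed by $\Sigma^k p$ is null (it is the suspension of a cofibration composite), so $\Sigma^k j_f$ lifts canonically to $F_{\Sigma^k p}$, giving $j_{\Sigma^k p}$. The left edge is the iterated suspension-loop unit $Y\to\Omega^k\Sigma^k Y$, and the top edge is $j_p\colon Y\hookrightarrow F_p$. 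So the square asks that the composite $Y\xrightarrow{j_p}F_p\xrightarrow{\phi_k}\Omega^k F_{\Sigma^k p}$ agree with $Y\to\Omega^k\Sigma^k Y\xrightarrow{\Omega^k j_{\Sigma^k p}}\Omega^k F_{\Sigma^k p}$.

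The third step is the verification, using the bijectivity hypothesis. By adjunction, both composites $Y\to\Omega^k F_{\Sigma^k p}$ correspond to elements of $[\Sigma^k Y, F_{\Sigma^k p}]$, and by hypothesis $i_{k\ast}$ is injective on this set, so it suffices to show that after composing with $i_k\colon F_{\Sigma^k p}\to\Sigma^k C_f$ the two resulting maps $\Sigma^k Y\to\Sigma^k C_f$ agree. For the first composite, $i_k\phi_k$ adjointly is the natural projection $F_p\to C_f$ suspended $k$ times (by construction of $\phi_k$ as the fiber comparison map, $i_k$ absorbs it back to the evaluation), so $i_k\circ(\text{adjoint of }\phi_k j_p)\simeq \Sigma^k(\text{proj}\circ j_p)=\Sigma^k j_f$. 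For the second composite, $i_k\circ\Omega^k j_{\Sigma^k p}$ adjointly is just $i_k j_{\Sigma^k p}\simeq \Sigma^k j_f$ by the very definition of $j_{\Sigma^k p}$ as a lift of $\Sigma^k j_f$. Hence the two agree in $[\Sigma^k Y,\Sigma^k C_f]$, and injectivity of $i_{k\ast}$ upgrades this to equality in $[\Sigma^k Y, F_{\Sigma^k p}]$, i.e. homotopy commutativity of the square. The main obstacle I expect is bookkeeping: making the naturality square that defines $\phi_k$ (as a map of homotopy fibers over the evaluation map $\Sigma^k\Omega^k\Sigma^k X\to\Sigma^{k+1}X$ versus $\mathrm{id}$) precise enough that the identifications $i_k\phi_k\simeq \Sigma^k(\mathrm{proj})$ and $i_k j_{\Sigma^k p}\simeq \Sigma^k j_f$ hold strictly up to the homotopies one needs, rather than just "morally"; once those two identifications are nailed down, the bijectivity hypothesis does the rest and the argument is short.
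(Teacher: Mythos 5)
Your proposal is correct and follows essentially the same route as the paper: you construct a lift $T\colon \Sigma^k F_p\to F_{\Sigma^k p}$ of $\Sigma^k i$ through $i_k$ (using $\Sigma^k p\circ\Sigma^k i\simeq 0$), set $\phi_k=\Omega^k T\circ(\text{unit})$, and then use injectivity of $i_{k\ast}$ together with the identifications $i_kT\Sigma^k j_p\simeq\Sigma^k j_f\simeq i_kj_{\Sigma^k p}$ to conclude $T\Sigma^k j_p\simeq j_{\Sigma^k p}$, which is precisely the paper's argument. The remaining bookkeeping you flag (adjunction and naturality of the unit) is handled in the paper exactly as you anticipate.
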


2) the following diagram with rows fibration sequence is homotopy commutative:

\begin{align}
	\small{\xymatrix{
			\Omega\Sigma X \ar[r]^-{\partial}\ar[d]^{\Omega(\Omega^{k}\Sigma^k)}&F_p\ar[r]^{i}\ar[d]^{\phi_k}& C_f\ar[d]^{\Omega^{k}\Sigma^k}\ar[r]^{p}& \Sigma X\ar[d]^{\Omega^{k}\Sigma^k}\\
			\Omega^{k+1}\Sigma^{k+1}X  \ar[r]^-{\Omega^{k}\partial_k}&\Omega^{k}F_{\Sigma^kp} \ar[r]^{\Omega^{k}i_{k}} &\Omega^{k}\Sigma^k C_f \ar[r]^-{\Omega^{k}\Sigma^k p}& \Omega^{k}\Sigma^{k+1} X
	} }. \nonumber
\end{align}
$\Omega^k\Sigma^k$ represents the $k$-fold canonical inclusions.

\begin{proof}
	Consider the following  left diagram where the bottom row is a fibration sequence
	
	$\small{\xymatrix{
			\Sigma^{k}F_p \ar[r]_-{\Sigma^{k}i}&\Sigma^{k}C_f\ar[r]_{\Sigma^{k}p}\ar[d]^{id}&\Sigma^{k+1}X\ar[d]^{id}\\
			F_{\Sigma^{k}p} \ar[r]^-{i_k}& \Sigma^{k}C_f\ar[r]^{\Sigma^{k}p} & \Sigma^{k+1}X
	} }.$ ~~~~~$\small{\xymatrix{
			\Sigma^{k}Y\ar@/^0.7pc/[rr]^{\Sigma^{k}j_f}\ar[r]_-{\Sigma^k j_p}\ar@{=}[d]&\Sigma^{k}F_p \ar[r]_-{\Sigma^{k}i}\ar[d]^{T}&\Sigma^{k}C_f\ar[d]^{id}\\
			\Sigma^{k}Y\ar@/_0.7pc/[rr]_{\Sigma^{k}j_f}\ar[r]^-{j_{\Sigma^kp}}&F_{\Sigma^{k}p} \ar[r]^-{i_k}& \Sigma^{k}C_f
	} }. $
	
	By $\Sigma^{k}p\Sigma^{k}i=0$, there exists a map $T:\Sigma^{k}F_{p}\rightarrow F_{\Sigma^{k}p}$ such that the left square of the first diagram (i.e., the right square of the second diagram) is homotopy commutative.
	
	By the second diagram, we get $i_kT\Sigma^k j_p=i_kj_{\Sigma^kp}$, then we get $T\Sigma^k j_p=j_{\Sigma^kp}$ since  $i_{k\ast}:[\Sigma^k Y,F_{\Sigma^k p}]\rightarrow [\Sigma^kY, \Sigma^kC_f]$ is bijective.

	Then one has the following  diagrams:
	\begin{align} \small{\xymatrix{
				Y \ar[d]^{\Omega^k\Sigma^k}\ar[r]^-{j_p}& F_p\ar[d]^{\Omega^k\Sigma^k}\\
				\Omega^k\Sigma^{k}Y \ar[d]^{id}\ar[r]^-{\Omega^k\Sigma^{k}j_p}&\Omega^k\Sigma^{k}F_p\ar[d]^{\Omega^k T}\\
				\Omega^k\Sigma^{k}Y \ar[r]^-{\Omega^kj_{\Sigma^kp}}& \Omega^kF_{\Sigma^{k}p}
		} }. \nonumber
	\end{align}
	The top square is commutative by the fact that the functor $\Omega^k$ is the adjoint of $\Sigma^k$
	and the bottom square is homotopy commutative by the naturality of the functor $\Omega^k$. The
	map $\phi_k$ is the composition of two right vertical maps. This two squares give the required
	commutative diagram in 1).
	
	Similarly the another commutative square involving the map $\phi_k$ gives the commutativity of the middle square in 2).
	
	The commutativity of the left square in 2) follows from the fact that the connecting
	map of $\Omega^k F_{\Sigma^{k}p}\xrightarrow{\Omega^ki_k} \Omega^k\Sigma^{k}C_f\xrightarrow{\Omega^k \Sigma^kp}\Omega^k \Sigma^{k+1}X$ is $\Omega^k$ of the connecting map of $F_{\Sigma^kp} \xrightarrow{i_k}\Sigma^k C_f\xrightarrow{\Sigma^kp}\Sigma^{k+1}X$.
\end{proof}

Now we give the  steps to compute the homotopy group $\pi_{k}$ of the mapping cone $C_f$ for  a map $X\xrightarrow{f}Y$, where $X,Y$ are suspensions
\begin{enumerate}
	\item [\textbf{Step 1:}]	Consider the fibration sequence  $\Omega\Sigma X\xrightarrow{\partial} F_p\rightarrow C_f\xrightarrow{p}\Sigma X$, where $F_p\simeq J(M_f,X)$ is the 
	homotopy fiber of the pinch map $p$ and analysis the homotopy type of the skeleton  $Sk_{m}(F_p)\simeq J_r(M_f,X)~(m>k)$;
	\item [\textbf{Step 2:}]	Compute the cokernel $Coker (\partial_{k})_{\ast}$ and kernel  $Ker(\partial_{k-1})_{\ast}$ in the following induced exact sequence 
	\begin{align*}
		\pi_{k+1}(X)\xrightarrow{(\partial_{k})_{\ast}}\pi_{k}(F_p)\rightarrow \pi_{k}(C_f)\xrightarrow{p_{\ast}} \pi_{k}(X)\xrightarrow{(\partial_{k-1})_{\ast}} \pi_{k-1}(F_p)
	\end{align*}
	where $\pi_{k}(F_p)=\pi_{k}(Sk_{m}(F_p))\cong \pi_{k}(J_r(M_f,X))$;
	\item [\textbf{Step 3:}]	Determine the group structure of $\pi_k(C_f)$ from the following short exact sequence 
	\begin{align*}
		0\rightarrow  Coker (\partial_{k})_{\ast}\rightarrow \pi_{k}(C_f)\rightarrow Ker(\partial_{k-1})_{\ast}\rightarrow  0.
	\end{align*}
\end{enumerate}

\section{Application: compute $\pi_k(P^{3}(2^r)), k=5,6$}
\label{sec: 4}

In this section we compute the homotopy groups of the Elementary Moore spaces  $\pi_{k}(P^{3}(2^r)), k=5,6$ for all $r\geq 1$ under 2-localization. We should note that $\pi_{5}(P^{3}(2))\cong \Z_2\oplus\Z_2\oplus\Z_2$ is  given by J. Mukai \cite{Mukai4} and J. Wu \cite{WJ Proj plane}.

The following generators of homotopy groups of spheres after localization at 2  come from \cite{Toda}.
$\iota_n=[id]\in\pi_n(S^n)$; $\pi_{3}(S^2)=\Z_{(2)}\{\eta_2\}$;  $\pi_{n+1}(S^n)=\Z_2\{\eta_n\} (n\geq 3)$;
$\pi_{n+2}(S^n)=\Z_2\{\eta_n^2\} (n\geq 3)$; $\pi_{5}(S^2)=\Z_2\{\eta_2^3\}$; $\pi_{6}(S^3)=\Z_4\{\nu'\}$;  $\pi_{7}(S^4)=\Z_4\{\Sigma\nu'\}\oplus \Z_{(2)}\{\nu_4\}$; $\pi_{6}(S^2)=\Z_4\{\eta_2\nu'\}$;  $\pi_{7}(S^3)=\Z_2\{\nu'\eta_6\}$, where $\Z_{(2)}$ denotes the $2$-local integers.

Let $j^{n}_1$ and $j^m_2$ be the inclusions of $S^{n}$ and  $S^m$ respectively into $S^{n}\vee S^m$;   $q^{n}_1$ and  $q^m_2$ be the projections from $S^{n}\vee S^m$  to  $S^{n}$ and $S^m$ respectively.

Denote the map $f:S^n\vee S^m\rightarrow  X$ satisfying $fj_1^n=f_1$ and $fj_2^m=f_2$ by $(f_1, f_2)$ and the map $j_1^n f q_1^n+j_2^m g q_2^m: S^n\vee S^m\rightarrow  S^n\vee S^m$  by $f\vee g$.

There is a canonic cofibration sequence for mod $2^r$ Moore space.
\begin{align}
	S^2\xrightarrow{2^{r}\iota_2} S^{2}\xrightarrow{i_{r}} P^3(2^r)\xrightarrow{p_{r}}  S^{3} \label{Cofiberation for Mr^k}
\end{align}
Let $\Omega S^{3}\xrightarrow{\partial_r}  F_{p_{r}}\xrightarrow{i_{p_r}} P^{3}(2^r)\xrightarrow{p_{r}}  S^{3}$ be the homotpy fiber sequence.
By Theorem \ref{Thm Jn(Mf,X)},  we get $Sk_{6}(F_{p_{r}})=Sk_{7}(F_{p_{r}})\simeq J_{3}(M_{2^r\iota_2}, S^2)\simeq S^2\cup_{\gamma_2}CS^3\cup_{\gamma_3}CS^5$.

$\gamma_2=[\iota_2, 2^r\iota_2]=2^r[\iota_2, \iota_2]$. Since  $[\iota_2, \iota_2]=2\eta_2$, $\gamma_2=[\iota_2, 2^r\iota_2]= 2^{r+1}\eta_2$. Thus 
$Sk_4(F_{p_{r}})\simeq S^2\cup_{2^{r+1}\eta_2}CS^3$.

Let $L^4_m=S^2\cup_{2^m\eta_2}CS^3 (m\geq 1)$, then $L^4_0=\mathbb{C}P^2$ and $Sk_4(F_{p_{r}})\simeq L^4_{r+1}$. Let
$j^L_{m}: S^2\hookrightarrow L^4_m$ be the canonical inclusion. Then 
\begin{align*}
	\gamma_3\in [j^L_{r+1}, j^L_{r+1}(2^{r}\iota_2). j^L_{r+1}(2^r\iota_2)]\subset \pi_5(L_{r+1}^4).
\end{align*}
In order to simplify the notion, sometimes the inclusions $S^n\hookrightarrow Sk_k(F_p)$ and $S^n\hookrightarrow Sk_k(F_p)\hookrightarrow F_p$, we will use the same symbol.
\subsection{Generators of  $\pi_{k}(L_m^4)$, $k=5,6$}

There is a cofibration sequence $S^3\xrightarrow{2^{m}\eta_2}S^2 \stackrel{j^L_{m}}\hookrightarrow L^4_{m}\xrightarrow{p^{L}_{m}}S^4$ and the following fibration sequence
$\Omega S^4\xrightarrow{\partial^L_{m}} F_{p^{L}_{m}}\xrightarrow{\tau_{m}^L}  L^4_{m}\xrightarrow{p^{L}_{m}}S^4$. $Sk_6(F_{p^{L}_{m}})\simeq J_{2}(M_{2^{m}\eta_2}, S^3)=S^2\cup_{[\iota_2,2^m\eta_2]}CS^4=S^2\vee S^5$, since $[\iota_2,\eta_2]=0$ (p.76 of \cite{Toda Whitehead Prod.} ). Hence there are  inclusions $j_{m}^{F}: S^2\vee S^5\simeq  Sk_{6}(F_{p^{L}_{m}})\hookrightarrow F_{p^{L}_{m}}$, 
$j_{m}^{S^5}=j_{m}^{F}j_2^5: S^5\hookrightarrow  F_{p^{L}_{m}}$ and 
$j_{p_m^L}=j_{m}^{F}j_1^2: S^2\hookrightarrow  F_{p^{L}_{m}}$.

\begin{lemma}\label{lem:pi5(Lm4)}
	\begin{align}
		\pi_5(L_m^4)=  \left\{
		\begin{array}{ll}
			\Z_{(2)}\{\beta_0\}, & \hbox{$m=0$;} \\
			\Z_{(2)}\{\beta_1\}\oplus\Z_4\{\tilde{\eta}_4\}, & \hbox{$m=1$;} \\
			\Z_{(2)}\{\beta_m\}\oplus\Z_2\{j_{m}^L\eta_2^3\}\oplus\Z_2\{\tilde{\eta}_4\}, & \hbox{$m>1$.}
		\end{array}
		\right. \label{pi5(Lm)}
	\end{align}
	where $\beta_m=\tau_m^Lj_m^{S^5}$, $\tilde{\eta}_4$ is a lift of $\eta_4$, i.e., $p_{m\ast}^{L}\tilde{\eta}_4=\eta_4$.
\end{lemma}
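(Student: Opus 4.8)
The plan is to run the homotopy exact sequence of the fibration $\Omega S^4 \xrightarrow{\partial^L_m} F_{p^L_m} \xrightarrow{\tau^L_m} L^4_m \xrightarrow{p^L_m} S^4$ in degree $5$, using the identification $Sk_6(F_{p^L_m}) \simeq S^2 \vee S^5$ from Theorem~\ref{Thm Jn(Mf,X)} already established in the preamble. Since $\pi_5(F_{p^L_m}) = \pi_5(Sk_6(F_{p^L_m})) = \pi_5(S^2 \vee S^5)$, I would first compute this group: by the Hilton--Milnor theorem (or directly, since we are $2$-locally and in a low range), $\pi_5(S^2 \vee S^5) \cong \pi_5(S^2) \oplus \pi_5(S^5) \oplus (\text{Whitehead product terms})$; the relevant basic products land in degree $\geq \pi_5$ of $S^6$ and higher, so $\pi_5(S^2 \vee S^5) \cong \pi_5(S^2) \oplus \Z_{(2)}\{j_m^{S^5}\} = \Z_2\{j_{p_m^L}\eta_2^3\} \oplus \Z_{(2)}\{j_m^{S^5}\}$, using $\pi_5(S^2) = \Z_2\{\eta_2^3\}$ from Toda's generators listed above. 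Pushing forward along $\tau^L_m$ gives the candidate summands $\Z_2\{j_m^L\eta_2^3\}$ (noting $\tau^L_m j_{p^L_m} = j^L_m$) and $\Z_{(2)}\{\beta_m\}$ with $\beta_m = \tau^L_m j_m^{S^5}$.

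Next I would analyze the exact sequence
\begin{align*}
\pi_6(S^4) \xrightarrow{(\partial^L_m)_*} \pi_5(F_{p^L_m}) \xrightarrow{(\tau^L_m)_*} \pi_5(L^4_m) \xrightarrow{(p^L_m)_*} \pi_5(S^4) \xrightarrow{(\partial^L_m)_*} \pi_4(F_{p^L_m}).
\end{align*}
By Lemma~\ref{partial calculate1} the connecting map $\partial^L_m$ factors through the attaching map $2^m\eta_2 : S^3 \to S^2 \hookrightarrow F_{p^L_m}$; more precisely $\partial^L_m \circ (\text{can}: S^3 \to \Omega S^4)$ equals $j_{p^L_m} \circ 2^m\eta_2$ up to the appropriate adjunction. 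This controls both the image of $(\partial^L_m)_*$ in degree $5$ and its kernel in degree $4$. For the image: $\pi_6(S^4) = \Z_2\{\eta_4^2\}$, and $(\partial^L_m)_*(\eta_4^2)$ is computed by composing the (adjointed) $2^m\eta_2$ with $\eta^2$, landing in $\pi_5(S^2) = \Z_2\{\eta_2^3\}$; the outcome is $2^m \eta_2^3$ (up to a unit), which is $0$ when $m \geq 1$ and nonzero (the generator) when $m=0$. This is exactly what produces the three cases: for $m=0$ the $\Z_2\{\eta_2^3\}$-part is killed, leaving only $\Z_{(2)}\{\beta_0\}$; for $m \geq 1$ it survives. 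For the lift $\tilde\eta_4$: since $(p^L_m)_* : \pi_5(L^4_m) \to \pi_5(S^4) = \Z_2\{\eta_4\}$ must be surjective (one checks $\eta_4$ lifts because the obstruction $\partial^L_m \circ \eta_4 \in \pi_4(F_{p^L_m})$ vanishes — again via Lemma~\ref{partial calculate1}, the obstruction is detected by $2^m\eta_2 \circ \eta_3 = 2^m \eta_2\eta_3$, which is $2$-divisible hence zero in $\pi_4(S^2)=\Z_2$ precisely when... here one must be careful), there is a class $\tilde\eta_4$ with $p^L_{m*}\tilde\eta_4 = \eta_4$.

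The main obstacle, and where the three-way case split really lives, is determining the \emph{order} of $\tilde\eta_4$ and the extension problem in the short exact sequence $0 \to \mathrm{Coker}(\partial^L_m)_* \to \pi_5(L^4_m) \to \Z_2\{\eta_4\} \to 0$. The claim is that $\tilde\eta_4$ has order $4$ when $m=1$ but order $2$ when $m > 1$ — so the extension is non-split for $m=1$. I expect to resolve this by computing $2\tilde\eta_4 \in \mathrm{Coker}(\partial^L_m)_* = \Z_2\{j_m^L\eta_2^3\}$ (for $m>1$) directly: $2\tilde\eta_4 = j^L_m \circ (\text{some element of } \pi_5(S^2))$, and this element is governed by the attaching-map data, i.e. by a Toda-bracket-type relation $\langle \eta_4, 2, 2^m\eta_2 \rangle$ or the secondary composition controlling $2 \cdot \tilde\eta_4$. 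For $m=1$ the relevant bracket forces $2\tilde\eta_4 = j^L_1\eta_2^3 \neq 0$, but since for $m=1$ the group $\pi_5(L^4_1)$ has no separate $\Z_2\{j_1^L\eta_2^3\}$ summand (it has been absorbed: $j_1^L\eta_2^3 = 2\tilde\eta_4$), we get $\Z_{(2)}\{\beta_1\} \oplus \Z_4\{\tilde\eta_4\}$; for $m > 1$ the bracket indeterminacy or a divisibility argument shows $2\tilde\eta_4 = 0$, giving the split form with a separate $\Z_2$. Throughout I would lean on Toda's composition methods and the explicit low-dimensional stable/unstable stems, and on the naturality of the whole fibration setup (Lemma~\ref{J(X,A)toJ(X',A')}) to compare across different $m$ via the degree-$2$ self-map of $S^3$.
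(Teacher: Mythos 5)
Your setup coincides with the paper's own proof: both run the homotopy exact sequence of $\Omega S^4\xrightarrow{\partial^L_m}F_{p^L_m}\xrightarrow{\tau^L_m}L^4_m\xrightarrow{p^L_m}S^4$, identify $\pi_5(F_{p^L_m})\cong\pi_5(S^2\vee S^5)=\Z_2\{j_1^2\eta_2^3\}\oplus\Z_{(2)}\{j_2^5\iota_5\}$ via $Sk_6(F_{p^L_m})\simeq S^2\vee S^5$, and use Lemma~\ref{partial calculate1} to reduce $(\partial^L_m)_{4\ast}$ and $(\partial^L_m)_{5\ast}$ to composition with $2^m\eta_2$, so that both vanish for $m\geq 1$ (since $2^m\eta_2\circ\eta_3=2^m\eta_2^2=0$ and $2^m\eta_2\circ\eta_3^2=2^m\eta_2^3=0$) and are nontrivial for $m=0$. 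Up to the resulting short exact sequence
\begin{align*}
0\rightarrow \Z_2\{j_m^L\eta_2^3\}\oplus\Z_{(2)}\{\beta_m\}\rightarrow\pi_5(L_m^4)\xrightarrow{p^L_{m\ast}}\Z_2\{\eta_4\}\rightarrow 0\qquad(m\geq 1)
\end{align*}
your argument is correct and is exactly the paper's.

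The gap is at the one step that actually produces the three-way case split: resolving this extension, i.e.\ determining $2\tilde{\eta}_4$. You correctly identify that $2\tilde{\eta}_4$ is governed by a secondary composition of the type $\{2^m\eta_2,\eta_3,2\iota_4\}\subset\pi_5(S^2)$, but you do not compute it: you assert ``the relevant bracket forces $2\tilde{\eta}_4=j_1^L\eta_2^3$'' for $m=1$ and ``a divisibility argument shows $2\tilde{\eta}_4=0$'' for $m>1$ without proof, and you explicitly flag your own uncertainty in the lifting discussion. Since everything else in the lemma is routine, this unproved assertion is the entire content of the $m=1$ versus $m>1$ distinction, so as written the proof is incomplete. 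The computation is genuinely nontrivial (for $m=1$ one can use $\{2\eta_2,\eta_3,2\iota_4\}\supset\eta_2\circ\{2\iota_3,\eta_3,2\iota_4\}\ni\eta_2\eta_3^2=\eta_2^3$ together with the vanishing of the indeterminacy, while $m\geq 2$ requires a separate factorization argument, with $m=2$ needing extra care because $\eta_2\eta_3\neq 0$). The paper sidesteps all of this by quoting Proposition~2.13 of Yamaguchi \cite{Yamaguchi}, which supplies the group structure of $\pi_5(S^2\cup_{2^m\eta_2}e^4)$ and the order of the lift directly; if you want a self-contained argument you must actually carry out the bracket computation you sketch, or cite such a reference.
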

\begin{proof}
	From the Lemma \ref{partial calculate1}, we have the following exact sequence with commutative squares
	\begin{align} \small{\xymatrix{
				\pi_{6}( S^4)\ar[r]^-{(\partial^L_{m})_{5\ast}}&\pi_{5}(F_{p^{L}_{m}})\ar[r]&\pi_{5}(L_m^4)\ar[r]&\pi_{5}(S^4)\ar[r]^-{ (\partial^L_{m})_{4\ast}}& \pi_{4}(F_{p^{L}_{m}})\\
				\pi_{5}(S^3)\ar[r]^-{(2^m\eta_2)_{\ast}}\ar[u]_{\Sigma \cong } &\pi_{5}(S^2)\ar[u]_{ j_{p_m^L\ast}}&&\pi_{4}( S^3)\ar[r]^-{(2^m\eta_2)_{\ast}}\ar[u]_{\cong\Sigma } &\pi_{4}(S^2)\ar[u]_{j_{p_m^L\ast}} }}   \nonumber
	\end{align}
	
	The left and right commutative squares above imply
	\begin{align}
		(\partial^L_{m})_{4\ast}, (\partial^L_{m})_{5\ast}~\text{are}~0 ~\text{for}~m\geq 1;~\text{isomorphic}  ~\text{for}~m=0 \label{partial4,5 on Lm4}
	\end{align}
	
	Thus it easy to get $\pi_5(L_{0}^4)=\pi_5(\mathbb{C}P^2)=\Z_{(2)}\{\tau_0^Lj_0^{S^5}\}$.
	
	For $m\geq 1$, we get the following short exact sequence with the left commutative triangle
	\begin{align} \small{\xymatrix{
				0\ar[r]&\pi_{5}(F_{p^{L}_{m}})\ar[r]^{\tau^{L}_{m\ast}}&\pi_{5}(L_m^4)\ar[r]^{p_{m\ast}^{L}}&\pi_{5}(S^4)\ar[r]& 0 \\
				&\pi_{5}(S^2\vee S^5)=\Z_2\{j_1^2\eta_2^3\}\oplus\Z_{(2)}\{j_2^5\iota_5\}\ar[ur]_-{~~~~~(j_{m\ast}^L,~ (\tau_m^Lj_m^{S^5})_\ast)}\ar@{^{(}->}[u]^{\cong }&&\Z_2\{\eta_4\}\ar@{=}[u]&}}   \nonumber
	\end{align}
	From Proposition 2.13 of \cite{Yamaguchi}, we get this lemma.
\end{proof}

Next we compute $\pi_{6}(L_m^4)$. There is the exact sequence
\begin{align}
	\small{\xymatrix{
			\Z_{(2)}\{\nu_4\}\!\oplus\!\Z_4\{\Sigma\nu'\}\!=\!\pi_{7}( S^4)\!\ar[r]^-{(\partial^L_{m})_{6\ast}}&\!\pi_{6}(F_{p^{L}_{m}})\!\ar[r]^{\tau_{m\ast}^L}&\!\pi_{6}(L_m^4)\!\ar[r]^{p_{m\ast}^L}&\!\pi_{6}(S^4)\!\ar[r]^-{ (\partial^L_{m})_{5\ast}}& \!\pi_{5}(F_{p^{L}_{m}}) }} \label{long exact for pi6(Lm4)}
\end{align}
where
$\pi_6(F^L_{p_{m}})=\Z_4\{j^L_{p_m}\eta_2\nu'\}\oplus \Z_2\{j^{S^5}_{m}\eta_5\}\oplus \Z_{(2)}\{j_m^F[j_1^2, j_2^5]\}\cong \pi_6(J_2(M_{2^m\eta_3},S^3))=\pi_6(S^2\vee S^5)$.

\begin{lemma}\label{lem:pi6(Lm4)}
	\small{\begin{align}
			&\pi_{6}(L_m^4)\!=\!\left\{
			\begin{array}{ll}
				\Z_2\{\beta_1\eta_5\}\!\oplus \!\Z_4\{J^{LF}_1[j_1^2, j_2^5]\}\!\oplus\!\Z_2\{\bar{\lambda}\tilde{\eta}_{4}^2\}, & \! \! \hbox{$m=1$;} \\
				\Z_2\{\beta_2\eta_5\}\!\oplus \!\Z_2\{j^L_{2}\eta_2\nu'\!+\!2J^{LF}_2[j_1^2, j_2^5]\}\!\oplus\!\Z_8\{J^{LF}_2[j_1^2, j_2^5]\}\!\oplus\!\Z_2\{\bar{\lambda}\tilde{\eta}_{4}^2\}, & \!\!\hbox{$m=2$;} \\
				\Z_2\{\beta_m\eta_5\}\!\oplus\! \Z_4\{j^L_{m}\eta_2\nu'\}\!\oplus\!\Z_{2^m}\{J_{m}^{LF}[j_1^2, j_2^5]\}\!\oplus\!\Z_2\{\bar{\lambda}\tilde{\eta}_{4}^2\}, &\! \!\hbox{$m\geq 3$.}
			\end{array}
			\right.
			\nonumber
	\end{align}}
	where $J_{m}^{LF}=\tau_{m}^Lj_m^F$, $\bar{\lambda}\tilde{\eta}_{4}^2$ is a lift of $\eta^2_4\in\pi_6(S^4)$, i.e., $p_{m\ast}^{L}(\bar{\lambda}\tilde{\eta}_{4}^2)=\eta^2_4$.
\end{lemma}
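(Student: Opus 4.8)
The plan is to read off $\pi_{6}(L_m^4)$ from the exact sequence (\ref{long exact for pi6(Lm4)}) attached to the fibration $\Omega S^4\xrightarrow{\partial^L_m}F_{p^L_m}\xrightarrow{\tau^L_m}L^4_m\xrightarrow{p^L_m}S^4$, following the three--step scheme of Section~\ref{sec: 3}: compute $Coker\,(\partial^L_m)_{6\ast}$ and $Ker\,(\partial^L_m)_{5\ast}$, and then resolve the short exact sequence $0\to Coker\,(\partial^L_m)_{6\ast}\to\pi_{6}(L^4_m)\to Ker\,(\partial^L_m)_{5\ast}\to0$. Since $\pi_{6}(F_{p^L_m})=\Z_4\{j^L_{p_m}\eta_2\nu'\}\oplus\Z_2\{j^{S^5}_m\eta_5\}\oplus\Z_{(2)}\{j^F_m[j_1^2,j_2^5]\}$, $\pi_{7}(S^4)=\Z_{(2)}\{\nu_4\}\oplus\Z_4\{\Sigma\nu'\}$ and $\pi_{6}(S^4)=\Z_2\{\eta_4^2\}$ are already at hand, the whole problem reduces to evaluating $\partial^L_m$ on the three classes $\eta_4^2$, $\Sigma\nu'$, $\nu_4$.

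First the right--hand end. Because $\eta_4^2=\Sigma\eta_3^2$ and $\Sigma\nu'$ are suspensions, Lemma~\ref{partial calculate1} applied to $f=2^m\eta_2$ furnishes the homotopy--commutative square $\partial^L_m\circ(\mathrm{unit}_{S^3})\simeq j^L_{p_m}\circ(2^m\eta_2)$, so $(\partial^L_m)_{5\ast}(\eta_4^2)=(j^L_{p_m})_\ast\big((2^m\eta_2)\circ\eta_3^2\big)$ and $(\partial^L_m)_{6\ast}(\Sigma\nu')=(j^L_{p_m})_\ast\big((2^m\eta_2)\circ\nu'\big)$. Using $[\iota_3,\iota_3]=0$ (post--composition with $2^m\iota_3$ is then multiplication by $2^m$) together with the additivity of post--composition by $\eta_2$, these become $(j^L_{p_m})_\ast(2^m\eta_2^3)$ and $(j^L_{p_m})_\ast(2^m\,\eta_2\nu')$; in $\pi_5(S^2)=\Z_2\{\eta_2^3\}$ the first vanishes for $m\ge1$, while in $\pi_6(S^2)=\Z_4\{\eta_2\nu'\}$ the second equals $2^m\,\eta_2\nu'$. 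Hence $(\partial^L_m)_{5\ast}=0$, so $p^L_{m\ast}$ is onto $\Z_2$ and a lift $\bar\lambda\tilde\eta_4^2$ of $\eta_4^2$ exists; taking it to be $\tilde\eta_4\circ\eta_5$ (with $\tilde\eta_4$ as in Lemma~\ref{lem:pi5(Lm4)}) one gets $2(\tilde\eta_4\eta_5)=\tilde\eta_4\circ(2\eta_5)=0$, so this $\Z_2$ splits off as a direct summand of $\pi_{6}(L^4_m)$.

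The delicate input, and the step I expect to be the main obstacle, is $(\partial^L_m)_{6\ast}(\nu_4)$: since $\nu_4$ is not a suspension, the square of Lemma~\ref{partial calculate1} gives no information. A priori one already knows $\pi_{6}(L^4_m)$ must be finite, so $Coker\,(\partial^L_m)_{6\ast}$ is finite and the class $j^F_m[j_1^2,j_2^5]$ (of infinite order) must be hit with finite index; as $\nu_4$ is the only infinite--order generator of $\pi_7(S^4)$, its image must therefore have nonzero $[j_1^2,j_2^5]$--component. To pin down the $2$--adic valuation of that component I would write $\nu_4$ through the Whitehead square $[\iota_4,\iota_4]=\pm(2\nu_4-\Sigma\nu')$ and use the naturality of Whitehead (equivalently Samelson) products under $\partial^L_m$, or alternatively exploit the James--Hopf structure of $F_{p^L_m}\simeq J(M_{2^m\eta_2},S^3)$ (Theorem~\ref{Thm Jn(Mf,X)}, Lemma~\ref{J(X,A)toJ(X',A')}) in which the factor $2^m$ enters through the relative smash $J_2/J_1=S^2\wedge S^3$ governed by $2^m\eta_2$; the expected outcome is $(\partial^L_m)_{6\ast}(\nu_4)=\pm\big(2^{m-1}j^L_{p_m}\eta_2\nu'+2^m\,j^F_m[j_1^2,j_2^5]\big)$, the residual $2$--torsion coefficients being forced/confirmed by Toda's composition relations, and a reduction to $m=1$ via the degree--$2$ self--map $L^4_m\to L^4_{m-1}$ and induction is a possible shortcut.

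Finally I assemble. With $(\partial^L_m)_{6\ast}(\Sigma\nu')=2^m\,j^L_{p_m}\eta_2\nu'$ and the value of $(\partial^L_m)_{6\ast}(\nu_4)$ above, $Coker\,(\partial^L_m)_{6\ast}$ is the quotient of $\Z_4\{j^L_{p_m}\eta_2\nu'\}\oplus\Z_2\{j^{S^5}_m\eta_5\}\oplus\Z_{(2)}\{j^F_m[j_1^2,j_2^5]\}$ by the subgroup these two elements generate; a Smith--normal--form calculation gives $\Z_2\{\beta_1\eta_5\}\oplus\Z_4\{J^{LF}_1[j_1^2,j_2^5]\}$ for $m=1$, $\Z_2\{\beta_2\eta_5\}\oplus\Z_2\{j^L_2\eta_2\nu'+2J^{LF}_2[j_1^2,j_2^5]\}\oplus\Z_8\{J^{LF}_2[j_1^2,j_2^5]\}$ for $m=2$, and $\Z_2\{\beta_m\eta_5\}\oplus\Z_4\{j^L_m\eta_2\nu'\}\oplus\Z_{2^m}\{J^{LF}_m[j_1^2,j_2^5]\}$ for $m\ge3$ (here $\beta_m=\tau^L_mj^{S^5}_m$, $J^{LF}_m=\tau^L_mj^F_m$ identify $\tau^L_{m\ast}$ of the generators of $\pi_{6}(F_{p^L_m})$). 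Adjoining the split summand $\Z_2\{\bar\lambda\tilde\eta_4^2\}$ coming from $Ker\,(\partial^L_m)_{5\ast}=\pi_6(S^4)$ yields the three cases of the lemma. The two places that genuinely require care are the exact evaluation of $(\partial^L_m)_{6\ast}(\nu_4)$ — in particular the coefficients $2^{m-1}$ and $2^m$ — and the non--split extension occurring for $m=2$, which is exactly why its generators must be recorded in the twisted form $j^L_2\eta_2\nu'+2J^{LF}_2[j_1^2,j_2^5]$.
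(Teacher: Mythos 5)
Your overall architecture is the same as the paper's: evaluate $(\partial^L_m)_{5\ast}$ and $(\partial^L_m)_{6\ast}$, take the cokernel, and split off a $\Z_2$ lift of $\eta_4^2$ (your candidate lift $\tilde\eta_4\circ\eta_5$, of order $2$ because $2(\tilde\eta_4\eta_5)=\tilde\eta_4(2\eta_5)=0$, is a legitimate and even slightly slicker alternative to the paper's $\bar\lambda\tilde\eta_4^2$). The computations of $(\partial^L_m)_{5\ast}=0$ and $(\partial^L_m)_{6\ast}(\Sigma\nu')=2^m j^L_{p_m}\eta_2\nu'$ via Lemma \ref{partial calculate1} are fine, and your Smith-normal-form assembly of the cokernel (including the twisted generator for $m=2$) is correct \emph{conditional on} the value of $(\partial^L_m)_{6\ast}(\nu_4)$.

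But that value is exactly where your argument stops being a proof. You yourself flag it as ``the step I expect to be the main obstacle'' and then only list candidate strategies (``I would write $\nu_4$ through the Whitehead square \dots or alternatively exploit the James--Hopf structure \dots the expected outcome is \dots''), asserting $(\partial^L_m)_{6\ast}(\nu_4)=\pm\bigl(2^{m-1}j^L_{p_m}\eta_2\nu'+2^m j^F_m[j_1^2,j_2^5]\bigr)$ without deriving it. The finiteness argument you give only shows the $[j_1^2,j_2^5]$-coefficient is nonzero; the entire content of the lemma (whether the last summand is $\Z_{2^m}$ or something larger, and whether the $m=2$ extension is twisted) hinges on that coefficient being $2^m$ times a \emph{unit}. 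The Whitehead-square identity $[\iota_4,\iota_4]=\pm(2\nu_4-\Sigma\nu')$ at best controls $2\partial(\nu_4)$ and still requires an independent evaluation of $\partial[\iota_4,\iota_4]$, which you do not supply. The paper closes this gap with a two-stage comparison you don't have: first it maps the Moore-space fibration $F_{p_m}\to P^4(2^m)\to S^4$ into $F_{p^L_m}\to L^4_m\to S^4$ via $\bar\lambda$ to import the known formula $(\partial_m)_{6\ast}(\nu_4)=\pm2^{m-1}j_{p_m}\nu'+2^m j^6_{p_m}$ from \cite{ZP A_3^2}, which pins the coefficient down only as $2^m y_m$ with $y_m$ unknown; then it determines $y_0=\pm1$ from $\pi_6(\mathbb CP^2)\cong\Z_2$ (via $\Omega\mathbb CP^2\simeq S^1\times\Omega S^5$) and compares $L^4_m$ with $\mathbb CP^2$ through the map $\bar\chi$ covering $2^m\iota_4$, using $(2^m\iota_4)_\ast\nu_4=2^{2m}\nu_4$ to force $\pm2^m c_m=2^{2m}c_0$ and hence $c_m=\pm2^m$. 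Without some argument of this kind your proof is incomplete at its central step.
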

\begin{proof}
	For $m\geq 0$, assume that
	\begin{align}
		(\partial^L_{m})_{6\ast}(\nu_4)=a_m j^L_{p_m}\eta_2\nu'+b_mj^{S^5}_{m}\eta_5+c_mj_m^F[j_1^2, j_2^5], a_m\in \Z_4, b_m\in \Z_2, c_m\in \Z \label{Assum for partial6nu4}
	\end{align}

	We have the following  two commutative diagrams for cofibration sequences left and fibration sequences right respectively.
	\begin{align}
		\small{\xymatrix{
				S^3 \ar@{=}[d]\ar[r]^-{2^m\iota_3}&S^3\ar[d]_{\eta_2}\ar[r]^{i_m} & P^{4}(2^m) \ar[d]_{\bar{\lambda}}\ar[r]^{p_m}& S^4\ar@{=}[d]\\
				S^3 \ar[r]^-{2^m\eta_2}&S^2 \ar[r]^{j^L_m}& L_m^4\ar[r]^{p_m^L}& S^4
		} }
		, ~~\small{\xymatrix{
				\Omega S^4 \ar@{=}[d]\ar[r]^-{\partial_{m} }&F_{p_{m}}\ar[d]_{\lambda}\ar[r]^{i_{p_m}} & P^{4}(2^m) \ar[d]_{\bar{\lambda}}\ar[r]^{p_m}& S^4\\
				\Omega S^4 \ar[r]^-{\partial_m^L }&F_{p_{m}^L} \ar[r]^{\tau_m^L}& L_m^4\ar[r]^{p^L_m}& S^4 \ar@{=}[u]
		} }. \label{diagram 1 M2m4 to Lm4}
	\end{align}
	Then from Lemma \ref{J(X,A)toJ(X',A')}, there is the following commutative diagrams
	\begin{align}
		\small{\xymatrix{
				\pi_7(S^4) \ar@{=}[d]\ar[r]^-{(\partial_{m})_{6\ast} }&\pi_6(F_{p_{m}})\ar[d]_{\lambda_{\ast}} \\
				\pi_7(S^4) \ar[r]^-{(\partial_m^L)_{6\ast} }&\pi_6(F_{p_{m}^L})
		} };
		\small{\xymatrix{
				\pi_6(S^3)\ar[d]_{\eta_{2\ast}}\ar[r]^-{j_{p_m\ast}}&\pi_6(F_{p_{m}})\cong \pi_6(J_2(M_{2^m\iota_3},S^3))\ar[d]_{\lambda_{\ast}}\ar[r]^-{p_{\iota\ast}^6} & \pi_6(S^3\wedge S^3)\ar[d]_{(\eta_{2}\wedge \iota_3)_{\ast}}\\
				\pi_6(S^2)\ar[r]^-{j_{p_m\ast}^L}&\pi_6(F_{p_{m}^L})\cong \pi_6(J_2(M_{2^m\eta_2},S^3))\ar[r]^-{p_{\eta\ast}^5}  & \pi_6(S^3\wedge S^2)
		} } \label{diagram M2m4 to Lm4}
	\end{align}

	where $\pi_6(F_{p_{m}})=\Z_4\{j_{p_m}\nu'\}\oplus \Z_{(2)}\{j_{p_m}^6\}\cong \pi_6(J_2(M_{2^m\iota_3},S^3))=\pi_6(S^3\vee S^6)$,  $j^6_{p_{m}}: S^6\hookrightarrow Sk_7(F_{p_{m}})\simeq S^3\vee S^6$ is the inclusion of the wedge summand $ S^6$ (Section 3.1 of \cite{ZP A_3^2}); $p_{\iota}^6$ and $p_{\eta}^5$ are the quotient maps $J_2(M_{f},S^3)\rightarrow J_2(M_{f},S^3)/J_1(M_{f},S^3)$  for $f=2^m\iota_3$ and $f=2^m\eta_2$ respectively.
	
	From the right commutative square of the second diagram of (\ref{diagram M2m4 to Lm4}), we can assume
	\begin{align}
		\lambda_{\ast}(j_{p_m}^{6})=x_mj_{p_m}^L\eta_2\nu'+j^{S^5}_{m}\eta_5+ y_mj_m^F[j_1^2, j_2^5]~~\text{for some integers}~x_m,y_m.  \label{assum lambda jpm6}
	\end{align}
	By the proof of (3.3), (3.5) of \cite{ZP A_3^2}, for $m\geq 0$
	we get
	\begin{align}
		(\partial_m)_{6\ast}(\Sigma\nu')= 2^{m}j_{p_m}\nu',~~~~~  (\partial_m)_{6\ast}(\nu_4)=\pm 2^{m-1}j_{p_m}\nu'+2^mj_{p_m}^6 \label{partialm on pi7S4}
	\end{align}
	where   $\pm 2^{0-1}=t_0$ also represents some integer  in (\ref{partialm on pi7S4}) for $m=0$.
	
	Thus by the first commutative square of  (\ref{diagram M2m4 to Lm4}), we have 
	\begin{align}
		&(\partial_{m}^{L})_{6\ast}(\Sigma\nu') =\lambda_{\ast}(\partial_m)_{6\ast}(\Sigma\nu')=\lambda_{\ast}( 2^{m}j_{p_m}\nu')=2^mj_{p_m}^L\eta_2\nu' \label{partial6(Sigmanu')Lm4}\\
		&(\partial_{m}^{L})_{6\ast}(\nu_4) =\lambda_{\ast}(\partial_m)_{6\ast}(\nu_4)=\lambda_{\ast}(\pm 2^{m-1}j_{p_m}\nu'+2^mj_{p_m}^6) \nonumber\\
		&=(\pm2^{m-1}+2^m x_m)j_{p_m}^L\eta_2\nu'+2^mj^{S^5}_{m}\eta_5+2^my_mj_m^F[j_1^2, j_2^5].\nonumber
	\end{align}
	Comparing above equation with (\ref{Assum for partial6nu4}) to get $c_m=2^my_m$ and \begin{align}
		\!\!\!\!\!\!\small{\begin{tabular}{r|c|c|c|}
				\cline{2-4}
				& $ m=1$&$m=2$&$m\geq 3$ \\
				\cline{2-4}
				$\Z_4\!\ni\! a_m\!=\!\pm2^{m-1}\!+\!2^m x_m\!=\!$  &  $  \pm  1$ & $2$ &$0$\\
				\cline{2-4}
		\end{tabular}};~~b_m\!=\!\left\{
		\begin{array}{ll}
			1, &\! \hbox{$m=0$;} \\
			0, &\! \hbox{$m=1$.}
		\end{array}
		\right.  \label{equation am bm}
	\end{align}
	
	It is well known that $\Omega \mathbb{C}P^2\simeq S^1\times \Omega S^5$, implies that $\pi_6(\mathbb{C}P^2)\cong \Z_2$.
	Hence by the exact sequence (\ref{long exact for pi6(Lm4)}) for $m=0$ and (\ref{partial4,5 on Lm4}),(\ref{assum lambda jpm6}), (\ref{partial6(Sigmanu')Lm4}), (\ref{equation am bm}) we get \footnote{In equation $(\ref{equi c0})$, here we get $y_0=\pm 1$  without doing $2$-localizing. But under $2$-localization, we only get $y_0$ is odd.}
	\begin{align}
		&\Z_2\cong \pi_6(\mathbb{C}P^2)=\pi_6(L_0^4)\cong Coker (\partial_{0}^L)_{6\ast}\nonumber\\
		&=\frac{\Z_4\{j^L_{p_0}\eta_2\nu'\}\oplus \Z_2\{j^{S^5}_{0}\eta_5\}\oplus \Z_{(2)}\{j_0^F[j_1^2, j_2^5]\}}{\left \langle j^L_{p_0}\eta_2\nu', a_0j^L_{p_0}\eta_2\nu'+j^{S^5}_{0}\eta_5+y_0j_0^F[j_1^2, j_2^5]\right \rangle} \cong \frac{\Z_2\oplus \Z_{(2)}}{\left \langle (1, y_0)\right \rangle} \nonumber\\
		\Rightarrow &~~~(\partial_{0}^L)_{6\ast}(\nu_4)=a_0j^L_{p_0}\eta_2\nu'+j^{S^5}_{0}\eta_5+c_0j_0^F[j_1^2, j_2^5],~~ c_0=y_0=\pm 1~\label{equi c0}
	\end{align}

	In order to get $c_m (m\geq 1)$, consider the following commutative diagrams
	\begin{align}
		\small{\xymatrix{
				S^3 \ar[d]^{2^m\iota_3}\ar[r]^-{2^m\eta_3}&S^2\ar@{=}[d]\ar[r]^{j^L_m} &  L_m^4 \ar[d]_{\bar{\chi}}\ar[r]^{p_m^L}& S^4\ar[d]_{2^m\iota_4}\\
				S^3 \ar[r]^-{\eta_2}&S^2 \ar[r]^-{j^L_0}& L_0^4=\mathbb{C}P^2\ar[r]^-{p_0^L}& S^4
		} }
		, ~~ \small{\xymatrix{
				\Omega S^4\ar[d]^{\Omega (2^m\iota_4)}\ar[r]^-{\partial_m^L}&F_{p_{m}^L}\ar[d]_{\chi}\ar[r]^{\tau_m^L} & L_{m}^4 \ar[d]_{\bar{\chi}}\ar[r]^{p^L_m}& S^4\ar[d]_{2^m\iota_4}\\
				\Omega S^4 \ar[r]^-{\partial_0^L }&F_{p_{0}^L} \ar[r]^{\tau_0^L}& L_0^4\ar[r]^{p^L_0}& S^4
		} }\label{diagrams 1 for Lm4 to CP2}
	\end{align}
	The following left homotopy commutative diagram and right commutative diagram are induced by the above diagrams from Lemma \ref{J(X,A)toJ(X',A')}
	\begin{align}
		\footnotesize{\xymatrix{
				S^2 \ar@{=}[d]\ar[r]^-{j_{p_m^L}}&J_{2}(M_{2^m\eta_2},\! S^3)\!=\!S^2\!\vee \!S^5\ar[d]^{\chi |_{J_2}}\ar[r]^-{p_m^5} &  S^2\!\wedge \!S^3 \ar[d]_{\iota_2\wedge 2^m\iota_3}\\
				S^2 \ar[r]^-{j_{p_0^L}}&J_{2}(M_{\eta_2}, S^3)\!=\!S^2\!\vee\! S^5\ar[r]^-{p_0^5} &  S^2\!\wedge \!S^3
		} };~~\footnotesize{\xymatrix{
				\pi_7(S^4)\ar[d]^{(2^m\iota_4)_{\ast}}\ar[r]^-{(\partial_m^L)_{6\ast}}&\pi_6(F_{p_{m}^L})\ar[d]_{\chi_{\ast}} & \pi_6(S^2\!\vee \!S^5)\ar[l]_{j_{m\ast}^F}^{\cong}\ar[d]_{(\chi |_{J_2})_{\ast}}\\
				\pi_7( S^4) \ar[r]^-{(\partial_0^L)_{6\ast} }&\pi_6(F_{p_{0}^L}) & \pi_6(S^2\!\vee \!S^5)\ar[l]_{j_{0\ast}^F}^{\cong}
		} } \label{diagrams for Lm4 to CP2}
	\end{align}
	where $\chi|_{J_2}$ is the restriction of $\chi: F_{p_{m}^L}\simeq J(M_{2^m\eta_2},S^3)\rightarrow  F_{p_{0}^L}\simeq J(M_{\eta_2},S^3)$ and from the left diagram above we get
	\begin{align}
		\chi |_{J_2}=\iota_2\vee (\pm 2^m\iota_5)+\varepsilon j_1^2\eta^3_2 q_2^5, \varepsilon\in \Z_2.  \label{chiJ2}
	\end{align}
	
	Let  $P_3: \pi_6(S^2\vee S^5)=\Z_4\{j^2_{1}\eta_2\nu'\}\oplus \Z_2\{j^{5}_{5}\eta_5\}\oplus \Z_{(2)}\{[j_1^2, j_2^5]\}\rightarrow \Z_{(2)}\{[j_1^2, j_2^5]\} $ be the projection to the last summand.
	\begin{align}
		&P_3\chi_{\ast}(j_{m\ast}^{F})^{-1}(\partial_m^L)_{6\ast}(\nu_4)=  P_3\chi_{\ast}(a_mj_1^2\eta_2\nu'+c_m[j_1^2,j_2^5]) ~(\text{by}~(\ref{Assum for partial6nu4}), (\ref{equation am bm}))\nonumber\\
		&=P_3(\iota_2\vee (\pm 2^m\iota_5)+\varepsilon j_1^2\eta^3_2 q_2^5)_{\ast}(c_m[j_1^2,j_2^5]) ~(\text{by}~(\ref{chiJ2}))\nonumber\\
		&=P_3(c_m[j_1^2, \pm 2^m j_2^5 ]+ c_m[0,\varepsilon j_1^2\eta^3_2]) =\pm 2^m c_m [j_1^2,  j_2^5].\nonumber
	\end{align}
	On the other hand, by the right commutative diagram in (\ref{diagrams for Lm4 to CP2})
	\begin{align}
		&P_3\chi_{\ast}(j_{m\ast}^{F})^{-1}(\partial_m^L)_{6\ast}(\nu_4)=P_3 (j_{0\ast}^{F})^{-1}(\partial_0^L)_{6\ast}(2^m\iota_4)_{\ast}(\nu_4) \nonumber\\
		&=P_3 (j_{0\ast}^{F})^{-1}(\partial_0^L)_{6\ast}(2^{2m}\nu_4) ~(\text{by Lemma A.1 of \cite{ZP A_3^2}})\nonumber\\
		&=P_3 (j_{0\ast}^{F})^{-1}(2^{2m}(a_0j^L_{p_0}\eta_2\nu'+j^{S^5}_{0}\eta_5+c_0j_0^F[j_1^2, j_2^5])) =2^{2m} c_0 [j_1^2,  j_2^5]. \nonumber\\
		\Rightarrow &~~~~\pm 2^m c_m=2^{2m} c_0=\pm 2^{2m}~~~~\Rightarrow~~~ c_m=\pm 2^m. \label{equ for cm}
	\end{align}
	From (\ref{partial4,5 on Lm4}), (\ref{Assum for partial6nu4}),(\ref{partial6(Sigmanu')Lm4}), (\ref{equation am bm}), (\ref{equ for cm}), we get the following short exact sequence
	\begin{align}
		0\rightarrow Coker(\partial_{m}^L)_{6\ast}\xrightarrow{\tau_{m\ast}^L} \pi_{6}(L_m^4)\xrightarrow{p_{m\ast}^L}\pi_{6}(S^4)\rightarrow 0~~~~~(m\geq 1) \label{Short exact pi6Lm4}
	\end{align}
	\begin{align}
		\text{where}~~ &Coker(\partial_{m}^L)_{6\ast}=\frac{\Z_4\{j^L_{p_m}\eta_2\nu'\}\oplus \Z_2\{j^{S^5}_{m}\eta_5\}\oplus \Z_{(2)}\{j_m^F[j_1^2, j_2^5]\}}{  \left \langle 2^m j^L_{p_m}\eta_2\nu', \pm 2^{m-1}j^L_{p_m}\eta_2\nu'\pm2^mj_m^F[j_1^2,j_2^5] \right \rangle }\nonumber\\
		&=\left\{
		\begin{array}{ll}
			\Z_2\{j^{S^5}_{1}\eta_5\}\oplus \Z_4\{j_1^F[j_1^2, j_2^5]\}, & \hbox{$m=1$;} \\
			\Z_2\{j^{S^5}_{2}\eta_5\}\oplus \Z_2\{j^L_{p_2}\eta_2\nu'\!+\!2j_2^F[j_1^2, j_2^5]\}\oplus\Z_8\{j_2^F[j_1^2, j_2^5]\}, & \hbox{$m=2$;} \\
			\Z_2\{j^{S^5}_{m}\eta_5\}\oplus \Z_4\{j^L_{p_m}\eta_2\nu'\}\oplus\Z_{2^m}\{j_m^F[j_1^2, j_2^5]\}, & \hbox{$m\geq 3$.}
		\end{array}
		\right.
		\nonumber
	\end{align}
	From Section 3.1 of \cite{ZP A_3^2}, there is an element $\tilde{\eta}_4^2\in \pi_6(P^{4}(2^m))$ with order 2  such that $p_{m}(\tilde{\eta}_4^2)=\eta_4^2$. By the commutative diagram (\ref{diagram 1 M2m4 to Lm4}), $\bar{\lambda}\tilde{\eta}_4^2$ is an order 2 lift of $\eta_4^2\in \pi_{6}(S^4)$ which implies that the short exact sequence (\ref{Short exact pi6Lm4}) splits. We complete the proof of Lemma \ref{lem:pi6(Lm4)}.
\end{proof}

\subsection{The attaching map $\gamma_3:S^5\rightarrow L_{r+1}^4$}

\begin{lemma}\label{lemma Sigma Tpi5(Lm4)} For the suspension homorphism $  \pi_5(L_m^4)\xrightarrow{\Sigma} \pi_6(\Sigma L_m^4)=\pi_6(S^3\vee S^5)=\Z_4\{j_1^3\nu'\}\oplus\Z_2\{j_2^5\eta_5\}$ $(m\geq 2)$, we have
	\begin{align}
		\Sigma(j_{m}^L\eta_2^3)=2j_1^3\nu'; ~~~~\Sigma\tilde{\eta}_4=j_2^5\eta_5+2\varepsilon j_1^3\nu', \varepsilon\in \{0,1\}. \nonumber
	\end{align}
\end{lemma}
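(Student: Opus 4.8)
The plan is to compute the suspension homomorphism $\Sigma\colon \pi_5(L_m^4)\to\pi_6(\Sigma L_m^4)$ by identifying the effect of $\Sigma$ on each of the three generators $\beta_m$, $j_m^L\eta_2^3$, $\tilde{\eta}_4$ appearing in Lemma \ref{lem:pi5(Lm4)}. Since $\Sigma L_m^4 = S^3\cup_{2^m\eta_3}CS^4$ and $2^m\eta_3$ is a suspension of $2^m\eta_2$, the cofibration sequence $S^4\xrightarrow{2^m\eta_3}S^3\hookrightarrow \Sigma L_m^4\to S^5$ becomes (after one more suspension, or directly by the Hilton–Milnor/James splitting reasoning used earlier in the paper) a wedge $S^3\vee S^5$ because $2^m\eta_3$ suspends from $\pi_5(S^3)$ where it is still $2^m\eta_3$ but the attaching map becomes null after smashing; more precisely $\Sigma L_m^4\simeq S^3\vee S^5$ is already asserted in the statement, with $\pi_6(S^3\vee S^5)=\Z_4\{j_1^3\nu'\}\oplus\Z_2\{j_2^5\eta_5\}$ via Hilton's theorem (the Whitehead product $[j_1^3,j_2^5]$ lands in dimension $7$, so does not contribute in degree $6$).

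First I would handle $\Sigma(j_m^L\eta_2^3)$. Under $\Sigma$, the inclusion $j_m^L\colon S^2\hookrightarrow L_m^4$ becomes $\Sigma j_m^L\colon S^3\hookrightarrow \Sigma L_m^4$, which is exactly $j_1^3$ (the inclusion of the bottom cell, identified with the first wedge summand). Hence $\Sigma(j_m^L\eta_2^3) = j_1^3\circ\Sigma(\eta_2^3) = j_1^3\circ(\eta_3\eta_4\eta_5)$. By Toda's relation $\eta_3\eta_4\eta_5 = \eta_3\circ\eta_4^2 = 2\nu' \bmod \text{(stuff)}$... more carefully: in $\pi_6(S^3)=\Z_4\{\nu'\}$ one has $\eta_3^3 = \Sigma(\eta_2^3) $ and the classical identity $\eta_3^3 = 2\nu' = 12\nu' = \ldots$; the standard fact (Toda) is $\eta_3^{\,3}=2\nu'$ in $\pi_6(S^3)$, equivalently $\Sigma\eta_2^3 = 2\nu'$ after suitable sign normalization (note $\eta_2^3$ has order $2$, and $2\nu'$ is the unique element of order $2$ in $\Z_4\{\nu'\}$, so this is forced up to the obvious sign). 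Therefore $\Sigma(j_m^L\eta_2^3)=2j_1^3\nu'$, which is the first claimed formula.

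Next I would compute $\Sigma\tilde{\eta}_4$, where $\tilde{\eta}_4\in\pi_5(L_m^4)$ is a chosen lift of $\eta_4\in\pi_5(S^4)$ along $p_m^L$. Applying $\Sigma$ to the cofibration/fibration data and using that $p_m^L$ suspends to the projection $\Sigma L_m^4=S^3\vee S^5\to S^5$ sending $j_2^5\mapsto\iota_5$, $j_1^3\mapsto 0$, we see that $(\Sigma p_m^L)_\ast(\Sigma\tilde{\eta}_4)=\Sigma\eta_4=\eta_5$. Writing $\Sigma\tilde{\eta}_4 = a\,j_1^3\nu' + b\,j_2^5\eta_5$ with $a\in\Z_4$, $b\in\Z_2$, the projection condition forces $b=1$. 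It remains to pin down $a$ modulo the indeterminacy coming from the kernel of $(\Sigma p_m^L)_\ast$, i.e. from $\Sigma(j_m^L)_\ast\pi_6(S^3)=\Z_4\{j_1^3\nu'\}$; but the lift $\tilde{\eta}_4$ itself is only well-defined modulo $\mathrm{Im}(j_m^L)_\ast$ on $\pi_5$, which consists of $2$-torsion ($\Z_2\{j_m^L\eta_2^3\}$ for $m>1$, and its suspension is $2j_1^3\nu'$). So the coefficient $a$ is only well-defined modulo $2$, giving $\Sigma\tilde\eta_4 = j_2^5\eta_5 + 2\varepsilon j_1^3\nu'$ with $\varepsilon\in\{0,1\}$ — exactly the stated form; whether $\varepsilon$ can be taken $0$ depends on the choice of lift and on whether $\eta_5$ itself (as the suspension of a genuine element of $\pi_6(\Sigma L_m^4)$) picks up a $\nu'$-correction, which one checks by comparing with the known structure of $\pi_6(L_m^4)$ or by a secondary-operation/Toda-bracket argument. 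The main obstacle is precisely this last indeterminacy computation for $\tilde{\eta}_4$: identifying $\varepsilon$ (or showing the answer genuinely depends on the choice of lift) requires either an explicit cell-by-cell construction of $\tilde\eta_4$ or invoking Toda's composition/EHP relations in $\pi_6(S^3)$ and $\pi_5(S^4)$; everything else is bookkeeping with suspension of standard classes.
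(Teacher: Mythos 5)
Your first formula is fine and matches the paper's argument in substance: both rest on the naturality $\Sigma\circ j^L_{m\ast}=j^3_{1\ast}\circ\Sigma$ (the paper packages this via its Lemma \ref{lemm Suspen Fp} and the fibre $F_{\Sigma p^L_m}$ with $Sk_7F_{\Sigma p^L_m}=S^3\vee S^7$, but the content is the same) together with $\Sigma(\eta_2^3)=\eta_3^3=2\nu'$. Likewise your identification $b=1$ from $p^5_{2\ast}\Sigma\tilde\eta_4=\Sigma\eta_4=\eta_5$ is correct.

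The gap is in the claim that the $j_1^3\nu'$-coefficient of $\Sigma\tilde\eta_4$ is \emph{even}. You argue that the lift $\tilde\eta_4$ is only defined modulo classes whose suspensions lie in $2\Z_4\{j_1^3\nu'\}$, and conclude that ``$a$ is well-defined modulo $2$, giving $a=2\varepsilon$.'' But ``well-defined mod $2$'' only says that changing the lift changes $a$ by an even number; it is perfectly compatible with $a\in\{1,3\}$ for every choice of lift, in which case the stated formula would be false. (You then flag the determination of $\varepsilon$ as the remaining obstacle, but the lemma does not determine $\varepsilon$ — what it asserts, and what your argument fails to deliver, is precisely the parity of $a$.) The missing step is short: by Lemma \ref{lem:pi5(Lm4)}, for $m\geq 2$ the chosen lift $\tilde\eta_4$ generates a $\Z_2$-summand of $\pi_5(L_m^4)$, so $2\tilde\eta_4=0$, hence $2\Sigma\tilde\eta_4=0$ in $\Z_4\{j_1^3\nu'\}\oplus\Z_2\{j_2^5\eta_5\}$, forcing $2a\equiv 0\pmod 4$, i.e.\ $a\in\{0,2\}$. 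With that order argument inserted, your proof closes; no cell-by-cell construction or Toda bracket for $\tilde\eta_4$ is needed.
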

\begin{proof}For $m\geq 2$, we have the fibration sequence  $F_{\Sigma p_m^L}\rightarrow \Sigma L_{m}^4=S^3\vee S^5\xrightarrow{\Sigma p_m^L=p_2^5} S^5$ with $Sk_7F_{\Sigma p_m^L}=S^3\vee S^7$.
	From the Lemma \ref{lemm Suspen Fp}, there is a map $\phi_1: F_{p_m^L}\rightarrow \Omega F_{\Sigma p_m^L}$ such that the following diagram commutative:
	\begin{align}
		\small{\xymatrix{
				\pi_5(S^2) \ar@{_{(}->}[r]_-{j_{p_{m}^L\ast}}\ar[d]^{\Sigma}\ar@/^1pc/[rr]^{j^L_{m\ast}}&\pi_5(F_{p_{m}^L})\ar[r]_{\tau_{m\ast}^L}\ar[d]^{\phi_{1\ast}}& \pi_5(L_m^4)\ar[d]^{\Sigma}\ar[r]_{p_{m\ast}^L}& \pi_5(S^4)\ar[d]^{\Sigma~\cong}\\
				\pi_6(S^3)\ar@{^{(}->}[r]^-{j_{\Sigma p_{m}^L\ast}\cong}\ar@/_1pc/[rr]_{j^3_{1\ast}}&\pi_6(F_{\Sigma p_{m}^L}) \ar@{^{(}->}[r] & \pi_6(S^3\vee S^5)=\Z_4\{j_1^3\nu'\}\oplus\Z_2\{j_2^5\eta_5\}  \ar[r]^-{ p_{2\ast}^5}& \pi_6(S^5)
		} }. \nonumber
	\end{align}
	Thus we get the results of this lemma from the above commutative diagram.
\end{proof}

\begin{lemma}\label{lemma lambda3}
	$\gamma_3=\pm3\cdot 2^r\beta_{r+1} \in \pi_5(L_{r+1}^4)$ for  $r\geq 1$.
\end{lemma}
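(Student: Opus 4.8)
The plan is to locate $\gamma_3$ inside $\pi_5(L^4_{r+1})$. By Lemma~\ref{lem:pi5(Lm4)} (case $m=r+1>1$) we have $\pi_5(L^4_{r+1})=\Z_{(2)}\{\beta_{r+1}\}\oplus\Z_2\{j^L_{r+1}\eta_2^3\}\oplus\Z_2\{\tilde\eta_4\}$, so I write $\gamma_3=a\,\beta_{r+1}+b\,j^L_{r+1}\eta_2^3+c\,\tilde\eta_4$ with $a\in\Z_{(2)}$, $b,c\in\Z_2$. The free coefficient $a$ is the crux and carries the factor $\pm3\cdot2^r$; it will be identified by naturality of $\gamma_3$ together with I.\,M.\,James' cup-product computation for $\Omega S^3$. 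The torsion coefficients will then be forced to vanish because a higher-order Whitehead product has trivial suspension.

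For $a$: apply the naturality of $\gamma_3$ proved in Theorem~\ref{Thm Jn(Mf,X)} (and the naturality square stated just after its proof) to the commuting square of self-maps of $S^2$ with $f=2^r\iota_2$, $f_1=\mathrm{id}_{S^2}$, $\mu_Y=\iota_2$, $\mu_X=2^r\iota_2$. Since $J^{2^r\iota_2}_2(S^2,S^2)\simeq L^4_{r+1}$, $J^{\mathrm{id}}_2(S^2,S^2)\simeq L^4_1$ and $J^{\mathrm{id}}_3(S^2,S^2)\simeq J_3(S^2)=Sk_6(\Omega S^3)$, this produces a map $\psi\colon L^4_{r+1}\to L^4_1$ with
\[
\gamma^{\Omega S^3}_3\circ\bigl(\deg 4^r\bigr)=\psi_\ast(\gamma_3)\in\pi_5(L^4_1),
\]
where $\gamma^{\Omega S^3}_3$ is the attaching map of the top cell of $J_3(S^2)$ and $\deg 4^r$ denotes the self-map $\Sigma^2(\iota_1\wedge 2^r\iota_1\wedge 2^r\iota_1)$ of $S^5$. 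Now I invoke two facts. (i) The free part of $\gamma^{\Omega S^3}_3$ is $\pm3\beta_1$; this is Theorem~3.3 of \cite{James cup-prod} — the $3$ being the divided-power coefficient $\binom31$ in $H^\ast(\Omega S^3;\Z)$ — and it can equivalently be recovered from the known $\pi_5(P^3(2))$ of \cite{Mukai4,WJ Proj plane}. (ii) The map $\psi$ has degree $1$ on $H_2$ and degree $2^r$ on $H_4$ (forced by $x_2^2=2^{r+1}x_4$ in $L^4_{r+1}$ against $x_2^2=2x_4$ in $L^4_1$), so on $\pi_5(L^4_{r+1})\otimes\Q$ — rationally one-dimensional, spanned by the Whitehead product $[\iota_2,\iota_4]$ — it is multiplication by $2^r$; hence $\psi_\ast\beta_{r+1}=\pm2^r\beta_1$ modulo torsion. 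Comparing the $\beta_1$-coefficients in the displayed identity (the torsion part of $\gamma^{\Omega S^3}_3$, which lies in $\Z_4\{\tilde\eta_4\}$, dies under $\deg 4^r$ because $4\mid 4^r$) gives $\pm2^r a=\pm3\cdot4^r$, i.e. $a=\pm3\cdot2^r$.

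For $b$ and $c$: by Remark~\ref{Remark of Def HOWP}(4), $\Sigma\gamma_3=0$ in $\pi_6(\Sigma L^4_{r+1})=\pi_6(S^3\vee S^5)=\Z_4\{j_1^3\nu'\}\oplus\Z_2\{j_2^5\eta_5\}$. Since $p^L_{r+1}\beta_{r+1}=0$, the class $\Sigma\beta_{r+1}=\delta\,j_1^3\nu'$ lies in the kernel of $\pi_6(S^3\vee S^5)\to\pi_6(S^5)$ for some $\delta\in\Z_4$; combining this with Lemma~\ref{lemma Sigma Tpi5(Lm4)} ($\Sigma(j^L_{r+1}\eta_2^3)=2j_1^3\nu'$, $\Sigma\tilde\eta_4=j_2^5\eta_5+2\varepsilon j_1^3\nu'$), the $j_2^5\eta_5$-coordinate of $\Sigma\gamma_3=0$ gives $c=0$ at once, and the $j_1^3\nu'$-coordinate gives $a\delta+2b\equiv0\pmod4$; with $a=\pm3\cdot2^r$ this forces $b=0$ for $r\geq2$ (then $a\equiv0$), and for $r=1$ one uses the extra input that $\Sigma\beta_2$ is even-divisible (computed directly, or read off from the known $\pi_5(P^3(2))$), again giving $b=0$. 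Thus $\gamma_3=\pm3\cdot2^r\beta_{r+1}$. I expect the main obstacle to be fact (ii) — pinning $\psi_\ast$ on the \emph{free} summand to the precise factor $\pm2^r$ rather than merely to a nonzero $2$-local integer — which is why I route it through the cup-product and degree bookkeeping; a secondary nuisance is that $\gamma_3$ is a priori only given as a member of an $r$-th order Whitehead product coset, whose indeterminacy (by Lemma~\ref{lemma for [a,b,c]}) is built from generalized Whitehead products against $\pi_4(L^4_{r+1})\cong\Z_2$, so one must check this indeterminacy is torsion and hence affects at most $b,c$, never $a$.
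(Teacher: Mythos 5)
Your torsion analysis is essentially the paper's own: suspend, use Remark~\ref{Remark of Def HOWP}(4) and Lemma~\ref{lemma Sigma Tpi5(Lm4)}, read $c=0$ off the $\eta_5$-coordinate, and kill the last $\Z_2$-coefficient using $4\mid 3\cdot 2^r$ for $r\geq 2$ and Mukai's $\Sigma(6\beta_2)=0$ for $r=1$. For the free coefficient the two routes genuinely differ. The paper maps $L^4_{r+1}$ to $\mathbb{C}P^2=L^4_0$ by $\bar{\chi}$, uses naturality of the higher Whitehead product \emph{set} (Theorem~\ref{Naturality HOWP} and Corollary~\ref{Naturality HOWP equivalence}(a)) to land in $2^{2r}[j^L_0,j^L_0,j^L_0]$, quotes Porter's $[j^L_0,j^L_0,j^L_0]=\{\pm 6\beta_0\}$ \cite{Porter Postnikov}, and kills the indeterminacy with $\pi_4(\mathbb{C}P^2)=0$ via Lemma~\ref{lemma for [a,b,c]}. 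You instead map to $J_3(S^2)\subset\Omega S^3$ and use naturality of the \emph{specific} attaching map $\gamma_3$ from Theorem~\ref{Thm Jn(Mf,X)}, feeding in James' cup products (equivalently $\pi_5(\Omega S^3)\cong\Z_{12}$, which does force the coefficient $\pm 3$ once one notes that only $|u|=3$ makes $(\Z_{(2)}\{\beta_1\}\oplus\Z_4\{\tilde\eta_4\})/\langle u\beta_1+v\tilde\eta_4\rangle$ have the right order). A pleasant by-product of working with the actual attaching map rather than the coset is that your closing worry about Whitehead-product indeterminacy is unnecessary; both $\gamma_3$ and $\gamma_3^{\Omega S^3}$ are concrete classes and the naturality square relates them on the nose. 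Each approach buys something: the paper's needs Porter's triple-product computation but handles $\beta$-bookkeeping automatically; yours replaces that input by classical facts about $\Omega S^3$.

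The one step you should not leave at the level of degree bookkeeping is (ii), $\psi_\ast\beta_{r+1}=\pm 2^r\beta_1$ modulo torsion. ``Multiplication by $2^r$ on $\pi_5\otimes\Q$'' is only meaningful after normalizing generators: on the minimal model $(\Lambda(a_2,b_5),\,db=a^3)$ the induced map fixes $b$, so the factor $2^r$ lives entirely in the comparison of the integral generators $\beta_{r+1}$ and $\beta_1$, i.e.\ it requires knowing that each $\beta_m$ has cup-product invariant $\pm 1$ (the coefficient of the top class in $x_2\cup x_4$ of its mapping cone), not merely that it generates the free summand; also $\pi_4(L^4_m)\cong\Z_2$, so there is no class ``$\iota_4$'' and no Whitehead product $[\iota_2,\iota_4]$ to span the rational $\pi_5$. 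The clean repair is the device the paper uses for $\bar{\chi}$: since $\beta_m:=\tau^L_m j^{S^5}_m$ is defined through the fibration over $S^4$, Lemma~\ref{J(X,A)toJ(X',A')} identifies the induced map of homotopy fibers on $Sk_6=S^2\vee S^5$ as $\iota_2\vee(\pm 2^r\iota_5)$ up to a torsion correction along $j_1^2\eta_2^3q_2^5$, which gives $\psi_\ast\beta_{r+1}=\pm 2^r\beta_1+(\text{torsion})$ directly. With that substitution your argument closes and yields $a=\pm 3\cdot 2^r$ exactly as claimed.
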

\begin{proof}
	Assume that $\gamma_3=aj_{r+1}^L\eta_2^3+b\beta_{r+1}+c\tilde{\eta}_4\in  \pi_5(L_{r+1}^4)$ for some $a,c\in \Z_2$, $b\in \Z$.
	
	The following commutative diagram  is induced by (\ref{diagrams 1 for Lm4 to CP2})
	
	$
	\small{\xymatrix{
			\pi_5(L_m^4) \ar[d]^{\bar{\chi}_\ast}&\pi_5(F_{p_{m}^L})\ar@{_{(}->}[l]\ar[d]^{\chi_\ast}& \pi_5(J_2(M_{2^m\eta_2},S^3))=\pi_5(S^2\vee S^5)\ar@{_{(}->}[l]^-{\cong}\ar[d]^{(\chi|_{J_2})_{\ast}}\ar[r]^-{proj}&\pi_5(S^5)\ar[d]_{(2^m\iota_5)_{\ast}}\\
			\pi_5(\mathbb{C}P^2)&\pi_5(F_{p_{0}^L})\ar[l]& \pi_5(J_2(M_{\eta_2},S^3))=\pi_5(S^2\vee S^5)\ar@{_{(}->}[l]^-{\cong}\ar[r]^-{proj}&\pi_5(S^5)
	} }; $
	
	since $\pi_5(\mathbb{C}P^2)=\Z_{(2)}\{\beta_0\}$ is a torsion free group, take $m=r+1$ above to get
	\begin{align}
		\bar{\chi}_\ast(\gamma_3)=\bar{\chi}_\ast(aj_{r+1}^L\eta_2^3+b\beta_{r+1}+c\tilde{\eta}_4)=b\bar{\chi}_\ast(\beta_{r+1})=\pm2^{r+1}\beta_0. \label{kappa(beta)}
	\end{align}
	On the other hand,
	\begin{align}
		&\bar{\chi}_\ast(\gamma_3)\in \bar{\chi}_\ast[j^L_{r+1}, j^L_{r+1}(2^r\iota_2),j^L_{r+1}(2^r\iota_2)]\subset  [\bar{\chi}j^L_{r+1}, \bar{\chi}j^L_{r+1}(2^r\iota_2),\bar{\chi}j^L_{r+1}(2^r\iota_2)] \nonumber\\
		=&   [j^L_{0}, j^L_{0}(2^r\iota_2),j^L_{0}(2^r\iota_2)]\supset  2^{2r} [j^L_{0}, j^L_{0},j^L_{0}]~~~~~~~~~~\text{( (a) of Corollary \ref{Naturality HOWP equivalence})} \nonumber
	\end{align}
	From Corollary 2 of \cite{Porter Postnikov}, the set $[j^L_{0}, j^L_{0},j^L_{0}]$ has only one element $-6\beta_0$ or $6\beta_0\in \pi_5(\mathbb{C}P^2)$.
	
	Thus$-6\cdot  2^{2r} \beta_0$ or $6\cdot  2^{2r} \beta_0 \in [j^L_{0}, j^L_{0}(2^r\iota_2),j^L_{0}(2^r\iota_2)]$.
	
	By Lemma \ref{lemma for [a,b,c]}, $[j^L_{0}, j^L_{0}(2^r\iota_2),j^L_{0}(2^r\iota_2)]$ is a coset of subgroup
	\begin{align}
		[\pi_4(\mathbb{C}P^2), j^L_{0}(2^r\iota_2)]+[\pi_4(\mathbb{C}P^2), j^L_{0}(2^r\iota_2)]+[\pi_4(\mathbb{C}P^2), j^L_{0}\iota_2] \nonumber
	\end{align}
	which is zero since $\pi_4(\mathbb{C}P^2)=0$ (Lemma 2.4 of \cite{Yamaguchi}). Thus
	\begin{align}
		&[j^L_{0}, j^L_{0}(2^r\iota_2),j^L_{0}(2^r\iota_2)]=\{6\cdot  2^{2r} \beta_0 \} ~\text{or}~\{-6\cdot  2^{2r} \beta_0 \} \nonumber\\
		\Rightarrow&~~~~~~~ \bar{\chi}_\ast(\gamma_3)=\pm   6\cdot  2^{2r} \beta_0\nonumber.
	\end{align}
	Comparing with (\ref{kappa(beta)}), we get
	\begin{align}
		b=\pm 3\cdot 2^r\nonumber.
	\end{align}
	From 4) of Remark \ref{Remark of Def HOWP} and Lemma \ref{lemma Sigma Tpi5(Lm4)}
	\begin{align}
		&0=\Sigma \gamma_3=2aj_1^3\nu'\pm 3\cdot 2^r\Sigma(\beta_{r+1})+c(j_2^5\eta_5+2\varepsilon j_1^3\nu') \nonumber\\
		&=(2a+2c)j_1^3\nu'\pm 3\cdot 2^r\Sigma(\beta_{r+1})+cj_2^5\eta_5\nonumber.
	\end{align}
	Note that: for $r\geq 2$, $3\cdot 2^r\Sigma(\beta_{r+1})=0$;
	for $r=1$, by Lemma 2.3 of \cite{Mukai4}, $6\beta_2=6\tau_3$, where the definition of $\tau_3$ comes from Lemma 2.2 of \cite{Mukai4}.  So $\Sigma 6\beta_2=0$ from the proof of Lemma 3.3 of \cite{Mukai4}.
	
	Thus $a,c=0$ for $r\geq 1$.
\end{proof}

\subsection{Calculation of $\pi_{k}(P^{3}(2^r)), k=5,6$}
\begin{theorem}\label{thmpi5(M3)}
	$\pi_{5}(P^{3}(2^r))\cong
	\left\{
	\begin{array}{ll}
		\Z_2\oplus\Z_2\oplus\Z_2 , & \hbox{$r=1$;} \\
		\Z_2\oplus\Z_2\oplus\Z_2\oplus\Z_{2^r}, & \hbox{$r\geq 2$.}
	\end{array}
	\right.
	$
\end{theorem}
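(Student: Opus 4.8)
The plan is to apply the three-step machinery developed in Section \ref{sec: 3} to the map $f = 2^r\iota_2 : S^2 \to S^2$, whose mapping cone is $P^3(2^r)$. By Theorem \ref{Thm Jn(Mf,X)} the relevant skeleton of the homotopy fiber is $Sk_6(F_{p_r}) \simeq J_3(M_{2^r\iota_2},S^2) \simeq S^2\cup_{\gamma_2}CS^3\cup_{\gamma_3}CS^5$, with $\gamma_2 = 2^{r+1}\eta_2$ and $\gamma_3$ identified by Lemma \ref{lemma lambda3} as $\pm 3\cdot 2^r\beta_{r+1}\in\pi_5(L^4_{r+1})$. So $Sk_6(F_{p_r})$ is the cofiber of $\pm 3\cdot 2^r\beta_{r+1}: S^5\to L^4_{r+1}$, and the first task is to compute $\pi_5(F_{p_r}) = \pi_5(Sk_6(F_{p_r}))$ from the cofibration $S^5\xrightarrow{\pm3\cdot2^r\beta_{r+1}} L^4_{r+1}\to Sk_6(F_{p_r})$. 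Feeding in Lemma \ref{lem:pi5(Lm4)} for $\pi_5(L^4_{r+1})$ (which for $r\geq 1$ means $m=r+1\geq 2$, so $\pi_5(L^4_{r+1})=\Z_{(2)}\{\beta_{r+1}\}\oplus\Z_2\{j^L_{r+1}\eta_2^3\}\oplus\Z_2\{\tilde\eta_4\}$) together with $\pi_6(L^4_{r+1})$ from Lemma \ref{lem:pi6(Lm4)}, the Puppe exact sequence
$\pi_6(S^5)\to\pi_6(L^4_{r+1})\to\pi_6(Sk_6 F_{p_r})\to\pi_5(S^5)\xrightarrow{(\pm3\cdot2^r\beta_{r+1})_\ast}\pi_5(L^4_{r+1})$
should yield $\pi_5(Sk_6 F_{p_r})$: the map $(\pm3\cdot2^r\beta_{r+1})_\ast$ sends $\iota_5$ to $\pm3\cdot2^r\beta_{r+1}$, an element of order $2^r$ in the free summand $\Z_{(2)}\{\beta_{r+1}\}$ (after 2-localization $3$ is a unit), so its kernel governs how much of $\pi_5(S^5)$ survives.

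Next I would run Step 2: from Lemma \ref{partial calculate1} the connecting map $\partial$ factors through $f = 2^r\iota_2$, giving the homotopy-commutative square relating $(\partial_r)_{k\ast}:\pi_{k+1}(S^3)\to\pi_k(F_{p_r})$ to multiplication by $2^r$ in the appropriate degree. I would use $\pi_6(S^3)=\Z_4\{\nu'\}$ and $\pi_5(S^3)=\Z_2\{\eta_3^2\}$ to compute $\mathrm{Coker}(\partial_r)_{5\ast}$ and $\mathrm{Ker}(\partial_r)_{4\ast}$ in
$\pi_6(S^3)\xrightarrow{(\partial_r)_{5\ast}}\pi_5(F_{p_r})\to\pi_5(P^3(2^r))\xrightarrow{p_{r\ast}}\pi_5(S^3)\xrightarrow{(\partial_r)_{4\ast}}\pi_4(F_{p_r})$.
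Here I expect $(\partial_r)_{4\ast}$ to vanish for $r\geq 1$ (the relevant suspension-compatible multiplication by $2^r$ kills $\eta$-type classes), so $\pi_5(S^3)=\Z_2$ contributes a $\Z_2$; and $(\partial_r)_{5\ast}(\nu')$ must be computed explicitly — this is where I would invoke the analogue of equation (\ref{partialm on pi7S4}) / the computations in \cite{ZP A_3^2} transported through the inclusion $S^2\vee S^5\simeq Sk_6 F_{p_r}$-type description, to pin down $(\partial_r)_{5\ast}(\nu')$ as (a multiple of) the $\eta_2\nu'$-class, hence its cokernel.

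Finally Step 3: assemble $\pi_5(P^3(2^r))$ from the short exact sequence $0\to\mathrm{Coker}(\partial_r)_{5\ast}\to\pi_5(P^3(2^r))\to\mathrm{Ker}(\partial_r)_{4\ast}\to 0$. For $r=1$ I expect the three $\Z_2$ summands to come out (consistent with Mukai \cite{Mukai4} and Wu \cite{WJ Proj plane}), and for $r\geq 2$ an extra $\Z_{2^r}$ appearing from the surviving part of $\pi_5(S^5)=\Z$ under $\gamma_3$ (since $\pm3\cdot2^r$ has order exactly... well, the class $\beta_{r+1}$ lives in a torsion-free summand, so $\iota_5$ maps to an element whose multiples by $2^r$ and below are nonzero, forcing a $\Z_{2^r}$ quotient). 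Since the extension will be of a $2^r$-torsion class by $2$-torsion classes, I must check it splits; I would do this by exhibiting explicit order-$2$ and order-$2^r$ lifts (as in the $\bar\lambda\tilde\eta_4^2$ trick of Lemma \ref{lem:pi6(Lm4)}) or by a $\pi_5$-mod-$2$ dimension count. The main obstacle I anticipate is precisely the determination of $(\partial_r)_{5\ast}(\nu')$ together with the splitting of the final extension: everything else is bookkeeping with the known groups $\pi_\ast(S^n)$ and Lemmas \ref{lem:pi5(Lm4)}–\ref{lemma lambda3}, but identifying the connecting homomorphism on $\nu'$ requires carefully tracking the decomposition $Sk_6(F_{p_r})\simeq L^4_{r+1}\cup_{\gamma_3}CS^5$ and how $\partial$ interacts with it — likely via the naturality square of Lemma \ref{J(X,A)toJ(X',A')} applied to the map $P^3(2^r)\to P^3(2)=P^3(2^1)$ (or to $S^2\xrightarrow{2^r} S^2$ composed with projections) to reduce to the already-known $r=1$ case.
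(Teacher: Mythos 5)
Your plan follows essentially the same route as the paper: compute $\pi_5(F_{p_r})\cong\pi_5(J_3(M_{2^r\iota_2},S^2))$ from the cofibration attached by $\gamma_3=\pm3\cdot2^r\beta_{r+1}$, determine the connecting maps by naturality with respect to the comparison map between the fibrations for $P^3(2)$ and $P^3(2^r)$ (reducing $(\partial_r)_{5\ast}(\nu')$ to Mukai's value $(\partial_1)_{5\ast}(\nu')=j_{p_1}\eta_2^3$, which then vanishes for $r\geq 2$ because $\psi^1_{r}j_{p_1}=2^{r-1}j_{p_r}$), and split the resulting extension by pushing forward an order-$2$ lift of $\eta_2^3$ from the known case $r=1$. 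One small correction: $(\partial_r)_{5\ast}(\nu')$ lies in $\pi_5(F_{p_r})$, so it cannot be a multiple of an $\eta_2\nu'$-type class (that lives in degree $6$); the relevant target class is $j_{p_r}\eta_2^3$.
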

\begin{proof}
	There is a cofibration sequence
	\begin{align}
		S^5\xrightarrow{\gamma_3}L_{r+1}^4\simeq J_{2}(M_{2^r\iota_2}, S^2) \xrightarrow{I_3} J_{3}(M_{2^r\iota_2}, S^2)\xrightarrow{p_{J_3}^M} S^6, \label{Cof of J3M2r}
	\end{align}
	which induces the following exact sequence by Theorem 1.16 of \cite{Cohen}
	\begin{align}
		\pi_5(S^5)\xrightarrow{\gamma_{3\ast}}\pi_5(L_{r+1}^4)\rightarrow \pi_5(J_{3}(M_{2^r\iota_2}, S^2))\rightarrow 0\nonumber.
	\end{align}
	From the Lemma \ref{lemma Sigma Tpi5(Lm4)} and Lemma \ref{lemma lambda3}, it is easy to get (under 2-localization)
	\begin{align}
		\pi_5(F_{p_r})\cong \pi_5(J_{3}(M_{2^r\iota_2},S^2))=\Z_{2^r}\{I_3\beta_{r+1}\}\oplus\Z_{2}\{I_3j_{r+1}^L\eta_2^3\}\oplus\Z_{2}\{ I_3\tilde{\eta}_4\}\nonumber.
	\end{align}
	We have the following  two commutative diagrams for cofibration sequences left and fibration sequences right respectively for $s\leq r$
	\begin{align}
		\small{\xymatrix{
				S^2 \ar@{=}[d]\ar[r]^-{2^s\iota_2}&S^2\ar[d]_-{2^{r-\!s}\iota_2}\ar[r]^-{i_s} & P^{3}(2^s) \ar[d]_-{\bar{\psi}_{r}^{s}}\ar[r]^-{p_s}& S^3\ar@{=}[d]\\
				S^2 \ar[r]^-{2^r\iota_2}&S^2 \ar[r]^-{i_r}& P^{3}(2^r)\ar[r]^-{p_r}& S^3
		} }, ~~\small{\xymatrix{
				\Omega S^3 \ar@{=}[d]\ar[r]^-{\partial_s }&F_{p_{s}}\ar[d]_-{\psi_{r}^{s}}\ar[r]^-{i_{p_s}} &P^{3}(2^s) \ar[d]_-{\bar{\psi}_{r}^{s}}\ar[r]^-{p_s}& S^3\\
				\Omega S^3 \ar[r]^-{\partial_r }&F_{p_{r}} \ar[r]^-{i_{p_r}}& P^{3}(2^r)\ar[r]^-{p_r}& S^3 \ar@{=}[u]
		} }
		. \label{diagram M2s to M2r}
	\end{align}
	Then the above right diagram  induces the following  commutative diagram of exact sequences for $s=1$
	\begin{align}
		\footnotesize{\xymatrix{
				\pi_6( S^3) \ar@{=}[d]\ar[r]^-{(\partial_1)_{5\ast} }&\pi_5(F_{p_{1}})\ar[d]_{\psi_{r\ast}^1}\ar[r]^{i_{p_1\ast}} & \pi_5(P^{3}(2) )\ar[d]_{\bar{\psi}_{r\ast}^1}\ar[r]^{p_{1\ast}}& \pi_{5}(S^3)\ar[r]^-{(\partial_1)_{4\ast} }&\pi_4(F_{p_{1}})\ar[d]_{\psi_{r\ast}^1}\\
				\pi_{6}( S^3 )\ar[r]^-{(\partial_r)_{5\ast} }&\pi_5(F_{p_{r}})\ar[r]^{i_{p_r\ast}}& \pi_5(P^{3}(2^r)) \ar[r]^{p_{r\ast}}& \pi_{5}(S^3) \ar[r]^-{(\partial_r)_{4\ast} }\ar@{=}[u] &\pi_4(F_{p_{r}})
		} } \label{diam p5 M1 to Mr}
	\end{align}
	Using Lemma \ref{partial calculate1}, it is easy to get $Ker (\partial_{r})_{4\ast}=\pi_5(S^3)= \Z_2\{\eta_3^2\}$ for $r\geq 1$.
	
	Lemma \ref{J(X,A)toJ(X',A')} implies 
	\begin{align*}
		\psi_{r}^1 j_{p_1}=j_{p_r}2^{r-1}\iota_2=2^{r-1}j_{p_r}: \footnotesize{\xymatrix{S^2\ar@/_1pc/[rr]_{2^{r-1}j_{p_r}}\ar@{^{(}->}[r]^{j_{p_1}} &  F_{p_{1}}\ar[r]^{\psi_{r}^1} &F_{p_{r}} }}
	\end{align*}
	and from the proof of Theorem 1.1 of \cite{Mukai4}, we get  $(\partial_1)_{5\ast}(\nu')=j_{p_1}\eta_{2}^3$. So
	\begin{align}
		(\partial_r)_{5\ast}(\nu')\!=\!\psi_{r\ast}^1(\partial_1)_{5\ast}(\nu')\!=\!\psi_{r\ast}^1 (j_{p_1}\eta_{2}^3)\!=\! j_{p_r\ast}(2^{r-1}\iota_2)_\ast(\eta_{2}^3)\!=\!0 ~(r\geq 2)\label{partial5 on nu'}
	\end{align}
	which implies $Coker (\partial_{r})_{5\ast}=\pi_5(F_{p_{r}})$. Hence there is the  following short exact sequence
	\begin{align}
		\small{\xymatrix{
				0\ar[r]&\pi_5(F_{p_{r}})\ar[r]^{i_{p_r\ast}}& \pi_5(P^{3}(2^r)) \ar[r]^-{p_{r\ast}}& \pi_{5}(S^3)=\Z_2\{\eta_2^3\} \ar[r] &0
		} } \label{exact of Pi5(Mr)}
	\end{align}

	Since  $\pi_{5}(P^{3}(2))\cong \Z_2\oplus\Z_2\oplus\Z_2$, there is an element $\xi_1\in \pi_{5}(P^{3}(2))$ with order 2 which is a lift of  $\eta_2^3$. Thus by the commutative diagram (\ref{diam p5 M1 to Mr}), $\bar{\psi}_{r}^1\xi_1\in \pi_{5}(P^{3}(2^r))$ is also a lift  of  $\eta_2^3$ with order 2. Thus the short exact sequence (\ref{exact of Pi5(Mr)}) splits.
	
	Hence  $\pi_5(P^{3}(2^r))\cong \pi_5(F_{p_{r}})\oplus \pi_{5}(S^3)\cong \Z_2\oplus\Z_2\oplus\Z_2\oplus\Z_{2^r}$  for $r\geq 2$.
\end{proof}

\begin{lemma}\label{lem:pi6(J3M2r)}
	$\pi_6(F_{p_{r}})\cong \pi_6(J_3(M_{2^r\iota_2},S^3))\cong
	\left\{
	\begin{array}{ll}
		\Z_2\oplus \Z_2\oplus \Z_2\oplus \Z_2, & \hbox{$r=1$;} \\
		\Z_2\oplus \Z_2\oplus \Z_4\oplus \Z_{2^{r}} , & \hbox{$r\geq 2$.}
	\end{array}
	\right.
	$
\end{lemma}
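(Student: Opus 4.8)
The plan is to push the exact-sequence argument that produced $\pi_{5}(F_{p_{r}})$ up by one dimension, starting from the cofibration (\ref{Cof of J3M2r})
\[
S^5\xrightarrow{\gamma_3}L_{r+1}^4\simeq J_{2}(M_{2^r\iota_2}, S^2)\xrightarrow{I_3} J_{3}(M_{2^r\iota_2}, S^2)\xrightarrow{p_{J_3}^M} S^6 ,
\]
and recalling that $\pi_6(F_{p_r})\cong\pi_6\big(J_{3}(M_{2^r\iota_2},S^2)\big)$ by Theorem \ref{Thm Jn(Mf,X)}. Write $J_3:=J_{3}(M_{2^r\iota_2},S^2)$, and work under $2$-localisation as in the rest of the section. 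Since $J_3$ is obtained from the simply connected space $L_{r+1}^4$ by attaching a single $6$-cell along $\gamma_3$, the Blakers--Massey theorem shows that $\pi_k(J_3,L_{r+1}^4)\to\pi_k(S^6)$ is an isomorphism for $k\le 6$ and an epimorphism for $k=7$, with kernel in degree $7$ generated by the relative Whitehead product $[\,j^L_{r+1},u\,]$, where $u\in\pi_6(J_3,L_{r+1}^4)\cong\Z_{(2)}$ is the generator lying over $\iota_6$ and $j^L_{r+1}\in\pi_2(L_{r+1}^4)\cong\Z_{(2)}$ is the bottom-cell class.

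First I would show that $I_{3\ast}\colon\pi_6(L_{r+1}^4)\to\pi_6(J_3)$ is surjective. In the homotopy exact sequence of the pair $(J_3,L_{r+1}^4)$ the boundary $\partial\colon\pi_6(J_3,L_{r+1}^4)\to\pi_5(L_{r+1}^4)$ sends $u$ to the attaching map $\gamma_3$, and by Lemma \ref{lemma lambda3} and Lemma \ref{lem:pi5(Lm4)} one has $\gamma_3=\pm 3\cdot 2^{r}\beta_{r+1}$ with $\beta_{r+1}$ generating a free $\Z_{(2)}$-summand of $\pi_5(L_{r+1}^4)$; hence $\gamma_3$ has infinite order, $\partial$ is injective, $\pi_6(J_3)\to\pi_6(J_3,L_{r+1}^4)$ is zero, and $I_{3\ast}$ is onto. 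Thus $\pi_6(J_3)\cong\pi_6(L_{r+1}^4)/\partial(\pi_7(J_3,L_{r+1}^4))$, and by the presentation of $\pi_7(J_3,L_{r+1}^4)$ above, $\partial(\pi_7(J_3,L_{r+1}^4))$ is generated by $\partial[\,j^L_{r+1},u\,]=\pm[\,j^L_{r+1},\gamma_3\,]$ and by $\partial\tilde u=\gamma_3\eta_5$, where $\tilde u$ is any lift of $\eta_6\in\pi_7(S^6)$.

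Next I would evaluate these two classes in $\pi_6(L_{r+1}^4)$ using Lemma \ref{lem:pi6(Lm4)} (with $m=r+1$). Since $\beta_{r+1}\eta_5$ has order $2$ and $r\ge 1$, we get $\gamma_3\eta_5=\pm 3\cdot 2^{r}\beta_{r+1}\eta_5=0$. For the Whitehead product, bilinearity (Corollary \ref{Naturality HOWP equivalence}) gives $[\,j^L_{r+1},\gamma_3\,]=\pm 3\cdot 2^{r}[\,j^L_{r+1},\beta_{r+1}\,]$, and naturality of the generalised Whitehead product (Theorem \ref{Naturality HOWP}) applied to the map $J^{LF}_{r+1}=\tau_{r+1}^L j_{r+1}^F\colon S^2\vee S^5\to L_{r+1}^4$, using the relations $\tau_{r+1}^L j_{p_{r+1}^L}=j^L_{r+1}$, $\beta_{r+1}=\tau_{r+1}^L j_{r+1}^{S^5}$, $j_{p_{r+1}^L}=j_{r+1}^F j_1^2$ and $j_{r+1}^{S^5}=j_{r+1}^F j_2^5$, identifies
\[
[\,j^L_{r+1},\beta_{r+1}\,]=J^{LF}_{r+1}[\,j_1^2,j_2^5\,],
\]
the canonical generator of the cyclic summand $\Z_8$ (for $r=1$) or $\Z_{2^{r+1}}$ (for $r\ge 2$) of $\pi_6(L_{r+1}^4)$ appearing in Lemma \ref{lem:pi6(Lm4)}. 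Hence $\partial(\pi_7(J_3,L_{r+1}^4))$ is the cyclic group generated by $\pm 3\cdot 2^{r}J^{LF}_{r+1}[\,j_1^2,j_2^5\,]$, which (since $3$ is odd) equals $\langle\,2J^{LF}_2[j_1^2,j_2^5]\,\rangle$, of order $4$, when $r=1$, and $\langle\,2^{r}J^{LF}_{r+1}[j_1^2,j_2^5]\,\rangle$, of order $2$, when $r\ge 2$.

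Finally I would obtain $\pi_6(F_{p_r})=\pi_6(J_3)$ by dividing the group $\pi_6(L_{r+1}^4)$ of Lemma \ref{lem:pi6(Lm4)} by this subgroup of one of its cyclic summands: when $r=1$ the $\Z_8$-summand collapses to $\Z_2$ and the summand $\Z_2\{j^L_2\eta_2\nu'+2J^{LF}_2[j_1^2,j_2^5]\}$ becomes $\Z_2\{j^L_2\eta_2\nu'\}$, which leaves $\Z_2\oplus\Z_2\oplus\Z_2\oplus\Z_2$; when $r\ge 2$ the $\Z_{2^{r+1}}$-summand collapses to $\Z_{2^{r}}$, which leaves $\Z_2\oplus\Z_2\oplus\Z_4\oplus\Z_{2^{r}}$. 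I expect the main obstacle to be the parts lying beyond the metastable range: justifying the Blakers--Massey presentation of $\pi_7(J_3,L_{r+1}^4)$ as an extension of $\Z_2$ by $\langle[\,j^L_{r+1},u\,]\rangle$, and hence the Whitehead-product correction term in $\ker I_{3\ast}$, and pinning down the identification $[\,j^L_{r+1},\beta_{r+1}\,]=J^{LF}_{r+1}[\,j_1^2,j_2^5\,]$ with the correct generator (controlling any lower-filtration terms). As an alternative to the latter, $[\,j^L_{r+1},\beta_{r+1}\,]$ could be evaluated through the comparison maps $\bar\lambda$ and $\bar\chi$ used in the proof of Lemma \ref{lem:pi6(Lm4)}.
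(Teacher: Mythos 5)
Your proposal is correct and lands on exactly the same key computation as the paper, namely
\[
\pi_6(F_{p_r})\cong \pi_6(L^4_{r+1})\big/\bigl\langle 3\cdot 2^{\,r}\,J^{LF}_{r+1}[j_1^2,j_2^5]\bigr\rangle ,
\]
but you reach it by a recognizably different mechanism. The paper does not use the homotopy exact sequence of the pair $(J_3,L^4_{r+1})$ at all: it passes to the fibration $\Omega S^6\to F^M_{J_3}\to J_3(M_{2^r\iota_2},S^2)\xrightarrow{p^M_{J_3}}S^6$ and applies its own relative James construction a \emph{second} time, to the map $\gamma_3\colon S^5\to L^4_{r+1}$, obtaining $Sk_9(F^M_{J_3})\simeq J_2(M_{\gamma_3},S^5)\simeq L^4_{r+1}\cup_{\alpha_r}C(L^4_{r+1}\wedge S^4)$; the bottom cell of the cone is attached along $\alpha_r(j^L_{r+1}\wedge\iota_4)=[\,j^L_{r+1},\gamma_3\,]=\pm3\cdot2^{\,r}J^{LF}_{r+1}[j_1^2,j_2^5]$, and the two commutative squares of Lemma \ref{partial calculate1} replace your observations that $I_{3\ast}$ is onto (from $\gamma_3$ having infinite order) and that $\gamma_3\eta_5=0$. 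Your route instead runs the exact sequence of the pair and invokes the classical Blakers--Massey/James presentation of $\pi_7(J_3,L^4_{r+1})$ as an extension of $\pi_7(S^6)=\Z_2\{\eta_6\}$ by the relative Whitehead products $[\pi_2(L^4_{r+1}),u]$, with $\partial[\,j^L_{r+1},u\,]=\pm[\,j^L_{r+1},\gamma_3\,]$ and $\partial(u\eta_6)=\gamma_3\eta_5$. These are dual packagings of the same EHP-type correction term beyond the metastable range: the ingredient you flag as the main obstacle (the structure of $\pi_7(J_3,L^4_{r+1})$ in the first non-excisive degree) is precisely what the paper's Theorem \ref{Thm Jn(Mf,X)} supplies in the form of the cell structure of $Sk_7(F^M_{J_3})$, and it is also Theorem 2.1 of the cited paper of I.~M.~James, so it is available off the shelf. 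The remaining steps --- the vanishing of $\gamma_3\eta_5$ since $\beta_{r+1}\eta_5$ has order $2$, the identification $[\,j^L_{r+1},\beta_{r+1}\,]=J^{LF}_{r+1}[j_1^2,j_2^5]$ by naturality of the generalized Whitehead product, the removal of the unit $3$, and the final quotients $\Z_8/\langle2\rangle$ (with the generator $j^L_2\eta_2\nu'+2J^{LF}_2[j_1^2,j_2^5]$ collapsing to $j^L_2\eta_2\nu'$) and $\Z_{2^{r+1}}/\langle2^{\,r}\rangle$ --- coincide with the paper's and are carried out correctly.
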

\begin{proof}
	By (\ref{Cof of J3M2r}), we have  fibration sequence:
	\begin{align}
		\Omega S^6\xrightarrow{\partial_{J_3}^{M}}F_{J_3}^{M}\rightarrow J_3(M_{2^r\iota_2},S^3)\xrightarrow{p_{J_3}^M} S^6 \label{fiber J3M2r}
	\end{align}
	where $F_{J_3}^{M}$ is the homotopy fiber of $J_3(M_{2^r\iota_2},S^3)\xrightarrow{p_{J_3}^M} S^6$, with $Sk_{9}(F_{J_3}^{M})\simeq J_2(M_{\gamma_3}, S^5) $ $\simeq L_{r+1}^4\cup_{\alpha_r}C(L_{r+1}^4\wedge S^4)$. So there is a cofibration sequence
	\begin{align}
		S^6\simeq S^2\wedge S^4 \xrightarrow{\alpha_r(j_{r+1}^L\wedge \iota_4)} J_1(M_{\gamma_3}, S^5)=L_{r+1}^4 \rightarrow Sk_7(F_{J_3}^{M})\rightarrow S^7. \label{Cofiber Sk7FJ3M}
	\end{align}
	We have the following  two commutative diagrams for cofibration sequences left and fibration sequences right respectively.
	
	$  \small{\xymatrix{
			S^5 \ar@{=}[d]\ar[r]^-{\gamma_3}&L_{r+1}^4\ar[r] & J_3(M_{2^r\iota_2},\!S^3)\ar[r]^-{p_{J_3}^M}& S^6\ar@{=}[d]\\
			S^5\ar[r]^-{\pm 2^rj_2^5}&S^2\vee S^5\ar[u]_{(j_{r+1}^L, 3\beta_{r+1})} \ar[r]^-{\iota_2\vee i_r}& S^2\vee P^{6}(2^r)\ar[u]^{\bar{\omega}}\ar[r]^-{p_{\vee}}& S^6
	} }
	$, ~~~~$ \small{\xymatrix{
			\Omega S^6 \ar@{=}[d]\ar[r]&F_{J_3}^{M}\ar[r] &J_3(M_{2^r\iota_2},\!S^3) \\
			\Omega S^6 \ar[r]&F_{p_{\vee}}\ar[u]^{\omega} \ar[r]& S^2\vee P^{6}(2^r)\ar[u]^{\bar{\omega}}
	} }
	$.
	
	So there is  the following homotopy commutative diagram
	
	$\small{\xymatrix{
			S^2\wedge S^4 \ar[dr]_{j_1^2\wedge \iota_4}\ar[r]^-{j_{r+1}^L\wedge \iota_4}&L_{r+1}^4\wedge S^4\ar[r]^-{\alpha_r} & J_1(M_{\gamma_3},S^5)=L_{r+1}^4\ar[r]^-{I_2}&J_2(M_{\gamma_3},\!S^5)\\
			&(S^2\vee S^5)\wedge S^4\ar[u]_{(j_{r+1}^L, 3\beta_{r+1})\wedge \iota_4} \ar[r]_-{[id, \pm 2^{r}\!j_2^5]~~}& J_1(M_{\pm 2^rj_2^5},S^5)\simeq S^2\vee S^5\ar[u]^{\omega|_{J_1}}\ar[r]& J_2(M_{\pm 2^r\!j_2^5},\!S^5)\ar[u]^{\omega|_{J_2}}
	} }
	$
	\begin{align}
		&\alpha_r(j_{r+1}^L\wedge \iota_4)=(j_{r+1}^L, 3\beta_{r+1})[id, \pm 2^{r}\!j_2^5](j_1^2\wedge \iota_4))~~ (~\text{by}~\omega|_{J_1}=(j_{r+1}^L, 3\beta_{r+1}))\nonumber\\
		&=[(j_{r+1}^L, 3\beta_{r+1}), \pm 3\!\cdot\!2^r\beta_{r+1}]\Sigma(j_1^1\wedge \iota_4)=[(j_{r+1}^L, 3\beta_{r+1})j_1^2, \pm 3\!\cdot\!2^r\beta_{r+1}\iota_5]\nonumber\\
		&=[j_{r+1}^L,  \pm 3\cdot2^r\beta_{r+1}]=\pm 3\!\cdot\!2^r[j_{r+1}^L,  \beta_{r+1}]\nonumber\\
		&=\pm 3\!\cdot\!2^r[\tau_{r+1}^Lj_{r+1}^Fj_1^2,  \tau_{r+1}^Lj_{r+1}^Fj_2^5]=\pm 3\!\cdot\!2^rJ_{r+1}^{LF}[j_1^2,j_2^5].
		\label{ahpha r}
	\end{align}

	(\ref{fiber J3M2r}) induces the following  exact sequence with two commutative squares
	\begin{align}
		\small{\xymatrix{
				\pi_{7}( S^6)\ar[r]^-{(\partial_{J_3}^{M})_{6\ast}}&\pi_{6}(F_{J_3}^{M})\ar[r]&\pi_{6}(J_3(M_{2^r\iota_2},S^3))\ar[r]&\pi_{6}(S^6)\ar[r]^-{ (\partial_{J_3}^{M})_{6\ast}}& \pi_{5}(F_{J_3}^{M})\\
				\pi_{6}(S^5)\ar[r]^-{\gamma_{3\ast}}\ar[u]_{\Sigma \cong } &\pi_{6}(L_{r+1}^4)\ar[u]_{ I_{2\ast}}&&\pi_{5}( S^5)\ar[r]^-{\gamma_{3\ast}}\ar[u]_{\cong\Sigma } &\pi_{5}(L_{r+1}^4)\ar[u]_{I_{2\ast}} }} \label{exact of J3(Mr)}
	\end{align}
	By Lemma \ref{lemma lambda3}, for $r\geq 1$, $\gamma_{3\ast}$ in (\ref{exact of J3(Mr)}) is zero and isomorphic  in the left and right square respectively, then by (\ref{Cofiber Sk7FJ3M}), (\ref{ahpha r}) and Lemma \ref{lem:pi6(Lm4)}
	\begin{align}
		\pi_{6}(J_3(M_{2^r\iota_2},S^3))\!\cong\! \pi_{6}(F_{J_3}^{M})\!\cong\! \frac{\pi_{6}(L_{r+1}^4)}{\left\langle 3\!\cdot\!2^rJ_{r+1}^{LF}[j_1^2,j_2^5] \right \rangle} \!\cong\!
		\left\{\!\! \!
		\begin{array}{ll}
			\Z_2\!\oplus\! \Z_2\!\oplus \!\Z_2\!\oplus\! \Z_2, &\!\! \!\!\hbox{$r=1$;} \\
			\Z_2\oplus\Z_2\!\oplus \!\Z_4\!\oplus\! \Z_{2^{r}} , &\!\! \!\!\hbox{$r\geq 2$.}
		\end{array}
		\right.\nonumber
	\end{align}
\end{proof}

J. Mukai get that $\nu'\in\pi_6(S^3)$ has no lift in $\pi_{6}(P^{3}(2))$  in \cite{Morisugi Mukai lift}. However for $r\geq 2$, the lift in $\pi_{6}(P^{3}(2^r))$ of $\nu'$ exists.

\begin{lemma}\label{lem:lift of nu'}
	If $r\geq 2$, then there is a lift  $\tilde{\nu}_r'\in \pi_{6}(P^{3}(2^r))$ with order $4$ of $\nu'\in\pi_6(S^3)$  under the canonical quotient map $P^{3}(2^r)\xrightarrow{p_r} S^3$.
\end{lemma}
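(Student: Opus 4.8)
The plan is to handle existence and the order separately. Existence costs nothing: by (\ref{partial5 on nu'}) one has $(\partial_r)_{5\ast}(\nu')=0$ for $r\ge 2$, so in the homotopy exact sequence of the fibration $\Omega S^3\xrightarrow{\partial_r}F_{p_r}\xrightarrow{i_{p_r}}P^3(2^r)\xrightarrow{p_r}S^3$ the homomorphism $p_{r\ast}\colon\pi_6(P^3(2^r))\to\pi_6(S^3)=\Z_4\{\nu'\}$ is onto; pick any $\mu$ with $p_{r\ast}(\mu)=\nu'$. Applying $p_{r\ast}$ to $n\mu=0$ forces $4\mid n$, so the order of $\mu$ is a multiple of $4$, and the content of the lemma is to arrange it to be exactly $4$. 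The first step I would take is to reduce to the single value $r=2$: the comparison map $\bar\psi_{r+1}^{r}\colon P^3(2^r)\to P^3(2^{r+1})$ of (\ref{diagram M2s to M2r}) satisfies $p_{r+1}\circ\bar\psi_{r+1}^{r}=p_r$, hence it sends a lift of $\nu'$ to a lift of $\nu'$ whose order divides the original one; so an order-$4$ lift for $r$ produces one for $r+1$, and it is enough to build $\tilde\nu'_2\in\pi_6(P^3(4))$ of order $4$.

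For $r=2$ I would first make the relevant extension precise. Inserting $f=2^r\iota_2$ into Lemma \ref{partial calculate1}, the commuting square $j_p\circ(2^r\iota_2)=\partial_r\circ(\Omega\Sigma)$ of maps $S^2\to F_{p_r}$ shows, on $\pi_6$, that $(\partial_r)_{6\ast}$ restricted to the image of the suspension $E\colon\pi_6(S^2)\to\pi_6(\Omega S^3)=\pi_7(S^3)$ equals $j_{p\ast}\circ(2^r\iota_2)_\ast$; since $E$ is onto (classically $\pi_7(S^3)=\Z_2$ and the James--Hopf homomorphism $\pi_7(S^3)\to\pi_7(S^5)$ vanishes) and $(2^r\iota_2)_\ast$ annihilates $\pi_6(S^2)=\Z_4$ for $r\ge 2$, we get $(\partial_r)_{6\ast}=0$. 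Therefore $i_{p_r\ast}$ is injective and the exact sequence reduces to the short exact sequence
\[
0\longrightarrow\pi_6(F_{p_r})\xrightarrow{\;i_{p_r\ast}\;}\pi_6(P^3(2^r))\xrightarrow{\;p_{r\ast}\;}\Z_4 \longrightarrow 0,
\]
so the lemma is exactly the statement that this splits. For $r=2$, as $\pi_6(F_{p_2})\cong\pi_6(J_3(M_{4\iota_2},S^3))$ has exponent $4$ by Lemma \ref{lem:pi6(J3M2r)}, splitting is equivalent to $4\mu=0$ for every lift $\mu$; writing $4\mu=i_{p_2}(\xi)$ with $\xi\in\pi_6(F_{p_2})$ uniquely determined, the task becomes to prove $\xi=0$.

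The identification of $\xi$ — equivalently, the computation of $4\mu$ inside $\pi_6(P^3(4))$ — is the \emph{main obstacle}; existence of the lift and the vanishing of $(\partial_r)_{6\ast}$ are routine by comparison. The plan here is naturality into the spaces already analysed: push $\mu$ and its multiples through $\bar\lambda\colon P^4(2^m)\to L^4_m$ and $\bar\chi\colon L^4_m\to\mathbb{C}P^2$ of (\ref{diagram 1 M2m4 to Lm4}) and (\ref{diagrams 1 for Lm4 to CP2}), and compare with the explicit descriptions of $\pi_5$ and $\pi_6$ of $L^4_m$ and $\mathbb{C}P^2$, and of $(\partial^L_m)_{6\ast}$ on $\nu_4$ and $\Sigma\nu'$, recorded in Lemmas \ref{lem:pi5(Lm4)}--\ref{lem:pi6(Lm4)}; this should force the components of $\xi$ lying in the $\Z_4$-summands of $\pi_6(F_{p_2})$, while its $\Z_2$-components should fall to a suspension argument using $\Sigma(4\mu)=4(\Sigma\mu)$ with $\Sigma\mu$ a lift of $\Sigma\nu'\in\pi_7(S^4)$ (which has order $4$) together with Lemma \ref{lemma Sigma Tpi5(Lm4)}. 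An alternative worth keeping in reserve is to realise $\tilde\nu'_2$ as a coextension attached to the nullhomotopy $2^2\iota_3\circ\nu'=0$ and to read off its order via Toda-bracket relations or from the metastable EHP sequence of $P^4(4)=\Sigma P^3(4)$; in any route the crux is this single order computation, not the formal homological input.
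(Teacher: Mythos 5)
Your setup---surjectivity of $p_{r\ast}$ on $\pi_6$, the observation that any lift of $\nu'$ has order a multiple of $4$, and the reduction to the single case $r=2$ via the comparison maps of (\ref{diagram M2s to M2r})---is sound and agrees with the paper. But the proposal stops exactly where the proof has to begin: you never establish that some lift $\mu\in\pi_6(P^3(4))$ of $\nu'$ satisfies $4\mu=0$. Both of your suggested routes are left as plans (``this should force the components of $\xi$\dots''), and the primary one does not typecheck: the maps $\bar\lambda$ and $\bar\chi$ of (\ref{diagram 1 M2m4 to Lm4}) and (\ref{diagrams 1 for Lm4 to CP2}) have domain $P^4(2^m)$ and $L^4_m$ respectively, not $P^3(4)$, so they do not receive $\mu$; the only class they could receive is $\Sigma\mu\in\pi_7(P^4(4))$, which lands in $\pi_7(L^4_2)$, a group the paper never computes (Lemmas \ref{lem:pi5(Lm4)} and \ref{lem:pi6(Lm4)} stop at $\pi_6$). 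What the paper actually does is your ``alternative in reserve,'' carried out concretely: it takes an order-$2$ lift $\tilde\eta_3\in\pi_4(P^3(4))$ of $\eta_3$, forms the Toda bracket $\{\tilde\eta_3,2\iota_4,\eta_4\}_1\subset\pi_6(P^3(4))$, uses $p_{2\ast}\{\tilde\eta_3,2\iota_4,\eta_4\}_1\subset\{\eta_3,2\iota_4,\eta_4\}_1=\{\nu',-\nu'\}$ (Toda $(5.4)$) to produce a lift $\tilde\nu'_2$ of $\pm\nu'$, and then bounds its order by the shuffle $\tilde\nu'_2\circ 2\iota_6\in\{\tilde\eta_3,2\iota_4,0\}_1\equiv 0 \bmod \tilde\eta_3\Sigma\pi_5(S^3)$, a subgroup of exponent $2$ since $\tilde\eta_3$ has order $2$. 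That single bracket manipulation is the entire content of the lemma and is absent from your proposal.

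A secondary but genuine error: to obtain your short exact sequence you claim $(\partial_r)_{6\ast}=0$ because $E\colon\pi_6(S^2)\to\pi_7(S^3)$ is onto and Lemma \ref{partial calculate1} determines $(\partial_r)_{6\ast}$ on its image. In fact $E$ is zero here: $H(\nu'\eta_6)=H(\nu')\eta_6=\eta_5^2\neq 0$, so the James--Hopf homomorphism $\pi_7(S^3)\to\pi_7(S^5)$ is injective rather than vanishing, and the image of $E$ is trivial. Lemma \ref{partial calculate1} therefore gives no information about $(\partial_r)_{6\ast}$ on the generator $\nu'\eta_6$; the paper instead deduces $(\partial_1)_{6\ast}=0$ from Wu's computation of $\pi_6(P^3(2))$ and propagates it by naturality. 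This mistake is not fatal to the lemma itself---an order-$4$ lift only requires $4\mu=0$, not left-exactness of the sequence---but the step as written is false and should be removed or replaced.
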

\begin{proof}
	By the (5.4) of \cite{Toda}, the Toda bracket $\{\eta_3,2\iota_4,\eta_4\}_1=\{\nu', -\nu'\}$. From the \cite{Mukai}, there is an element $\tilde{\eta}_3\in \pi_{4}(P^{3}(4))$ with order 2 such that $p_{2}(\tilde{\eta}_3)=\eta_3\in \pi_{4}(S^3) $. $\{\tilde{\eta}_3,2\iota_4,\eta_4\}_1\subset \pi_{6}(P^{3}(4))$ is well defined and $p_{2\ast}\{\tilde{\eta}_3,2\iota_4,\eta_4\}_1\subset \{p_{2}\tilde{\eta}_3,2\iota_4,\eta_4\}_1=\{\eta_3,2\iota_4,\eta_4\}_1=\{\nu', -\nu'\}$. Thus there is an element $\tilde{\nu}_2'\in \{\tilde{\eta}_3,2\iota_4,\eta_4\}_1$, such that $p_{2\ast}(\tilde{\nu}'_2)=\nu'$.
	
	Moreover, $\{\tilde{\eta}_3,2\iota_4,\eta_4\}_1(2\iota_6)\subset \{\tilde{\eta}_3,2\iota_4,\Sigma(\eta_32\iota_4)\}_1=\{\tilde{\eta}_3,2\iota_4,0\}_1\equiv 0 $ mod $\tilde{\eta}_3\Sigma\pi_{5}(S^3)$. Thus the order of $2\tilde{\nu}'_2=\tilde{\nu}_2'(2\iota_6)$ is not lager than 2, hence the order of $\tilde{\nu}'_2$ is 4. Now for $r\geq 3$, let $\tilde{\nu}'_r=\bar{\psi}_{r}^{2}\tilde{\nu}'_2\in \pi_6(P^{3}(2^r))$, where $\bar{\psi}_{r}^{2}$ comes from (\ref{diagram M2s to M2r}) for $s=2$. Then the order $\tilde{\nu}'_r$ is  4 and $p_{r\ast}(\tilde{\nu}'_r)=\nu'$.
\end{proof}

\begin{theorem}\label{lem:pi6(M2r)}
	$\pi_6(P^{3}(2^r))\cong
	\left\{
	\begin{array}{ll}
		\Z_2\oplus \Z_2\oplus \Z_2\oplus \Z_2\oplus \Z_2, & \hbox{$r=1$;} \\
		\Z_2\oplus\Z_2\oplus \Z_4\oplus \Z_4\oplus \Z_{2^{r}} , & \hbox{$r\geq 2$.}
	\end{array}
	\right.
	$
\end{theorem}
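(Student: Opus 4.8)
The plan is to run the three-step method of Section~\ref{sec: 3} on the homotopy fibre sequence $\Omega S^{3}\xrightarrow{\partial_{r}}F_{p_{r}}\xrightarrow{i_{p_r}}P^{3}(2^{r})\xrightarrow{p_{r}}S^{3}$. Its long exact sequence produces a short exact sequence $0\to Coker(\partial_{r})_{6\ast}\to\pi_{6}(P^{3}(2^{r}))\to Ker(\partial_{r})_{5\ast}\to 0$, in which $(\partial_{r})_{6\ast}\colon\pi_{7}(S^{3})\to\pi_{6}(F_{p_{r}})$ and $(\partial_{r})_{5\ast}\colon\pi_{6}(S^{3})\to\pi_{5}(F_{p_{r}})$ are the boundary maps. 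Since $\pi_{7}(S^{3})=\Z_{2}\{\nu'\eta_{6}\}$, $\pi_{6}(S^{3})=\Z_{4}\{\nu'\}$, and $\pi_{6}(F_{p_{r}})$ is already given by Lemma~\ref{lem:pi6(J3M2r)}, the whole argument reduces to identifying these two boundary maps and then resolving the extension.

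For $(\partial_{r})_{5\ast}$ I would reuse what is already established for Theorem~\ref{thmpi5(M3)}: the naturality square~(\ref{diagram M2s to M2r}) together with $(\partial_{1})_{5\ast}(\nu')=j_{p_1}\eta_{2}^{3}$ (from the proof of Theorem~1.1 of \cite{Mukai4}) yields, exactly as in~(\ref{partial5 on nu'}), that $(\partial_{r})_{5\ast}(\nu')=0$ for $r\ge 2$, so $Ker(\partial_{r})_{5\ast}=\Z_{4}$; while for $r=1$ the class $j_{p_1}\eta_{2}^{3}$ is nonzero of order $2$, so $Ker(\partial_{1})_{5\ast}=\langle 2\nu'\rangle\cong\Z_{2}$. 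For $(\partial_{r})_{6\ast}$ the key point is an adjunction identity in $\pi_{6}(\Omega S^{3})$: the class $\nu'\eta_{6}$ corresponds to $\widehat{\nu'}\circ\eta_{5}$, where $\widehat{\nu'}\in\pi_{5}(\Omega S^{3})$ is the adjoint of $\nu'$, whence $(\partial_{r})_{6\ast}(\nu'\eta_{6})=(\partial_{r})_{5\ast}(\nu')\circ\eta_{5}$. For $r\ge 2$ this is $0$ by the previous sentence; for $r=1$ it equals $j_{p_1}(\eta_{2}^{3}\eta_{5})=j_{p_1}(\eta_{2}^{4})$, and using the unstable relation $\eta_{3}^{3}=2\nu'$ one finds $\eta_{2}^{4}=2\,\eta_{2}\nu'$, whose image under $j_{p_1}$ vanishes because $\pi_{6}(F_{p_{1}})=\Z_{2}^{\oplus 4}$ has exponent $2$. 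Thus $(\partial_{r})_{6\ast}=0$ in all cases and $Coker(\partial_{r})_{6\ast}=\pi_{6}(F_{p_{r}})$.

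It remains to split the resulting extension $0\to\pi_{6}(F_{p_{r}})\to\pi_{6}(P^{3}(2^{r}))\to Ker(\partial_{r})_{5\ast}\to 0$. For $r\ge 2$ this is immediate from Lemma~\ref{lem:lift of nu'}: the lift $\tilde{\nu}_r'$ has order $4$ and $p_{r\ast}\tilde{\nu}_r'=\nu'$ generates $Ker(\partial_{r})_{5\ast}\cong\Z_{4}$, so $\langle\tilde{\nu}_r'\rangle$ is a $\Z_{4}$-summand complementary to $\pi_{6}(F_{p_{r}})$, and by Lemma~\ref{lem:pi6(J3M2r)} we get $\pi_{6}(P^{3}(2^{r}))\cong\pi_{6}(F_{p_{r}})\oplus\Z_{4}\cong\Z_{2}\oplus\Z_{2}\oplus\Z_{4}\oplus\Z_{4}\oplus\Z_{2^{r}}$. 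For $r=1$ one must instead exhibit an order-$2$ lift in $\pi_{6}(P^{3}(2))$ of the generator $2\nu'$ of $Ker(\partial_{1})_{5\ast}$; such a lift can be produced by a Toda-bracket argument of the same flavour as the proof of Lemma~\ref{lem:lift of nu'} (or read off from the analysis of $\pi_{6}(P^{3}(2))$ in \cite{Morisugi Mukai lift}), after which $\pi_{6}(P^{3}(2))\cong\pi_{6}(F_{p_{1}})\oplus\Z_{2}\cong\Z_{2}^{\oplus 5}$.

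The real obstacle, I expect, is the case $r=1$, on two counts. First, the evaluation of $(\partial_{1})_{6\ast}(\nu'\eta_{6})$ rests on the unstable relation $\eta_{3}^{3}=2\nu'$ --- provable via the EHP sequence together with the fact that $S^{3}$ is an $H$-space, so that $[\iota_{3},\iota_{3}]=0$ and $\Sigma\colon\pi_{6}(S^{3})\hookrightarrow\pi_{7}(S^{4})$ --- and on $\pi_{6}(F_{p_{1}})$ having exponent $2$. Second, since $\mathrm{Ext}(\Z_{2},\Z_{2}^{\oplus 4})\ne 0$, the extension $0\to\Z_{2}^{\oplus 4}\to\pi_{6}(P^{3}(2))\to\Z_{2}\to 0$ does not split automatically --- for instance $\Z_{2}^{\oplus 3}\oplus\Z_{4}$ has the same order and fits the same sequence --- so the order-$2$ lift of $2\nu'$ genuinely has to be constructed. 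By contrast, once Lemmas~\ref{lem:pi6(J3M2r)} and~\ref{lem:lift of nu'} and~(\ref{partial5 on nu'}) are in hand, the case $r\ge 2$ is essentially formal.
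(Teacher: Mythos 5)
Your proposal is correct in outline and reaches the stated groups, but it takes a partly different route from the paper, and one step is left unfinished. The paper does not evaluate $(\partial_r)_{6\ast}$ directly: it quotes Theorem 6.36 of \cite{WJ Proj plane} for $\pi_6(P^{3}(2))\cong\Z_2^{\oplus 5}$, compares orders in the exact sequence to conclude $(\partial_1)_{6\ast}=0$, and then obtains $(\partial_r)_{6\ast}=\psi^1_{r\ast}(\partial_1)_{6\ast}=0$ from the naturality diagram (\ref{diagram M2s to M2r}); in particular the $r=1$ case of the theorem is essentially imported from Wu rather than re-proved. Your direct evaluation $(\partial_r)_{6\ast}(\nu'\eta_6)=(\partial_r)_{5\ast}(\nu')\circ\eta_5$ via the loop--suspension adjunction, combined with $\eta_2^{4}=\eta_2\circ\eta_3^{3}=\eta_2\circ(2\nu')=2\eta_2\nu'$ and the fact that $\pi_6(F_{p_1})\cong\Z_2^{\oplus4}$ has exponent $2$ (Lemma \ref{lem:pi6(J3M2r)}), is a legitimate and more self-contained way to get $(\partial_r)_{6\ast}=0$ for all $r$; it removes the dependence on Wu's computation at that step. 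The identification of $Ker(\partial_r)_{5\ast}$ and the $r\geq 2$ splitting via the order-$4$ lift $\tilde{\nu}'_r$ of Lemma \ref{lem:lift of nu'} coincide with the paper's argument.

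The one genuine incompleteness is the splitting for $r=1$. You rightly note that $\mathrm{Ext}(\Z_2,\Z_2^{\oplus4})\neq 0$, so an order-$2$ lift of $2\nu'$ must actually be exhibited, but you only gesture at ``a Toda-bracket argument of the same flavour'' or a citation. The lift is in fact already at hand from the paper's proof of Theorem \ref{thmpi5(M3)}: there $\xi_1\in\pi_5(P^{3}(2))$ is an order-$2$ lift of the generator $\eta_3^{2}$ of $\pi_5(S^3)$, and the composite $\xi_1\eta_5$ satisfies $p_{1\ast}(\xi_1\eta_5)=\eta_3^{2}\eta_5=\eta_3^{3}=2\nu'$ and $2(\xi_1\eta_5)=(2\xi_1)\eta_5=0$, since precomposition with the suspension $\eta_5$ is a homomorphism. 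Supplying that one line closes your argument; without it (or without falling back on \cite{WJ Proj plane} or \cite{Morisugi Mukai lift}, which is in effect what the paper does), the case $r=1$ is not finished.
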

From (\ref{diagram M2s to M2r}), (\ref{partial5 on nu'}), for $r\geq 2$, we get the following commutative diagram of exact sequences
\begin{align}
	\small{\xymatrix{
			\pi_7( S^3) \ar@{=}[d]\ar[r]^-{(\partial_1)_{6\ast} }&\pi_6(F_{p_{1}})\ar[d]_{\psi_{r\ast}^1}\ar[r]^{i_{p_1\ast}} & \pi_6(P^{3}(2) )\ar[d]_{\bar{\psi}_{r\ast}^1}\ar[r]^{p_{1\ast}}& \Z_2\{2\nu'\}\ar@{^{(}->}[d]\ar[r]^-{(\partial_1)_{5\ast} }&0\\
			\pi_{7}( S^3 )\ar[r]^-{(\partial_r)_{6\ast} }&\pi_6(F_{p_{r}})\ar[r]^{i_{p_r\ast}}& \pi_6(P^{3}(2^r)) \ar[r]^{p_{r\ast}}& \pi_{6}(S^3) \ar[r]^-{(\partial_r)_{5\ast} }&0
	} } \label{diam p5 M1 to Mr}
\end{align}
Since $\pi_6(F_{p_{1}})\cong \Z_2\oplus\Z_2\oplus\Z_2\oplus\Z_2$,  from Theorem 6.36 of \cite{WJ Proj plane} we get $\pi_6(P^{3}(2))\cong \Z_2\oplus\Z_2\oplus\Z_2\oplus\Z_2\oplus\Z_2$, which implies $(\partial_1)_{6\ast} =0$. So $(\partial_r)_{6\ast} =\psi_{r\ast}^1(\partial_1)_{6\ast} =0$. Thus we have short exact sequence $0\rightarrow\pi_6(F_{p_{r}})\xrightarrow{i_{p_r\ast}} \pi_6(P^{3}(2^r)) \xrightarrow{p_{r\ast}} \pi_{6}(S^3)\xrightarrow{(\partial_r)_{4\ast} }0$. Now this theorem is obtained from Lemma \ref{lem:pi6(J3M2r)} and \ref{lem:lift of nu'}.

\noindent
{\bf Acknowledgement.} The authors would like to thank the reviewer(s)
for  pointing out some mistakes in writing. They also want to  thank Professor Jianzhong Pan  for helpful discussions on the proof of Lemma 3.8. The first author was partially supported by National Natural Science Foundation of China (Grant No. 11701430).

\bibliographystyle{amsplain}

\end{document}